\documentclass[12pt]{amsart}

\usepackage{amsmath, amsthm, amssymb, bbm, enumerate, verbatim, graphicx}

\allowdisplaybreaks[2]

\usepackage[text={6in,9in},centering]{geometry}
\usepackage[small]{caption}
\usepackage{appendix}

\newcommand{\R}{\mathbb{R}}
\newcommand{\ts}{\hskip+0.5mm}
\newcommand{\N}{\mathbb{N}}
\newcommand{\Z}{\mathbb{Z}}

\newcommand{\SP}{\mathcal{P}}

\newcommand{\BH}{{\bf H}^{\c}_{\lambda}}
\newcommand{\BJ}{{\bf J}}
\newcommand{\BHlim}{{\bf H}^{\vec{c}_\infty}_{\lambda}}
\newcommand{\SH}{\mathcal{H}^{\c}_{\lambda}}
\newcommand{\SJ}{\mathcal{J}}
\newcommand{\SHlim}{\mathcal{H}^{\vec{c}_\infty}_{\lambda}}
\renewcommand{\S}{\mathbb{S}}
\newcommand{\IP}[2]{\left<#1,#2\right>}
\newcommand{\vn}[1]{\lVert#1\rVert}
\newcommand{\rd}[2]{\frac{d#1}{d#2}}
\newcommand{\pd}[2]{\frac{\partial#1}{\partial#2}}

\renewcommand{\c}{\vec{c}_0}
\renewcommand{\k}{\vec{\kappa}}
\newcommand{\ca}{\mathbbm{c}}
\newcommand{\bs}{\star}
\newcommand{\cz}{{\vec c}}
\newcommand{\fg}{{\hat f}}
\newcommand{\df}{\text{\rm d}f\big|_\gamma}

\newcommand{\dkf}[1]{\text{\rm d}^{#1}f\big|_\gamma}
\renewcommand{\L}{\mathbbm{L}}
\renewcommand{\P}{\text{\bf P}}
\newcommand{\PX}{P(\mathcal{X})}
\newcommand{\M}{\vec{M}}
\newcommand{\dca}{\text{\rm d}\mathbbm{c}\big|_{\gamma}}
\setlength{\marginparwidth}{1.2in}
\let\oldmarginpar\marginpar
\renewcommand\marginpar[1]{\-\oldmarginpar[\raggedleft\footnotesize #1]%
{\raggedright\footnotesize #1}}

\newtheorem{thm}{Theorem}[section]
\newtheorem*{thm*}{Theorem}

\newtheorem{lem}[thm]{Lemma}

\theoremstyle{definition}

\newtheorem{ass}[thm]{Assumption}
\newtheorem{ex}[thm]{Example}
\begin{document}

\title[Global analysis of the generalised Helfrich flow]{Global analysis of the generalised Helfrich flow
of closed curves immersed in $\bf\R^n$}
\author{Glen Wheeler}

\thanks{Financial support from the Alexander-von-Humboldt Stiftung is gratefully acknowledged}
\address{Otto-von-Guericke-Universit\"at\\
Postfach 4120\\
D-39016 Magdeburg}
\email{wheeler@ovgu.de}
\subjclass[2000]{53C44 \and 58J35} 

\begin{abstract}
In this paper we consider the evolution of regular closed elastic curves $\gamma$
immersed in $\R^n$.
Equipping the ambient Euclidean space with a vector field $\ca:\R^n\rightarrow\R^n$ and a function
$f:\R^n\rightarrow\R$, we assume the energy of $\gamma$ is smallest when the curvature $\k$ of $\gamma$ is
parallel to $\c = (\ca\circ\gamma) + (f\circ\gamma)\tau$, where $\tau$ is the unit vector field spanning the
tangent bundle of $\gamma$.
This leads us to consider a generalisation of the Helfrich functional $\SH$, defined as the sum of the
integral of $|\k-\c|^2$ and $\lambda$-weighted length.
We primarily consider the case where $f:\R^n\rightarrow\R$ is uniformly bounded in $C^\infty(\R^n)$ and
$\ca:\R^n\rightarrow\R^n$ is an affine transformation.
Our first theorem is that the steepest descent $L^2$-gradient flow of $\SH$ with smooth initial data exists for
all time and subconverges to a smooth solution of the Euler-Lagrange equation for a limiting functional
$\SHlim$.
We additionally perform some asymptotic analysis.
In the broad class of gradient flows for which we obtain global existence and subconvergence, there exist many
examples for which full convergence of the flow does not hold.
This may manifest in its simplest form as solutions translating or spiralling off to infinity.
We prove that if either $\ca$ and $f$ are constant, the derivative of $\ca$ is invertible and
non-vanishing, or $(f,\gamma_0)$ satisfy a `properness' condition, then one obtains full convergence
of the flow and uniqueness of the limit.
This last result strengthens a well-known theorem of Kuwert, Sch\"atzle and Dziuk on the elastic
flow of closed curves in $\R^n$ where $f$ is constant and $\ca$ vanishes.
\end{abstract}

\maketitle

\section{Introduction}

%
Consider a closed plane curve immersed via a smooth immersion $\gamma:\S^1\rightarrow\R^2$.
Let us parametrise $\gamma$ by arc-length $s(u) = \int_0^u |\partial_u\gamma|\,du$.
The Helfrich energy, typically used to measure the free energy of a biomembrane \cite{H73}, is given for a
curve by
\begin{equation}
\mathcal{H}^{c_0}_{\lambda}(\gamma)
       = \frac{1}{2}\int_{\gamma} (\kappa-c_0)^2ds + \lambda L(\gamma)\,,
\label{EQhelfrichplanecurve}
\end{equation}
where $\kappa = \IP{\partial_s^2\gamma}{\nu}$ is the curvature of $\gamma$, $\nu$ a unit normal vectorfield on
$\gamma$, $L(\gamma)$ denotes the length of $\gamma$, $\lambda > 0$ is a constant and $c_0:\S^1\rightarrow\R$
is the spontaneous curvature.
If $c_0$ is a constant, then \eqref{EQhelfrichplanecurve} reduces to
\begin{equation}
\mathcal{H}^{c_0}_{\lambda}(\gamma)
       = \frac{1}{2}\int_{\gamma} |\kappa|^2ds + \Big(\lambda+\frac12c_0^2\Big)L(\gamma) + 2c_0\omega\pi\,,
\label{EQhelfrichplanecurve2}
\end{equation}
where $\omega$ is the winding number of $\gamma$.
For curves $\gamma:\S^1\rightarrow\R^n$ with high codimension, the appropriate generalisation of
\eqref{EQhelfrichplanecurve2} is
\begin{equation}
\mathcal{H}^{c_0}_{\lambda}(\gamma)
       = \frac{1}{2}\int_{\gamma} |\k|^2ds + \Big(\lambda+\frac12c_0^2\Big)L(\gamma)\,,
\label{EQhelfrichsimplified}
\end{equation}
where $\k = \partial_s^2\gamma$ is the curvature of $\gamma$.
This functional (in the context of the elastic energy of closed curves) was studied by
Dziuk-Kuwert-Sch\"atzle \cite{DKS02}.  There it was proved:

\begin{thm}[{\cite[Theorem 3.2]{DKS02}}]
For $\lambda, c_0$ constants such that $\lambda + \frac12c_0^2 > 0$ and smooth initial data
$\gamma_0:\S^1\rightarrow\R^n$, the $L^2$-gradient flow $\gamma:\S^1\times[0,T)\rightarrow\R^n$ for
$\mathcal{H}^{c_0}_{\lambda}$ exists for all time.
Furthermore, there exists a sequence of points $p_j\in\R^n$ and times $t_j$ such that the curves
$\gamma(\cdot,t_j) - p_j$ converge, when reparametrised by arc-length, to a smooth solution of the
Euler-Lagrange equation for $\mathcal{H}^{c_0}_{\lambda}$.
\label{TMdks}
\end{thm}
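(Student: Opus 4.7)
My plan is the standard three-step analysis of a fourth-order geometric gradient flow: short-time existence, global-in-time a priori estimates, and extraction of a subsequential limit modulo translation.

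First, a direct computation shows that the normal $L^2$-gradient of $\mathcal{H}^{c_0}_{\lambda}$ has the form $\nabla_s^2\k + \tfrac12|\k|^2\k - (\lambda + \tfrac12 c_0^2)\k$, where $\nabla_s$ denotes the normal covariant derivative along $\gamma$. The gradient flow $\partial_t\gamma = -\nabla\mathcal{H}^{c_0}_{\lambda}$ is thus a quasilinear parabolic system for $\gamma$ with principal part $-\partial_s^4\gamma$, and short-time existence on a maximal interval $[0,T)$ follows from standard parabolic theory after fixing a suitable tangential gauge (e.g.\ DeTurck's trick, or simply observing that tangential velocities amount to reparametrisations and may be prescribed freely). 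It therefore suffices to study the purely normal flow.

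Second, the energy identity $\tfrac{d}{dt}\mathcal{H}^{c_0}_{\lambda} = -\vn{\partial_t\gamma}_{L^2}^2 \le 0$ combined with the assumption $\lambda + \tfrac12 c_0^2 > 0$ bounds $L(\gamma(\cdot,t))$ from above and $\int|\k|^2\,ds$ uniformly in time. The central technical step is an inductive proof that $\int|\nabla_s^m\k|^2\,ds$ is bounded uniformly in $t$ for every $m\ge 0$: one differentiates under the integral sign, substitutes the evolution equation, integrates by parts in arc-length, and applies Gagliardo--Nirenberg interpolation on closed curves to absorb the polynomial nonlinearities into the dominant $\int|\nabla_s^{m+2}\k|^2\,ds$ term, which enters with the favourable sign coming from the bilaplacian. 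A separate argument rules out length collapse $L(\gamma(\cdot,t))\to 0$: a standard Fenchel-type lower bound $L(\gamma)\cdot\int|\k|^2\,ds \ge C > 0$ for closed immersed curves, combined with the energy bound, prevents length degeneration. A standard continuation argument then yields $T = \infty$.

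Third, integrating the energy identity gives $\int_0^\infty\vn{\partial_t\gamma}_{L^2}^2\,dt < \infty$, so there is a sequence $t_j\to\infty$ along which $\vn{\nabla\mathcal{H}^{c_0}_{\lambda}(\gamma(\cdot,t_j))}_{L^2}\to 0$. Since the flow may translate the curves off to infinity, I translate by $p_j\in\R^n$ (for instance the image of a fixed basepoint) so that $\gamma(\cdot,t_j) - p_j$ remains in a fixed ball. After reparametrising by arc-length and invoking the uniform estimates from step two together with the length lower bound, Arzel\`a--Ascoli yields a smooth subsequential limit $\gamma_\infty$, which is forced to be a critical point of $\mathcal{H}^{c_0}_{\lambda}$ by the $L^2$-smallness of the gradient along $t_j$.

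The principal obstacle is the inductive higher-derivative estimate in step two: controlling the polynomial nonlinearities generated by evolving $|\nabla_s^m\k|^2$ against the fourth-order positive definite term requires careful interpolation on curves of bounded length, and the induction must be set up so that no bad constant blows up with $m$. Ruling out length degeneration and extracting the translated subsequential limit are conceptually straightforward by comparison.
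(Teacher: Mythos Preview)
The paper does not itself prove this theorem --- it is quoted verbatim from \cite[Theorem 3.2]{DKS02} as background --- but your sketch is correct and is precisely the strategy used in \cite{DKS02} (and reproduced, in generalised form, in the paper's own proof of Theorem~\ref{TMmt}: see Lemmas~\ref{LMaprioriL2controlofK}--\ref{LMaprioricontrolofL} for the basic energy and length bounds, Lemmas~\ref{LMaprioriL2controlofNablaK}--\ref{LMhigherorderbounds} for the inductive interpolation estimates on $\vn{\nabla_s^m\k}_{L^2}^2$, and the proof of Theorem~\ref{TMmt} for the continuation argument and extraction of the translated subsequential limit). Your Fenchel-type length lower bound $L(\gamma)\cdot\int|\k|^2\,ds \ge C$ is equivalent, via Cauchy--Schwarz, to the Poincar\'e argument the paper uses in Lemma~\ref{LMaprioricontrolofL}.
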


One is naturally led to wonder if a global result similar to Theorem \ref{TMdks} holds for a high-codimension
generalisation of \eqref{EQhelfrichplanecurve} without requiring $c_0$ to be constant, and additionally if the
sequence of translations ${p_j}\in\R^n$ are necessary to demonstrate a limit.
Indeed, the `subconvergence modulo translation' result above does not rule out non-uniqueness of the
limit of the flow or the possibility that the flow floats off to infinity.
Our main results Theorem \ref{TMmt} and Theorem \ref{TMmt2} address each of these issues respectively.

As the curvature of a curve with high codimension is a vector, in order to make sense of the difference
$|\kappa - c_0|$ we replace the spontaneous curvature \emph{function} $c_0$ with a spontaneous curvature
\emph{vector field} $\c$.
In so doing we obtain the \emph{generalised Helfrich functional}:
\begin{equation}
\label{EQhelfrichfunctional}
\SH(\gamma) = \frac12 \int_\gamma |\k - \c|^2ds + \lambda L(\gamma)\,.
\end{equation}
Minimisers of $\SH$ are curves whose curvature vector $\k$ is as parallel as possible to the
spontaneous curvature vector field $\c$.
For this paper, we restrict our attention to spontaneous curvature vector fields $\c$ which are induced by an
ambient vector field $\ca:\R^n\rightarrow\R^n$ and function $f:\R^n\rightarrow\R$ via the immersion $\gamma$;
that is,
\[
\c = \cz + \fg\tau
   = (\ca \circ \gamma) + (f\circ\gamma)\tau,
\]
where $\cz := \ca\circ\gamma$, $\fg := f\circ\gamma$ and $\tau = \partial_s\gamma$ is the unit tangent vector
along $\gamma$.
In particular, we shall not require that $\c$ is a constant vector field.
Note that for $\ca \equiv 0$ and $f(x) = c_0$ we have
\begin{align*}
\mathcal{H}_\lambda^{c_0\tau}(\gamma)
 &= \frac12\int_\gamma \big|\k-c_0\tau\,\big|^2 ds + \lambda L(\gamma)
\\
 &= \frac12\int_\gamma |\k|^2 ds
     + \frac12\int_\gamma c_0^2\, ds
     + \int_\gamma c_0\IP{\k}{\tau}\, ds
     + \lambda L(\gamma)
\\
 &= \frac12\int_\gamma |\k|^2ds
     + \Big(\lambda + \frac12c_0^2\Big) L(\gamma)\,,
\end{align*}
which is \eqref{EQhelfrichplanecurve2} up to an additive constant.
In this paper we are primarily interested in studying the steepest descent $L^2$-gradient flow for $\SH$,
referred to hereafter as the \emph{generalised Helfrich flow}.
This is the one parameter family of immersed curves $\gamma:\S^1\times[0,T)\rightarrow\R^n$ satisfying
\begin{equation}
\label{EQflow}
\pd{}{t}\gamma = -\BH(\gamma)\,,\quad \text{ and }\,\quad 
\rd{}{t}\SH(\gamma) = - \int_\gamma \big| \BH(\gamma) \big|^2ds\,,
\end{equation}
where $\BH(\gamma)$ is the Euler-Lagrange operator for $\SH(\gamma)$ in $L^2$.
Intuitively, the generalised Helfrich flow with initial data $\gamma_0$ is smoothly deforming
$\gamma_0$ in such a way that the curvature vector becomes aligned with $\c$ in as efficient a
manner as possible.
By taking the first variation of $\SH(\gamma)$ in an arbitrary (i.e. not necessarily purely normal)
direction, we prove in Lemma \ref{LMfirstvariation} that $\BH$ is given by
\begin{align*}
\BH(\gamma)
&=
    \nabla^2_s\k
  - \IP{\c}{\tau}\nabla_s\k
  - \nabla_s^2\cz
  + \frac12|\c-\k|^2\k
  - \big(\lambda+|\c|^2\big)\k
  - \IP{\dca(\tau)}{\tau}\k
\\&\quad
  - \Big[\big(\dca\big)^T(\k-\c)
   + \df(\tau)\,\cz
   + \fg\dca(\tau)
   - \IP{\c}{\tau}\big(\df\big)^T \Big]^\bot
\\&\quad
  - \df(\tau)\,\k
  + \fg\IP{\cz}{\tau}\k\,.
\end{align*}
In the above we have used $\nabla_s$ to denote the normal derivative (the projection of $\partial_s$ onto the
normal bundle) and $\df$, $\dca$ to denote the derivative of $f$ and $\ca$ evaluated at $\gamma$.
The standard Euclidean inner product is denoted by $\IP{\cdot}{\cdot}$.
The symbols $(\cdot)^T$ and $(\cdot)^\perp$ denote transposition and normal projection respectively, so that
$\Big[\big(\dca\big)^T(\k-\c)\Big]^\bot$ and $\Big[\big(\df\big)^T\Big]^\bot$ are vectors.
We refer the reader to Section 2 for further exposition on our notation.

As one easily verifies, the Euler-Lagrange operator $\BH$ applied to $\gamma$ yields a highly non-linear
fourth order parabolic system of tightly coupled differential equations.
With the notable exception of \cite{DKS02}, earlier works on non-linear fourth order flows of curves, such as
the curve diffusion flow (see \cite{EG97,P96,W12}), the elastic flow or curve straightening flow (see
\cite{DKS02,K96,LS87,LS85,P96,W93,W95}), and the affine curve lengthening flow (see \cite{A99}), have been
typically carried out in in the context of plane curves.
There the normal bundle of $\gamma$ is trivial and one studies a single highly non-linear fourth order
equation.
In our situation however, the Euler-Lagrange operator $\BH$ is a vector in the normal bundle, which may have
quite complicated geometry.
This, and the presence of a non-constant spontaneous curvature $\c$, complicates much of the analysis.

Closed particular solutions of $\BH(\gamma) = 0$ are somewhat difficult to grasp in any generality.
Understanding the case of simple circles $S_\rho:\S^1 \rightarrow \R^n$, defined for some choice of
coordinates by $S_\rho(s) = \rho(\cos \frac{s}{\rho}, \sin\frac{s}{\rho}, 0, \ldots, 0)$, is already quite
complicated.
Some elementary computations yield the following.

\begin{lem}
Let $S_\rho:\S^1\rightarrow\R^n$ be a simple circle with radius $\rho$.
Suppose $f:\R^n\rightarrow\R$ is a constant and $\lambda > 0$.
If $\ca$ is a translation, then either
\begin{enumerate}
\item[(i)] $\ca$ is zero and $S_\rho$ is critical for $\SH$ if and only if 
$\frac1\rho = \sqrt{2\lambda + |f|^2}$; or
\item[(ii)] $\ca$ is non-zero and $S_\rho$ is never critical for $\SH$.
\end{enumerate}
If $\ca$ is a rotation through an angle $\theta$ satisfying $\ca(S_\rho(\S^1)) = S_\rho(\S^1)$, then either
\begin{enumerate}
\item[(iii)] $\ca(S_\rho(s)) = S_\rho(s)$ for every $s\in\S^1$ and $S_\rho$ is critical for $\SH$ if
and only if 
$\rho = \sqrt{\frac{\sqrt{(2 + 2\lambda + |f|^2)^2 +12}-(2 + 2\lambda + |f|^2)}{6}}$
; or
\item[(iv)] $\ca$ is a rotation of the plane spanned by the axes of $S_\rho$, and $S_\rho$ is critical only if $\rho$
is a real positive root of the polynomial
\[Q(\rho) = \rho^{-4} - (
     2\cos\theta
   + 2\lambda
   + |f|^2
    )\rho^{-2}
  - (4f\sin\theta)\rho^{-1}
  - 3\,.
\]
\end{enumerate}
\label{LMcirclesclassification}
\end{lem}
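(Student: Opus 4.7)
The plan is to substitute $S_\rho(s)=\rho(\cos(s/\rho),\sin(s/\rho),0,\dots,0)$ directly into $\BH(\gamma)=0$ and simplify case by case. The circle's geometry is very rigid: we have $\k=-S_\rho/\rho^2$, $|\k|=1/\rho$, and $\partial_s\k=-\tau/\rho^2$ is purely tangential, so $\nabla_s\k\equiv 0$ and $\nabla_s^2\k\equiv 0$. The constancy of $f$ additionally kills every term involving $\df$ and forces $\fg=f$. In each of the four cases, the remaining evaluation reduces to computing a handful of inner products of the form $\IP{\cz}{\tau}$, $\IP{\cz}{\k}$ and $|\cz|^2$, together with the normal projection $[(\dca)^T(\k-\c)+f\dca(\tau)]^\perp$; once this is done $\BH$ becomes a scalar multiple of $\k$, and the criticality condition reduces to a single scalar equation in $\rho$.

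I would treat cases (i) and (ii) first. When $\ca$ is a translation one has $\dca\equiv 0$ and $\nabla_s^2\cz=0$, so the bracket term vanishes. Setting $\ca\equiv 0$ yields $\tfrac1{\rho^2}=2\lambda+f^2$, giving (i); equivalently one may just differentiate $\SH(S_\rho)=\pi/\rho+(2\lambda+f^2)\pi\rho$ in $\rho$. For (ii) the same reduction produces a scalar of the form $\tfrac1{2\rho^2}-\lambda-\tfrac12(|v|^2+f^2)-\IP{v}{\k}$, whose last summand is non-constant in $s$ whenever the translation vector $v$ has a component in the plane of $S_\rho$; the remaining possibility (namely $v$ orthogonal to that plane) is excluded by observing that $\BH$ then acquires a non-zero vector component along $v$ that cannot cancel.

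For (iii) and (iv) I would exploit the hypothesis $\ca(S_\rho(\S^1))=S_\rho(\S^1)$. In case (iii), $\ca$ is the identity on the plane of $S_\rho$, so $\cz=\gamma$, and direct computation gives $\IP{\cz}{\tau}=0$, $\IP{\cz}{\k}=-1$, $|\cz|^2=\rho^2$, $\IP{\dca(\tau)}{\tau}=1$, and bracket $=\k-\cz$; combining these with the substitution $\cz=-\rho^2\k$ collapses $\BH=0$ to the biquadratic $3\rho^4+(2+2\lambda+f^2)\rho^2-1=0$, whose unique positive root is the stated radius. In case (iv), $R_\theta$ acts non-trivially on the plane; using the identities $R_{-\theta}\gamma=\cos\theta\,\gamma-\rho\sin\theta\,\tau$ and $R_\theta\tau=\cos\theta\,\tau+\sin\theta\,\nu$ one computes $\IP{\cz}{\tau}=\rho\sin\theta$, $\IP{\cz}{\k}=-\cos\theta$, $\IP{\dca(\tau)}{\tau}=\cos\theta$, and the bracket again reduces to a scalar multiple of $\k$. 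Collecting everything yields precisely the polynomial condition $Q(\rho)=0$ of the statement.

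The main obstacle will be the bracket term in case (iv). Decomposing everything in the orthonormal plane frame $\{\tau,\nu\}$ is essential for making the cancellations transparent: once $R_{\pm\theta}\gamma$ and $R_{\pm\theta}\tau$ are written as explicit combinations of $\gamma$ and $\tau$, the tangential parts drop out of $(\cdot)^\perp$ in a controlled way, and the identity $\gamma=-\rho^2\k$ reduces the entire Euler--Lagrange equation to a single scalar polynomial in $\rho^{-1}$. The rest is bookkeeping and comparison with the $Q$ given in the statement.
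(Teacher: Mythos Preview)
Your overall strategy --- substitute $S_\rho$ into $\BH$, use $\nabla_s\k\equiv 0$, and collapse everything to a scalar condition on $\rho$ --- is exactly the paper's approach, and your handling of (i), (iii) and (iv) mirrors the paper's computation (the paper projects onto $\k$ rather than decomposing in the frame $\{\tau,\nu\}$, but the algebra is the same and leads to the same polynomials).

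There is a genuine gap in your treatment of (ii), namely the subcase where the translation vector $v=\M$ is orthogonal to the plane of $S_\rho$. You assert that ``$\BH$ then acquires a non-zero vector component along $v$ that cannot cancel'', but this is false: with $\dca=0$ and $\df=0$ the bracket term $[\cdots]^\perp$ vanishes identically and one is left with
\[
\BH(S_\rho)=\Big(\tfrac12|\k|^2-\IP{\M}{\k}-\lambda-\tfrac12|\M|^2-\tfrac12 f^2\Big)\k,
\]
which lies entirely in the plane of the circle and has no component in the direction of $v$. When $\M$ is orthogonal to that plane one has $\IP{\M}{\k}\equiv 0$, and the scalar factor actually vanishes for $\rho=(2\lambda+|\M|^2+f^2)^{-1/2}$, so your proposed exclusion mechanism does not work. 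The paper's own proof glosses over this same point (it simply asserts ``which vanishes only if $\M=0$'' without further argument), so the difficulty is intrinsic to the statement in codimension at least two rather than a defect peculiar to your outline; but your claimed justification for this subcase is incorrect as written.
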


The proof of Lemma \ref{LMcirclesclassification} is presented in the appendix.

Let us briefly discuss two aspects of Lemma \ref{LMcirclesclassification}.
Statement $(ii)$ gives the non-existence of circular critical curves when $\ca$ is a translation and
$f$ is a constant.
Indeed, classifying all closed solutions of the Euler-Lagrange equation for $\SH$ appears to be difficult even
in this very special case.
Their existence (and smoothness), however, is a corollary of Theorem \ref{TMmt}.

We find it curious that the angle of rotation plays a relatively important role in the size of a critical
circle, as one readily observes from statement $(iv)$.
Unfortunately, it is quite clumsy to write down the roots of $Q$ explicitly, although one
should note that in the case where the rotation is trivial ($\theta = 2k\pi$ for $k\in\Z$) the only root is
given by the expression in $(iii)$.
In order to demonstrate the impact of the rotation angle on the radius of the critical circle, we have 
in Figure \ref{FG1} set $\lambda=f=1$ (so that one obtains only a single allowable radius for each angle of
rotation) and graphed the resultant radii against each angle in Figure \ref{FG1}.

\begin{figure}[t]
\centering
\includegraphics[scale=1]{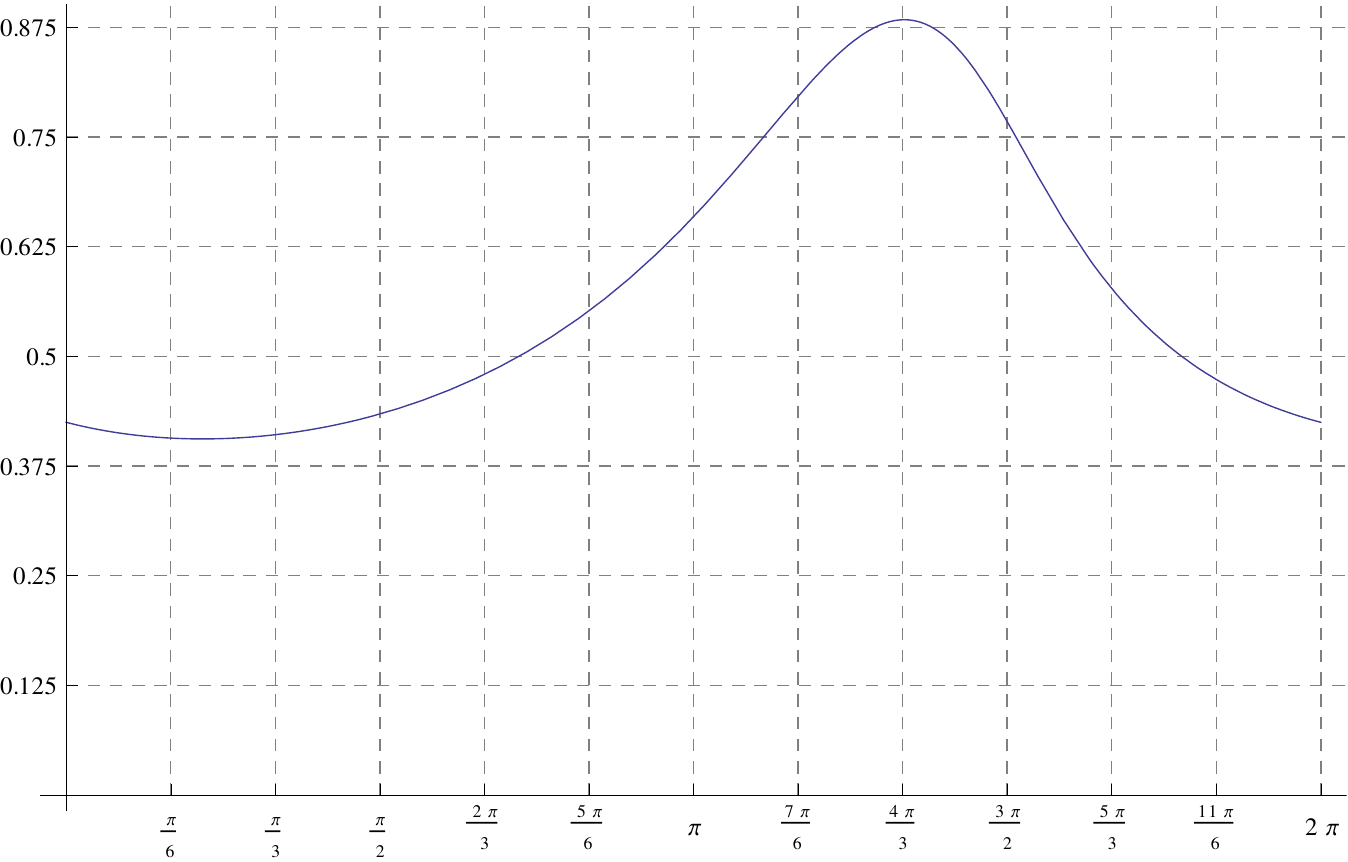}
\caption{
Graph of the allowable radii $\rho$ (vertical) of the critical circle $S_\rho$ against the angle $\theta$
(horizontal) of the rotation $\ca$ in $(iv)$ of Lemma \ref{LMcirclesclassification} when $f=\lambda=1$.
}
\label{FG1}
\end{figure} 

Local existence for the flow \eqref{EQflow} is a standard matter; one may follow the approach of \cite{DKS02},
noting that the principal part of $\BH$ is quasilinear and applying standard theory.
Other possible approaches include generalising \cite{MM11} to high codimension through the estimates of
\cite{L01}, or recasting the problem in a form such that the general existence theory of
\cite{B11} applies.

\begin{thm}[Local existence]
\label{STE}
Suppose $\gamma_0:\S^1\rightarrow\R^n$ is a smooth regular curve and $\c = (\ca\circ\gamma) + (f\circ\gamma)\tau$
with $\ca\in C^\infty(\R^n,\R^n)$, $f\in C^\infty(\R^n)$. 
Then there exists a $T\in(0,\infty]$ and a unique one-parameter family of immersions
$\gamma:\S^1\times[0,T)\rightarrow\R^n$ such that
\begin{enumerate}
\item[(i)]
$\gamma(0,\cdot) = \gamma_0$;
\item[(ii)]
$\partial_t\gamma = -\BH(\gamma)$;
\item[(iii)]
$\gamma(\cdot,t)$ is of class $C^\infty$ and periodic of period $L(\gamma(\cdot,t))$ for every $t\in(0,T)$;
\item[(iv)]
$T$ is maximal.
\end{enumerate}
\end{thm}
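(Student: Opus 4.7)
The plan is to follow the approach of \cite{DKS02}, with the additional care needed to handle the non-constant and high-codimensional spontaneous curvature. From the formula for $\BH(\gamma)$ one reads off that the principal part is $\nabla_s^2\k$, so in a local parametrisation $u\mapsto\gamma(u,\cdot)$ one has
\[
-\BH(\gamma)=-|\partial_u\gamma|^{-4}\bigl(\partial_u^4\gamma\bigr)^{\perp}+R\bigl(u,\gamma,\partial_u\gamma,\partial_u^2\gamma,\partial_u^3\gamma\bigr),
\]
where $R$ depends smoothly on its arguments provided $\ca\in C^\infty(\R^n,\R^n)$ and $f\in C^\infty(\R^n)$, since the only non-polynomial dependence on $\gamma$ enters through $\ca$, $f$, and their derivatives. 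Thus \eqref{EQflow} is a quasilinear fourth-order system which, however, is degenerate along the tangent direction because $\SH$ is invariant under reparametrisations.

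To obtain a strictly parabolic system I would apply a DeTurck-type trick. Working in a tubular neighbourhood of $\gamma_0$, pick a smooth orthonormal frame $\{N_j\}_{j=1}^{n-1}$ for the normal bundle of $\gamma_0$ and write nearby immersions as
\[
\gamma(u,t)=\gamma_0(u)+\sum_{j=1}^{n-1}\phi_j(u,t)\,N_j(u).
\]
I would then add a tangential term $V(\gamma)\,\partial_s\gamma$ to $-\BH(\gamma)$, chosen so that the resulting system for $\phi=(\phi_j)$ has principal symbol $|\partial_u\gamma_0|^{-4}\partial_u^4\,\mathrm{Id}$ modulo lower order. Classical parabolic theory on $\S^1$ (for instance Solonnikov's theory, or the abstract quasilinear framework of \cite{B11}) then produces a unique solution $\phi\in C^{4+\alpha,1+\alpha/4}(\S^1\times[0,T_0])$ for some small $T_0>0$. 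Composing with the flow of diffeomorphisms of $\S^1$ generated by the added tangential field, and reparametrising by arc length, yields a one-parameter family of immersions satisfying conditions (i)--(iii). Uniqueness in this class follows from the standard energy comparison for quasilinear parabolic systems; smoothness for $t\in(0,T_0)$ follows by bootstrapping the Schauder estimates and iterating, exploiting the smoothness of $\ca$ and $f$. Condition (iv) is then obtained by letting $T$ be the supremum of all times for which such a smooth immersed solution exists on $[0,T)$.

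The main obstacle lies in the high-codimension, vector-valued character of $\BH$: the principal term $(\partial_u^4\gamma)^\perp$ couples all the normal components of $\gamma$, and the lower-order contributions involving $\nabla_s^2\cz$, $\dca$, $\df$ and the various $[\,\cdot\,]^\perp$ projections in $\BH$ introduce mixings that must be shown to map continuously between the appropriate parabolic H\"older spaces. The bookkeeping needed to verify that, after the tangential modification, the linearisation of the equation for $\phi$ is uniformly parabolic with diagonal fourth-order principal symbol is the only substantive computation; once this is in place the remainder of the argument is entirely standard.
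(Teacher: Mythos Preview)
Your proposal is correct and in fact goes further than the paper does: the paper gives no proof of this theorem at all, simply asserting that local existence is a ``standard matter'' for which one may follow \cite{DKS02} (noting the quasilinear principal part), generalise \cite{MM11} via \cite{L01}, or apply the abstract framework of \cite{B11}. Your sketch is precisely the first of these suggested routes, carried out with the appropriate DeTurck-type gauge fixing, and is sound.
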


We shall work in the class of generalised Helfrich flows where $f$ and $\ca$ satisfy the following
pair of assumptions.

\begin{ass}
\label{Aca}
The ambient vector field
$\ca:\R^n\rightarrow\R^n$ is an affine transformation,
\begin{equation*}
\ca(x) = \L x + \M\,,\quad\text{for }
x\in\R^n\,,
\end{equation*}
where $\L$ is a constant $n\times n$ matrix with the property that
\begin{equation*}
\L\text{ is positive semi-definite}\qquad\text{or}\qquad |\L| \le \lambda,
\end{equation*}
and $\M$ is a constant vector in $\R^n$.
For any matrix $\mathbbm{A}$ we use $|\mathbbm{A}|$ to denote the (induced) operator norm of $\mathbbm{A}$.
\end{ass}

Let us briefly remark that this includes the simpler cases of constant ambient curvature, locally isometric
ambient curvature (where $\ca$ is Killing on $\R^n$) and linear ambient curvature.

\begin{ass}
\label{Af}
The ambient function $f$ is of class $C^\infty$ and satisfies bounds
\begin{equation*}
\sup_{x\in \R^n} \Big|\text{d}^m f\big|_x\Big| \le c_{m}\,,
\end{equation*}
with $m\in\N_0$ and $c_{m}$ a constant depending only on $m$.
\end{ass}

Our first theorem is that under Assumptions \ref{Aca} and \ref{Af} the generalised Helfrich flow with smooth
initial data exists for all time, never encountering a singularity.
We also show that the flow converges up to translation to a solution of the Euler-Lagrange equation for a
limiting functional $\mathcal{H}^{\vec{c}_\infty}_\lambda$, where $\vec{c}_\infty$ is the limit of the
spontaneous curvature along an appropriately chosen subsequence.

\begin{thm}
\label{TMmt}
Let $\gamma:\S^1\times[0,T)\rightarrow\R^n$ be a generalised Helfrich flow.
Suppose $\ca$ and $f$ fulfil Assumptions \ref{Aca} and \ref{Af} respectively and $\lambda>0$.
Then $T=\infty$.
Furthermore, there exists a sequence of times $t_j\rightarrow\infty$, and a sequence of points $p_j\in\R^n$
such that the curves $\gamma(\cdot,t_j)-p_j$ converge, when reparametrised by arclength, to a smooth
curve $\gamma_\infty$.
Along this sequence of times the spontaneous curvature $\c(s,t_j) = \big((\ca\circ\gamma)(s,t_j) +
(f\circ\gamma)(s,t_j)\tau(s)\big)$ also converges smoothly with limit
$\vec{c}_\infty(s) = \lim_{j\rightarrow\infty} \big((\ca\circ\gamma)(s,t_j) +
(f\circ\gamma)(s,t_j)\tau(s)\big)$.
The curve $\gamma_\infty$ is critical for the functional $\SHlim$ and satisfies
\begin{equation}
\BHlim(\gamma_\infty)
=0\,,
\label{EQeulerlagrange}
\end{equation}
where $\BHlim$ is the Euler-Lagrange operator for $\SHlim$.
\end{thm}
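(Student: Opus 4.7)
The plan is to follow the general strategy of \cite{DKS02}, adapted to cope with the non-constant spontaneous curvature $\c$ arising from an affine ambient field $\ca$ and a bounded function $f$.

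First, I would establish uniform bounds on $L(\gamma(t))$ and $\int|\k|^2 ds$ from monotonicity of $\SH$. Expanding
\[
\SH(\gamma) = \tfrac12\int|\k|^2 ds + \tfrac12\int|\c|^2 ds - \int\IP{\k}{\c}ds + \lambda L(\gamma)
\]
and using $\k\perp\tau$ to replace $\IP{\k}{\c}$ with $\IP{\k}{\cz}$, integration by parts on a closed curve gives
\[
-\int\IP{\k}{\cz}ds = -\int\IP{\partial_s^2\gamma}{\L\gamma+\M}ds = \int\IP{\tau}{\L\tau}ds,
\]
which is nonnegative in the PSD case of Assumption \ref{Aca}, so $\SH \ge \tfrac12\int|\k|^2+\lambda L$; in the complementary case $|\L|\le\lambda$ I would Cauchy-Schwarz the cross term, absorb a fraction of $\int|\k|^2$, and use $|\cz|^2\le 2|\L|^2|\gamma|^2 + 2|\M|^2$ together with a translation-invariance argument (recentering at the shortest point of $\gamma(t)$) to conclude the same bounds. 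Since $\SH$ is nonincreasing along the flow by \eqref{EQflow}, this also yields $\int_0^T\int|\BH|^2 ds\,dt \le \SH(\gamma_0)$, and in particular $\sup_t L(\gamma(t))<\infty$ and $\sup_t \int|\k|^2 ds <\infty$.

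Next, to establish $T=\infty$, I would derive evolution equations for the normal derivatives $\int|\nabla_s^m\k|^2 ds$ and run the bootstrap of \cite{DKS02}. Affineness of $\ca$ means $\nabla_s^m\cz$ has the algebraic form $\L\,\nabla_s^{m-2}\k$ plus lower-order tangential/position pieces, so the contribution of $\ca$ to the right-hand side is controlled by $|\L|$ times quantities we already bound; Assumption \ref{Af} supplies the analogous control for $f$ since $d^m f$ is bounded pointwise, regardless of $|\gamma|$. Combining this with the standard curve-interpolation (Gagliardo--Nirenberg) inequalities on $\S^1$, each $\int|\nabla_s^m\k|^2 ds$ stays bounded on $[0,T)$ for every $T<\infty$, which together with Theorem \ref{STE} forces $T=\infty$.

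Third, for the subsequence limit, from $\int_0^\infty\int|\BH|^2 ds\,dt <\infty$ I can select $t_j\to\infty$ with $\int|\BH(\gamma(\cdot,t_j))|^2 ds\to 0$. Setting $p_j := \gamma(u_0,t_j)$ for a fixed base point $u_0\in\S^1$ and $\tilde\gamma_j := \gamma(\cdot,t_j)-p_j$ reparametrised by arclength, all intrinsic quantities ($\k$, $\nabla_s^m\k$, $L$) transfer from $\gamma(\cdot,t_j)$ to $\tilde\gamma_j$ because they are translation-invariant. The uniform $C^m$-bounds from the bootstrap, combined with a fixed base point and bounded length, allow Arzel\`a--Ascoli to extract a smooth subsequential limit $\gamma_\infty$.

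Finally, for the Euler--Lagrange equation in the limit, I would note that the spontaneous curvature of $\tilde\gamma_j$ equals $\ca\circ\tilde\gamma_j + (f\circ(\tilde\gamma_j+p_j))\,\tau = (\ca\circ\gamma)(\cdot,t_j) - \L p_j + (f\circ\gamma)(\cdot,t_j)\,\tau$, so modulo the constant vector $\L p_j$ it and all its arclength derivatives are uniformly bounded (using affineness of $\ca$ and Assumption \ref{Af}). Passing to a further subsequence produces a smooth limit $\vec{c}_\infty$. Since $\BH$ is a smooth algebraic--differential expression in $\k$, its normal derivatives, $\c$, $\dca$, $\df$ and their derivatives, smooth convergence of all the inputs yields $\BH(\gamma(\cdot,t_j))\to\BHlim(\gamma_\infty)$ in $C^0$, while the $L^2$-norm of the left side tends to zero; hence $\BHlim(\gamma_\infty)=0$.

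The main obstacle I expect is the bootstrap in step two: the fact that $\c$ and hence $\BH$ depend on the ambient position $\gamma$, which is a priori unbounded, means position-dependent remainders appear in the evolution of $\int|\nabla_s^m\k|^2 ds$. The tight coupling between the affine structure of $\ca$ (so that $\nabla_s\cz$ reduces to $\L\tau$, eliminating the position variable at top order) and the pointwise bounds on $d^m f$ from Assumption \ref{Af} is precisely what will allow every troublesome term either to cancel, to be absorbed, or to be interpolated into the leading fourth-order normal term.
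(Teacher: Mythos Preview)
Your overall strategy matches the paper's, but there are two concrete gaps.

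First, in your treatment of the case $|\L|\le\lambda$ in step one, you propose to recentre $\gamma$ by translation. This does not work: as the paper stresses, when $\ca$ or $f$ is non-constant the functional $\SH$ is \emph{not} translation invariant, so recentring changes the energy. The paper's argument here is simpler and avoids the position entirely: after integrating by parts one has $-\int_\gamma\IP{\k}{\c}\,ds=\int_\gamma\IP{\tau}{\L\tau}\,ds$ (the $\int\df(\tau)\,ds$ term vanishes by closedness), and then $\big|\int_\gamma\IP{\tau}{\L\tau}\,ds\big|\le|\L|\,L(\gamma)\le\lambda L(\gamma)$ absorbs directly into the length term.

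Second, and more seriously, your bootstrap in step two does not close as stated. You correctly observe that $\nabla_s\cz=[\L\tau]^\perp$ eliminates the position variable at the level of \emph{derivatives} of $\cz$, but the undifferentiated $\c$ itself appears throughout $V_c$ as a coefficient: $\IP{\c}{\tau}\nabla_s\k$, $\tfrac12|\c|^2\k$, $\IP{\cz}{\k}\k$, $\fg\IP{\cz}{\tau}\k$, etc. Since $\cz=\L\gamma+\M$ and $|\gamma|$ is a priori unbounded, you need a separate $L^\infty$ bound on $\c$ before any of these terms can be interpolated. The paper supplies this with Lemma~\ref{LMaprioricontrolofC0}: from the $L^2$ bound $\int_\gamma|\c|^2\,ds\le 2\SH(\gamma_0)$ already obtained and the pointwise bound $|\partial_s\c|\le|\L|+c_1+c_0|\k|$, a mean-value-plus-oscillation argument yields $|\c|\le c(\SH(\gamma_0),\lambda,n,|\L|,c_0,c_1)$ uniformly. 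This is the missing ingredient that lets the evolution estimates for $\int|\nabla_s^m\k|^2\,ds$ close with constants independent of $t$, and it is also what guarantees convergence of $\c(\cdot,t_j)$ in the final step without having to reason ``modulo $\L p_j$''.
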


We remark that there is no smallness condition required for this theorem: the initial data may possess
self-intersections as well as arbitrarily high and concentrated initial energy.

It is not possible to strengthen the subconvergence modulo translation result of Theorem \ref{TMmt}
without imposing additional restrictions on $f$ or $\ca$.
Indeed, solutions satisfying the conditions of Theorem \ref{TMmt} may exhibit rather wild asymptotic behaviour.
In general, one may not strengthen the subconvergence statement given above.
The flow may translate off to infinity or spiral outwards to infinity for example, and the extracted limiting
curve $\gamma_\infty$ which is critical for the limiting functional $\mathcal{H}^{\vec{c}_\infty}_\lambda$
need not be unique.
We demonstate the first of these behaviours with a simple example.

\begin{ex}
\label{EXtranslatingcircle}
Let $\ca \equiv 0$ and $f(x) = \frac{1}{1+|x|^2}$.
Then
\begin{align}
\notag
-\BH(\gamma)
&=
  - \nabla^2_s\k
  + \fg\nabla_s\k
  - \frac12|\fg\tau-\k|^2\k
  + \big(\lambda+|\fg|^2\big)\k
\notag\\&\qquad
  - \Big[
    \IP{\c}{\tau}\big(\df\big)^T
    \Big]^\bot
  + \df(\tau)\,\k
\notag\\
 &= -\nabla_s^2\k + \fg\nabla_s\k - \frac12|\k|^2\k + \lambda\k + \frac12|\fg|^2\k
\notag\\&\qquad
  - \fg\bigg( \frac{-2\gamma}{(1+|\gamma|^2)^2} - \IP{\frac{-2\gamma}{(1+|\gamma|^2)^2}}{\tau}\tau
       \bigg)
  + \IP{\frac{-2\gamma}{(1+|\gamma|^2)^2}}{\tau}\k\,.
\label{EQelfordecayingf}
\end{align}
Let us first consider the case where $\gamma_0 = S_\rho = \rho(\cos \frac{s}{\rho},\sin \frac{s}{\rho}, 0,
\ldots, 0)$ is a circle with radius $\rho$ centred at the origin.
Such circles have the special property that
$\k = -\rho^{-2}\gamma$.
From \eqref{EQelfordecayingf} we compute
\begin{align}
\notag
-\BH(S_\rho)
 &= -\frac{\tau}{1+|\gamma|^2}\bigg(\rho^{-2} - |\k|^2\bigg)
 - \k\bigg(
        \rho^{-4} + |\k|^2 + \frac12|\k|^2 - \lambda - \frac1{2(1+|\gamma|^2)^2}
\\
\notag
&\qquad\qquad
      + \frac2{(1+|\gamma|^2)^2}\IP{\gamma}{\tau}
      + \frac2{(1+|\gamma|^2)^3}\rho^2
      \bigg)
\\
\notag
 &= 
  S_\rho\bigg(
        \rho^{-6} + \frac32\rho^{-4} - \lambda\rho^{-2} - \frac1{2\rho^2(1+|\rho|^2)^2}
      + \frac2{(1+|\rho|^2)^3}
      \bigg)
\end{align}
The solution with initial data $S_\rho$ therefore flows purely by homethety.  Furthermore, if $\rho$
is small, then $\rho^{-6}$ dominates the equation, and causes the circle to expand.  If $\rho$ is
large, then $-\lambda\rho^{-2}$ dominates, and causes the circle to shrink.  There is a
$\lambda_\rho = \lambda_\rho(\rho,\lambda)$ to which the flow converges.

Let us now take $\tilde{S}_\rho = S_\rho + \rho e_3 = \rho(\cos \frac{s}{\rho},\sin \frac{s}{\rho}, 1, 0,
\ldots, 0)$ as initial data.
Note that for this initial data we have $\k = -\rho^{-2}\tilde{S}_\rho + \rho^{-2}e_3$.
Using \eqref{EQelfordecayingf} we compute
\begin{align}
\notag
-\BH(\tilde{S}_\rho)
 &= -\nabla_s^2\k + \fg\nabla_s\k - \frac12|\k|^2\k + \lambda\k + \frac12|\fg|^2\k
\notag\\&\qquad
  - \fg\bigg( \frac{-2\tilde{S}_\rho}{(1+|\tilde{S}_\rho|^2)^2} -
    \IP{\frac{-2\tilde{S}_\rho}{(1+|\tilde{S}_\rho|^2)^2}}{\tau}\tau
       \bigg)
  + \IP{\frac{-2\tilde{S}_\rho}{(1+|\tilde{S}_\rho|^2)^2}}{\tau}\k\,.
\notag
\\
 &= \k\bigg(
     - \frac12\rho^{-2}
     + \lambda 
     + \frac{1}{2(1+2|\rho|^2)^2}
      \bigg)
     + (e_3-\rho^2\k) \bigg(
       \frac{1}{(1+2|\rho|^2)^3}
      \bigg)\,,
\notag
\intertext{setting $\rho = 1/\sqrt{2}$ and $\lambda = \frac{15}{16}$ we have}
-\BH(\tilde{S}_\rho)
 &= \k\bigg(
       \lambda 
     - 1
     + \frac18
     - \frac1{16}
      \bigg)
     + \frac18e_3
  = 
     \frac18e_3\,.
\end{align}
The solution is in this case therefore given by $\gamma(s,t) = S_{1/\sqrt{2}}(s) + \frac{t}{8}e_3$
and slides off to infinity.  Although composing the flow with translations will allow one to extract
a convergent sequence of curves (which are critical for a limiting functional), the flow itself does
not converge.
\end{ex}


Our second theorem provides sufficient conditions under which one may prevent this lack of compactness for the
flow.
It identifies three somewhat independent cases where we may recover full convergence of the flow and
uniqueness of the limit $\gamma_\infty$.
One is a `properness'-type property of the pair $(f,\gamma_0)$.

\begin{ass}
\label{ASfproperlike}
Let $\gamma_0:\S^1\rightarrow\R^n$ be the initial curve in a one-parameter family of $\SH(\gamma_0)$-bounded
curves.
There exists an $R \in [0,\infty)$ such that for all $x$ with $R \le |x| \le
R+(2\lambda)^{-1}\SH(\gamma_0)$,
\begin{equation}
\label{EQconditionfproperlike}
|f(x)| > \frac{\sqrt{3}\SH(\gamma_0)}{\pi}\,.
\end{equation}
\end{ass}

Families of $\SH(\gamma_0)$-bounded curves are discussed in Section 3, and include any generalised Helfrich
flow with initial data $\gamma_0$.

We are also able to obtain full convergence of the flow if $\dca$ is invertible (and non-vanishing),
or if $\ca$ and $f$ are constant.

\begin{thm}
\label{TMmt2}
Let $\gamma:\S^1\times[0,T)\rightarrow\R^n$ be a generalised Helfrich flow satisfying the conditions of
Theorem \ref{TMmt}.
Suppose that one of the following conditions holds:
\begin{enumerate}
\item[(i)]
$\ca$ and $f$ are constant;
\item[(ii)]
$\text{d}\mathbbm{c}$ is invertible and non-vanishing;
\item[(iii)]
The pair $(f,\gamma_0)$ satisfy Assumption \ref{ASfproperlike}.
\end{enumerate}
Then the flow $\gamma$ converges to a unique limit $\gamma_\infty$ which is a critical point
of the Euler-Lagrange equation \eqref{EQeulerlagrange}. 
\end{thm}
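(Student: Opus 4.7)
The plan is to upgrade Theorem \ref{TMmt} by establishing two things separately. First, under each of the hypotheses (i)--(iii), the translation sequence $\{p_j\}$ produced by Theorem \ref{TMmt} can be taken bounded, so that after passing to a further subsequence we obtain smooth convergence $\gamma(\cdot,t_j)\to\gamma_\infty$ with no translation needed. Second, this subsequential limit is independent of the choice of subsequence, via a Lojasiewicz--Simon argument at $\gamma_\infty$.

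For the boundedness of $\{p_j\}$ I would argue case by case. Under (i), $\SH$ is invariant under translations of $\R^n$, so the first variation in any constant direction vanishes and $\int_\gamma \BH(\gamma)\,ds = 0$. A direct computation (using $\gamma_t=-\BH$, the evolution of the arclength element, and $\BH\perp\tau$) shows that the length-weighted centroid $\bar\gamma(t)=L(\gamma(t))^{-1}\int_{\gamma(t)}\gamma\,ds$ evolves through an expression bounded by $\|\gamma\|_{L^\infty}\|\BH\|_{L^2}\|\k\|_{L^2}$, which is in $L^1(dt)$ once one has the Lojasiewicz--Simon bound $\int\|\BH\|_{L^2}\,dt<\infty$ from the next step; recentring about $\bar\gamma$ keeps $\{p_j\}$ bounded. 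Under (ii), invertibility of $\L=\dca$ gives $|\ca(x)|\ge |\L^{-1}|^{-1}|x|-|\M|$. The uniform curvature bounds from the proof of Theorem \ref{TMmt}, together with the elementary inequality $|\k-\c|^2\ge\tfrac12|\c|^2-|\k|^2$, give a uniform bound on $\int_\gamma|\c|^2\,ds$; combining with Fenchel's inequality $L\cdot\|\k\|_{L^2}^2\ge 4\pi^2$ to keep $L(\gamma(t))$ bounded below, if $|\gamma(s_0,t)|$ were unbounded then on an arclength neighbourhood of $s_0$ the vector $|\c|$ would be comparably large, contradicting the uniform $\int|\c|^2$ bound; hence $\gamma$ remains in a fixed ball and $p_j\equiv 0$ works. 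Under (iii), the energy bound gives $L(\gamma(t))\le\SH(\gamma_0)/\lambda$ and hence $\mathrm{diam}(\gamma(\cdot,t))\le L/2\le\SH(\gamma_0)/(2\lambda)$, which equals the annulus width in Assumption \ref{ASfproperlike}. A continuity argument shows that if $\gamma(\cdot,t)$ ever reached the outer radius of the annulus, at some intermediate time the whole curve would lie in the annulus where $|f|>\sqrt{3}\SH(\gamma_0)/\pi$; expanding $|\k-\c|^2=|\k-\cz^\perp|^2+(\IP{\cz}{\tau}+\fg)^2$, using $(a+f)^2\ge\tfrac12 f^2-a^2$, and Fenchel's lower bound on $L$, one derives $\SH(\gamma(t))>\SH(\gamma_0)$, contradicting energy dissipation; so again $\gamma$ is bounded.

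With the translations controlled, the subconvergence of Theorem \ref{TMmt} produces a smooth subsequential limit $\gamma_\infty$ of $\gamma(\cdot,t_j)$. To propagate this to full convergence I would apply a Lojasiewicz--Simon inequality for $\SH$ at $\gamma_\infty$: in an appropriate Sobolev neighbourhood of $\gamma_\infty$,
\begin{equation*}
|\SH(\gamma)-\SH(\gamma_\infty)|^{1-\theta} \le C\,\|\BH(\gamma)\|_{L^2(\gamma)}
\end{equation*}
for some $\theta\in(0,1/2]$. Combined with the dissipation identity $\tfrac{d}{dt}\SH=-\|\BH\|_{L^2}^2$, this gives $-\tfrac{d}{dt}(\SH-\SH(\gamma_\infty))^\theta \ge c\|\BH\|_{L^2}$, integrating to $\int_0^\infty\|\BH\|_{L^2}\,dt<\infty$. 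Standard arguments then upgrade this, via parabolic smoothing, to convergence of the whole flow to $\gamma_\infty$ in every $C^k$ norm.

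The principal obstacle is the Lojasiewicz--Simon inequality itself in this non-standard setting. Since $f$ is merely $C^\infty$ with bounded derivatives and not real-analytic, the classical analyticity-based machinery does not apply verbatim; one must instead use a $C^k$-version of Lojasiewicz--Simon built from the Fredholm alternative for the linearisation of $\BH$ at $\gamma_\infty$ and an $H^s$-to-$L^2$ gap estimate, or alternatively exploit that the nonlinear dependencies enter only through composition with $f$ and the affine $\ca$. Case (i) also contains a mild circularity: the centroid bound uses the $L^1(dt)$ bound on $\|\BH\|_{L^2}$, which itself needs the flow to remain near $\gamma_\infty$ in a Sobolev neighbourhood where the Lojasiewicz inequality is valid. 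This is resolved in the usual way by shrinking the neighbourhood so that the smooth subconvergence along $t_j$ keeps the flow trapped, closing the loop.
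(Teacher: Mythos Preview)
Your proposal has the right ingredients but misallocates them, and there is a genuine gap.

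\textbf{Cases (ii) and (iii).} Your arguments for a uniform bound on $|\gamma|$ are essentially those of the paper (Lemmas \ref{LMaprioricontrolofGamma} and \ref{LMaprioricontrolofGammaf}). However, the paper does \emph{not} pass through a Lojasiewicz--Simon inequality here. Once $\gamma$ is confined to a fixed ball and all derivatives of curvature are uniformly bounded, the paper argues convergence of the full sequence directly. Your worry about analyticity is therefore a self-inflicted obstruction: since $f$ is only $C^\infty$, a Lojasiewicz--Simon inequality for $\SH$ is genuinely unavailable by the classical route, and your suggested ``$C^k$-version'' is not a standard tool and does not follow from Fredholmness of the linearisation alone. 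This is a real gap in your plan for (ii) and (iii); the fix is to drop the Lojasiewicz step entirely in these cases.

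\textbf{Case (i).} Here $\ca$ and $f$ are constant, so $\SH$ \emph{is} real-analytic and the classical Lojasiewicz--Simon machinery does apply; your analyticity concern is misplaced in precisely the one case where you need it. The more serious issue is your handling of the translations. Your centroid argument is circular in a way that is not ``resolved in the usual way'': to bound $\int\vn{\BH}_{L^2}\,dt$ you need the Lojasiewicz inequality, which is only valid while the flow sits in a fixed Sobolev neighbourhood of $\gamma_\infty$; but you are using that integral precisely to show the flow stays near $\gamma_\infty$. The paper breaks this circle by a different idea: rather than bounding the translations, it uses that in case (i) the functional $\SH$ is translation-invariant, so the Lojasiewicz--Simon inequality obtained near $\gamma_\infty$ transports verbatim to every translated neighbourhood $U+p_j$ of $\gamma_\infty+p_j$. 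One writes the flow (for large time) as a graph over $\gamma_\infty$, compares the fixed-measure functional $\SJ$ to $\SH$, checks that the angle between $\partial_t g$ and the $\SJ$-gradient is bounded away from $\pi/2$, and then the standard Simon estimate shows that from time $t_j$ on the flow cannot escape $U+p_j$ and converges there. No a priori bound on $\{p_j\}$ is needed.
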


We remark that Theorem \ref{TMmt2}, case $(i)$, strengthens the convergence statement of \cite[Theorem
3.2]{DKS02}, where $\ca \equiv 0$ and $f = c_0 \in \R$.

The proof of Theorem \ref{TMmt} follows by obtaining uniform a-priori estimates for all normal derivatives of
the curvature in $L^2$, and then converting these into bounds on all Sobolev norms of $\gamma$ in the initial
parametrisation, a standard strategy which was employed in \cite[Proof of Theorem 3.2]{DKS02} for example.


Proving these estimates directly, as is done in \cite{DKS02} for the curve straightening flow in $\R^n$,
appears to be quite difficult.
One encounters a large number of extra terms related to $\c$, and since $\c$ is induced by $\gamma$ these are
interwoven with derivatives of $\gamma$.
A natural strategy is to interpolate these terms away; unfortunately, one faces difficulty in doing this as the
normal derivatives $\nabla_s$ become distorted by $\dca = \L$.
This pair of operations do \emph{not} commute:
\begin{equation*}
\nabla_s \L X - \L\nabla_sX = \IP{\tau}{\partial_sX}\L\tau - \IP{\tau}{\L\partial_sX}\tau
 \ne 0\,.
\end{equation*}
The failure of commutation is in general of the same order as the operations themselves, and so a standard
bootstrapping approach seems unlikely to succeed.
There is a crucial observation to be made however, which is the key idea behind the estimates of Section 4: If
$X$ is a \emph{normal} vector field, then the \emph{normal projection} of the commutator of $\nabla_s$
and $\L$ is
\begin{equation*}
\Big[\nabla_s \L X - \L\nabla_sX\Big]^\perp
 = -\IP{\k}{X}\big[\L\tau\big]^\perp\,, 
\end{equation*}
which is controlled by $\vn{X}_{L^\infty}$ and $\vn{\k}_{L^\infty}$.
By constraining ourselves to obtaining estimates where we must only control the normal projection of the
commutator of $\nabla_s$ and $\L$ of a normal vector field $X$, bootstrapping becomes possible.

Clearly, regardless of the order of $X$, one must first obtain uniform a-priori estimates for $\k$ in
$L^\infty$ for this strategy to have a chance of success.
We produce these estimates through bounding $\nabla_s\k$ in $L^2$.
For this we in turn require a number of preliminary results.
We prove a-priori estimates for the elastic energy of the curve, the length of the curve, the
norm of $\cz$, and in some special cases the length of the position vector.
These follow from the uniform boundedness of the energy $\SH(\gamma_0)$ and are not particular to the flow
\eqref{EQflow}.
These simple estimates are then combined with an interpolation argument for the evolution of
$\vn{\nabla_s\k}_{L^2}^2$ ($\nabla_t\nabla_s\k$ is a normal vector field, and when projected against
$\nabla_s\k$ only the normal component of the commutator need be estimated) which gives a-priori control of
$\vn{\k}_{L^\infty}$.
This allows us to enact a relatively simple argument to obtain a-priori control of
$\vn{\nabla^2_s\k}_{L^2}^2$, which is enough to allow us to carry out our more complicated argument to finally
obtain uniform estimates for $\vn{\nabla^m_s\k}_{L^2}^2$.

The assumptions on $f$ and the uniform bounds on all derivatives of curvature then imply global
existence and subconvergence modulo translation, which is Theorem \ref{TMmt}.
For the finer results of Theorem \ref{TMmt2}, we require two arguments.
For flows satisfying either condition $(ii)$ or condition $(iii)$ of Theorem \ref{TMmt2}, we find a great ball
$B_\rho(0)\subset\R^n$, to which we can contain the image of the flow $\gamma$.
This allows us to show directly the uniqueness of any limit given by Theorem \ref{TMmt} as well as remove the
translations.
Flows satisfying condition $(i)$ are distinguised by their corresponding functionals $\SH$ being
translation-invariant.
It thus seems difficult to imprison the flow a-priori in a great ball.
It is nevertheless possible to obtain a Lojasiewicz-Simon inequality (using an 
argument completely analogous to one contained in Simon's fundamental work \cite{S83}) for the
functional $\SH$ in a neighbourhood of a limit $\gamma_\infty$ given by Theorem \ref{TMmt}.
The translation invariance of the functional can then be exploited to obtain this inequality in any
translated neighbourhood.
If one attempts to proceed as in \cite{S83}, difficulty is encountered due to the time dependence of
the measure $ds$.
Following an idea of Andrews \cite{A97}, we write the flow for sufficiently large time as a
graph over the candidate limit $\gamma_\infty$.  Fixing the measure $ds_0$ (to which $ds$ is
equivalent), we consider an associated functional $\SJ$, which is essentially $\SH$ with the
evolving measure replaced by $ds_0$.
Although the flow is not the gradient flow of $\SJ$, the angle between the Euler-Lagrange operator
$\BJ$ and $\BH$ is bounded for sufficiently large time away from $\pi/2$.  This is enough to then
enact the argument of Simon with rspect to the functional $\SJ$, and establishes the desired
convergence result in this remaining case.

This paper is organised as follows.
In Section 2 we set our notation and compute the first variation of $\SH$.
Section 3 is concerned with deriving some basic consequences of the conservation of the energy.  The results
of this section (apart from Lemma \ref{LMaprioricontrolofGammaf}, which additionally requires the
family to be continuous) hold for any family of immersed curves with uniformly bounded $\SH$.
In Section 4 we obtain a-priori estimates for all derivatives of curvature and prove Theorems \ref{TMmt} and
\ref{TMmt2}.
The Appendix contains the proof of Lemma \ref{LMcirclesclassification}.


\section*{Acknowledgements}

The author would like to thank Ben Andrews for enlightening discussions related to Theorem \ref{TMmt2},
which directly led to the proof of Theorem \ref{TMmt2}, part $(i)$.
This work was completed with the financial support of the Alexander-von-Humboldt Stiftung at the
Otto-von-Guericke Universit\"at Magdeburg, which the author gratefully acknowledges.

\section{Notation and First Variation}

Suppose $\gamma:\S^1\rightarrow\R^n$, $n\ge2$, is a regular smooth immersed closed curve.
The length of $\gamma$ is
\[
L(\gamma) = \int_{\S^1} |\partial_u\gamma|\, du.
\]
We shall keep $\gamma$ parametrised by arc-length $s$, where $ds = |\partial_u\gamma|\, du$.
In this case for notational brevity we identify the parameter domain $\S^1$ with the interval $[0,L(\gamma))$
with its endpoints identified.
Integrals over $\gamma$ are to be interpreted as integrals over the interval of periodicity.

The fundamental geometric objects associated with $\gamma$ are its unit tangent vector field $\tau :=
\partial_s\gamma$ and its curvature $\k := \partial^2_s\gamma$.
Apart from the partial derivative $\partial$, we shall also use the projection onto the normal bundle over
$\gamma$ of $\partial$, denoted by $\nabla$ and defined 
for a vector field $X:\S^1\rightarrow\R^n$ by
\[
\nabla X = \partial X - \IP{\partial X}{\tau}\tau\, ,
\]
where we have used $\IP{\cdot}{\cdot}$ to denote the standard inner product on $\R^n$.
Introducing the normal projection $[\cdot]^\bot$, defined by $[X]^\bot = X - \IP{X}{\tau}\tau$, we write this
succinctly as
\[
\nabla X = \big[\partial X\big]^\bot\,.
\]
Supposing now that $X$ and $Y$ are normal vector fields along $\gamma$, we have $\partial_s\IP{X}{Y} =
\IP{\nabla_sX}{Y} + \IP{X}{\nabla_sY}$ and so we observe the following integration by parts formula
\begin{equation*}
\int_\gamma\IP{\nabla_sX}{Y}\,ds = -\int_\gamma\IP{X}{\nabla_sY}\,ds\,.
\end{equation*}
Clearly, if either of the vector fields $X$, $Y$, are not completely normal, then one must first pass to the
partial derivative $\partial_s$ before performing integration by parts.

We use $X^T$ to denote the transpose of $X$.
We use this notation quite often in the context of the identity
\begin{equation*}
\IP{X}{MY} = \IP{M^TX}{Y}\,
\end{equation*}
where $M$ is an $(n\times n)$ matrix and $X$, $Y$ vectors in $\R^n$.

We shall extend the $P$-style notation used in \cite{DKS02}.
Let $\SP^m$ denote the set of all permutations of $\{1,\ldots,m\}$.
Given vector fields $X_i$, $i=1,\ldots,k$, $k\in\N$, and a permutation $i\in\SP^k$, we denote by
$X_1*\cdots*X_k$ a term of the form
\begin{equation}
X_1*\cdots*X_k
=
\begin{cases}
\IP{X_{i_1}}{X_{i_2}}\cdots\IP{X_{i_{k-1}} }{X_{i_k}}\,,&\text{ for $k$ even}
\\
\IP{X_{i_1}}{X_{i_2}}\cdots\IP{X_{i_{k-2}} }{X_{i_{k-1}} }X_{i_k}\,,&\text{ for $k$ odd}\,.
\end{cases}
\label{EQstarnotation}
\end{equation}
As in \cite{DKS02}, we also allow that some of the $X_i$ are functions, in which case the $*$-product reduces
to multiplication.
We further extend the $*$ product to act upon $1$-forms $\omega_i$, by setting ($X_i$ vector fields,
$\omega_i$ 1-forms)
\begin{align*}
\IP{\omega_i}{X_j} &:= \IP{X_j}{\omega_i} := \omega_i(X_j) = \IP{\omega_i^T}{X_j}\,,\quad\text{and}
\\
\IP{\omega_i}{\omega_j} &:= \omega_i(\omega_j^T) = \omega_j(\omega_i^T) = \IP{\omega_i^T}{\omega_j^T}\,.
\end{align*}
We use the notation $P_\nu^{\mu,k}(X)$ to denote any linear combination of terms of the type
$\nabla_s^{i_1}X*\cdots*\nabla_s^{i_\nu}X$ with universal, constant coefficients, where $\mu = i_1 + \cdots +
i_\nu$ is the total number of derivatives, each of maximum order $k$; that is, $i_j \le k$ for $j = 1,\ldots,\nu$.
We observe the properties
\begin{align*}
P_\nu^{\mu,k}(X)
 &= P_\nu^{\mu,k+l}(X),\text{ for any $l\ge0$}\,,
\\
P_\nu^{\mu,k}(X) * P_\beta^{\alpha,l}(X)
 &= P_{\nu+\beta}^{\mu+\alpha,\,\max\{k,l\}}(X)\,,
\\
\int_\gamma \nabla_sX*P_\nu^{\mu,k}(Y)\,ds
 &= -\int_\gamma X*P_\nu^{\mu+1,k+1}(Y)\,ds\,,\text{ for $X,Y$
normal.}
\end{align*}
Throughout the paper we use the abbreviations
\begin{align*}
P_\nu^{\mu,k}(X) + P_\beta^{\alpha,l}(X)
  = (P_\nu^{\mu,k} + P_\beta^{\alpha,l})(X)\,,\quad\text{and}\quad
P_\nu^\mu(X)
  = P_\nu^{\mu,\mu}(X)\,.
\end{align*}
We shall further use the notation $P_{\nu_1,\nu_2}^{\mu,k}(X;Y)$ to denote any linear combination of terms of
the form
\begin{equation*}
\Big(\nabla_s^{i_1}X*\cdots*\nabla_s^{i_{\nu_1}}X\Big)
*
\Big(\nabla_s^{i_{\nu_1+1}}Y*\cdots*\nabla_s^{i_{\nu_1+\nu_2}}Y\Big)
\end{equation*}
with universal, constant coefficients, where $\mu = i_1 + \cdots + i_{\nu_1+\nu_2}$ is the total number of
derivatives, each of maximum order $k$; that is, $0 \le i_j \le k$ for $j = 1,\ldots,\nu_1+\nu_2$.

We now compute the Euler-Lagrange operator and steepest descent $L^2$-gradient flow of $\SH$.

\begin{lem}
\label{LMfirstvariation}
Suppose $\gamma:\S^1\rightarrow\R^n$ is a smooth, closed, immersed curve.  Then the first variation of $\SH$ at
$\gamma$ in the direction of a vector field $\phi:\S^1\rightarrow\R^n$ (not necessarily normal) is given by
\begin{align*}
\rd{}{t}&\SH(\gamma + t\phi)\bigg|_{t=0}
=
- \int_\gamma \bigg\langle \phi,
  - \nabla^2_s\k
  + \IP{\c}{\tau}\nabla_s\k
  + \nabla_s^2\cz
  - \frac12|\c-\k|^2\k
\\*
&\hskip+2.5cm
  + \Big[\big(\dca\big)^T(\k-\c)
   + \df(\tau)\,\cz
   + \fg\dca(\tau)
   - \IP{\c}{\tau}\big(\df\big)^T \Big]^\bot
\\*
&\hskip+2.5cm
  + \big(\lambda+|\c|^2\big)\k
  + \IP{\dca(\tau)}{\tau}\k
  + \df(\tau)\,\k
  - \fg\IP{\cz}{\tau}\k
                       \bigg\rangle\,ds\,.
\end{align*}
Critical points $\gamma$ of $\SH$ satisfy $\BH(\gamma) \equiv 0$, where $\BH$ is the Euler-Lagrange
operator for $\SH$ in $L^2$ given by
\begin{align*}
\BH(\gamma)
&=
  \nabla^2_s\k
  - \IP{\c}{\tau}\nabla_s\k
  - \nabla_s^2\cz
  + \frac12|\c-\k|^2\k
  - \big(\lambda+|\c|^2\big)\k
  - \IP{\dca(\tau)}{\tau}\k
\\&\quad
  - \Big[\big(\dca\big)^T(\k-\c)
   + \df(\tau)\,\cz
   + \fg\dca(\tau)
   - \IP{\c}{\tau}\big(\df\big)^T \Big]^\bot
\\&\quad
  - \df(\tau)\,\k
  + \fg\IP{\cz}{\tau}\k\,.
\end{align*}
The steepest descent $L^2$-gradient flow of $\SH$ with initial data $\gamma_0$ is the one-parameter family of
immersions $\gamma:\S^1\times[0,T)\rightarrow\R^n$ satisfying $\gamma(\cdot,0) = \gamma_0$ and
\begin{align*}
\partial_t\gamma
&=
  - \nabla^2_s\k
  + \IP{\c}{\tau}\nabla_s\k
  + \nabla_s^2\cz
  - \frac12|\c-\k|^2\k
  + \big(\lambda+|\c|^2\big)\k
  + \IP{\dca(\tau)}{\tau}\k
\\&\quad
  + \Big[\big(\dca\big)^T(\k-\c)
   + \df(\tau)\,\cz
   + \fg\dca(\tau)
   - \IP{\c}{\tau}\big(\df\big)^T \Big]^\bot
\\&\quad
  + \df(\tau)\,\k
  - \fg\IP{\cz}{\tau}\k\,.
\end{align*}
\end{lem}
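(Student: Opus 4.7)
The plan is to compute $\frac{d}{dt}\SH(\gamma + t\phi)\big|_{t=0}$ directly: expand $|\k - \c|^2$ using $\IP{\k}{\tau} = 0$ to obtain
\[
|\k-\c|^2 = |\k|^2 - 2\IP{\k}{\cz} + |\cz|^2 + 2\fg\IP{\cz}{\tau} + \fg^2,
\]
so that $\SH$ splits into six summands plus $\lambda L$; differentiate each summand; and integrate by parts to regroup into the form $\int_\gamma\IP{\phi}{V}\,ds$. I would first establish the elementary variational identities at $t=0$, namely
\[
\partial_t ds = \IP{\tau}{\partial_s\phi}\,ds,\quad
\partial_t\tau = \partial_s\phi - \IP{\tau}{\partial_s\phi}\tau,\quad
\partial_t\k = \partial_s(\partial_t\tau) - \IP{\tau}{\partial_s\phi}\k,
\]
together with the chain-rule formulas $\partial_t\cz = \dca(\phi)$, $\partial_t\fg = \df(\phi)$, and the product rule $\partial_t\c = \dca(\phi) + \df(\phi)\tau + \fg\,\partial_t\tau$. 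Since $\SH(\gamma)$ depends only on the image of $\gamma$, tangential variations are first-order reparametrisations and contribute nothing; it is therefore convenient to carry out the calculation for normal $\phi$ and then assemble the final answer paired with a general $\phi$ via the normal projection $[\cdot]^\perp$.

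Next, I would treat each summand in turn. The variations of $\tfrac12\int|\k|^2\,ds$ and $\lambda L$ are the classical elastic and length computations (as in \cite{DKS02}), contributing $\nabla_s^2\k + \tfrac12|\k|^2\k$ and $-\lambda\k$ respectively. For the mixed terms the essential manoeuvres are: push $\partial_s$ off $\phi$ via integration by parts; expose the adjoints through $\IP{X}{\L Y} = \IP{\L^TX}{Y}$; and use $\partial_s\c = \dca(\tau) + \df(\tau)\tau + \fg\k$ whenever a derivative of $\c$ arises. The term $\nabla_s^2\cz$ and the commutator $\IP{\c}{\tau}\nabla_s\k$ emerge from iterating integration by parts in the cross-integral $\int\IP{\k}{\cz}\,ds$, while the bracketed expression $[(\dca)^T(\k-\c) + \df(\tau)\cz + \fg\dca(\tau) - \IP{\c}{\tau}(\df)^T]^\perp$ collects all the ambient-data contributions once $\dca$ and $\df$ have been transposed onto $\phi$, the projection $[\cdot]^\perp$ appearing because the result is paired against the normal part of $\phi$. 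Every factor of $\k$ standing outside a derivative in the final formula can be traced to the length-element variation $\partial_t ds = \IP{\tau}{\partial_s\phi}ds$, which after integration by parts yields the factor $-\IP{\k}{\phi}$.

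The principal obstacle is bookkeeping rather than conceptual difficulty. The variation of $\int\fg\IP{\cz}{\tau}\,ds$ alone produces four coupled contributions from $\partial_t\fg$, $\partial_t\cz$, $\partial_t\tau$ and $\partial_t ds$, each requiring its own transposition or integration-by-parts step, and terms must be carefully combined across summands to see that, for instance, the coefficients of $\IP{\k}{\phi}\k$ sum precisely to $\tfrac12|\c-\k|^2$. Once all contributions are collected and the coefficient vector of $\phi$ is identified, it matches $-\BH$, yielding the stated first variation. The Euler--Lagrange equation $\BH(\gamma)\equiv 0$ then follows from the fundamental lemma of the calculus of variations, and the $L^2$ steepest-descent flow formula is immediate from the defining relation $\partial_t\gamma = -\BH(\gamma)$, which along the flow yields $\rd{}{t}\SH(\gamma) = -\int_\gamma|\BH(\gamma)|^2 ds$ by pairing with the first variation.
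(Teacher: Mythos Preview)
Your proposal is correct and follows essentially the same route as the paper: expand the square, differentiate each summand using the standard variational identities for $ds$, $\tau$, $\k$ (and the chain-rule formulas for $\cz$, $\fg$), then integrate by parts to collect the coefficient of $\phi$. The only cosmetic differences are that the paper groups the expansion into three blocks $\tfrac12\int|\k|^2+\lambda L$, $\tfrac12\int|\c|^2$, $-\int\IP{\k}{\c}$ rather than your six, and that the paper carries the tangential component $\varphi$ through explicitly and watches it cancel via integration by parts, whereas you invoke reparametrisation invariance of $\SH$ to discard it up front---a legitimate shortcut since every ingredient of $\c$ depends only on the image of $\gamma$.
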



\begin{proof}
Let $\gamma:\S^1\rightarrow\R^n$ be a smooth, closed immersed curve parametrised by arc-length.
Consider the variation $\eta$ of $\gamma$ given by
\[
\eta(s,t) = \gamma(s) + t\phi(s) = \gamma(s) + t\big(V(s)+\varphi(s)\tau(s)\big),
\]
where $\phi(s) = V(s)+\varphi(s)\tau(s)$ with $V$ a vector field normal along $\gamma$ and
$\varphi:\S^1\rightarrow\R$ a function.
Note that $\partial_t\eta = V + \varphi\tau$.
The first variation of an integral $\int_\eta h(s,t)\, ds$, where $h:\S^1\times[0,T)\rightarrow\R$ is some
differentiable function, is given by the formula
\begin{equation}
\label{EQfvintevo}
\rd{}{t}\int_\eta h\,ds\bigg|_{t=0}
 = \int_\gamma (\partial_th)\,ds + \int_\gamma h(\partial_s\varphi - \IP{V}{\k})\,ds\,.
\end{equation}

The evolution of $\tau$ and $\k$ is computed in \cite[Lemma 2.1]{DKS02}:
\begin{align}
\label{EQfvpartialttau}
\partial_t\tau &= \nabla_sV + \varphi\k
\\
\label{EQfvpartialtk}
\partial_t\k &= \nabla_s^2V + \IP{\k}{V}\k + \varphi(\nabla_s\k - |\k|^2\tau) - \IP{\k}{\nabla_sV}\tau\,.
\end{align}
Clearly \eqref{EQfvpartialttau} implies
\begin{equation}
\label{EQfvczevo}
\partial_t\c
 = \dca(V+\varphi\tau) + \df(V+\varphi\tau)\tau + \fg\nabla_sV + \varphi\fg\,\k\,.
\end{equation}

Expanding the square in $\SH$ we find
\begin{equation}
\label{EQfvshdecomp}
\SH(\eta)
  =
     \bigg[ \frac12\int_\eta |\k|^2ds
    + \lambda L(\eta)\bigg]
    + \bigg[\frac12\int_\eta |\c|^2ds\bigg]
    + \bigg[- \int_\eta \IP{\k}{\c}\,ds\bigg]\,.
\end{equation}
We shall compute the first variation of each term in turn.
The first variation of the first term in \eqref{EQfvshdecomp} is computed in \cite{DKS02}; for completeness,
we briefly summarise the computation below.
Using \eqref{EQfvintevo} and then \eqref{EQfvpartialtk} we have
{%
\setlength{\belowdisplayskip}{9pt}%
\setlength{\abovedisplayskip}{9pt}%
\begin{align*}
\rd{}{t}&\Big[\frac12\int_\eta |\k|^2ds + \lambda L(\eta)\Big]\bigg|_{t=0}
  =   \int_\gamma \Big(\frac12|\k|^2 + \lambda\Big)\Big(\partial_s\varphi - \IP{V}{\k}\Big)\,ds
    + \int_\gamma \IP{\k}{\partial_t\k}\,ds
\\
 &=   \int_\gamma \Big(\frac12|\k|^2 + \lambda\Big)\Big(\partial_s\varphi - \IP{V}{\k}\Big)\,ds
    + \int_\gamma \IP{\k}{\nabla_s^2V + \IP{\k}{V}\k + \varphi(\nabla_s\k - |\k|^2\tau)}\,ds
\\
 &= - \int_\gamma \IP{V}{ -\nabla^2_s\k -\frac12|\k|^2\k + \lambda\k}\,ds\,,
\intertext{
where for the last equality we applied integration by parts and the identity
\begin{equation}
\label{EQfvpartialsknablask}
\partial_s\k = \nabla_s\k - |\k|^2\tau\,.
\end{equation}
For the second term of \eqref{EQfvshdecomp}, we use \eqref{EQfvintevo} and \eqref{EQfvczevo} to
compute
}
\rd{}{t}&\Big[\frac12\int_\eta |\c|^2ds \Big]
 = - \int_\gamma \IP{V}{ \frac12|\c|^2\k }\,ds
   + \int_\gamma \IP{\c}{\dca(V) + \df(V)\,\tau + \fg\,\nabla_sV}\,ds
\\
&\quad
   + \frac12\int_\gamma |\c|^2(\partial_s\varphi)\,ds
   + \int_\gamma \IP{\c}{\dca(\varphi\tau) + \df(\varphi\tau)\,\tau + \fg\,(\varphi\k)}\,ds
\\
&= - \int_\gamma \IP{V}{
            \frac12|\c|^2\k
          - (\dca)^T\c - \IP{\c}{\tau}(\df)^T 
                       }\,ds
\\
&\quad
   + \int_\gamma \IP{\c}{
            \fg \big(\partial_sV + \IP{\k}{V}\tau\big)}\,ds\,,
\intertext{
where for the last equality we used integration by parts and the identities
\begin{align}
\nabla_sV
 &= \partial_sV + \IP{\k}{V}\tau
\notag
\\
\partial_s\c
 &= \dca(\tau) + \df(\tau)\,\tau + \fg\,\k\,.
\label{EQpartialscz}
\end{align}
Integrating by parts again on the second term, applying \eqref{EQpartialscz} and rearranging we find
}
\rd{}{t}&\Big[\frac12\int_\eta |\c|^2ds \Big]
= - \int_\gamma \bigg\langle V, 
            \frac12|\c|^2\k
          - (\dca)^T\c - \IP{\c}{\tau}(\df)^T
          + \df(\tau)\,\c
\\*
&\hskip+6cm
          + \fg\big(\dca(\tau) + \df(\tau)\,\tau + \fg\k\big)
          - \fg\IP{\c}{\tau}\k
                       \bigg\rangle\,ds
\\
&= - \int_\gamma \bigg\langle V, 
            \frac12|\c|^2\k
          - (\dca)^T\c - \IP{\c}{\tau}(\df)^T
          + \df(\tau)\,\cz
\\*
&\hskip+6cm
          + 2\df(\tau)\,\tau
          + \fg\dca(\tau)
          - \fg\IP{\cz}{\tau}\k
                       \bigg\rangle\,ds\,.
\intertext{
Continuing with the third term in \eqref{EQfvshdecomp}, we use \eqref{EQfvintevo},
\eqref{EQfvpartialtk}, \eqref{EQfvczevo}, then integration by parts and \eqref{EQfvpartialsknablask},
\eqref{EQpartialscz} to obtain
}
\rd{}{t}&\Big[-\int_\eta \IP{\k}{\c}\,ds \Big]
 =
     \int_\gamma \IP{\k}{\c}\big(\IP{V}{\k}-\partial_s\varphi\big)\,ds
   - \int_\gamma \IP{\k}{\partial_t\c}\,ds
   - \int_\gamma \IP{\c}{\partial_t\k}\,ds
\\
&=
     \int_\gamma \IP{\k}{\c}\IP{V}{\k}\,ds
   - \int_\gamma \IP{\k}{\dca(V) + \df(V)\,\tau + \fg\nabla_sV}\,ds
\\*
&\qquad
   - \int_\gamma \IP{\c}{\nabla_s^2V + \IP{\k}{V}\k - \IP{\k}{\nabla_sV}\tau}\,ds
   - \int_\gamma \IP{\k}{\c}\partial_s\varphi\,ds
\\*
&\qquad
   - \int_\gamma \IP{\k}{\dca(\varphi\tau) + \df(\varphi\tau)\,\tau + \fg\,(\varphi\k)}\,ds
   - \int_\gamma \IP{\c}{\varphi(\nabla_s\k - |\k|^2\tau) } \,ds
\\
&=
   - \int_\gamma \IP{\k}{\dca(V) + \fg\nabla_sV}\,ds
   - \int_\gamma \IP{\c}{\nabla_s^2V - \IP{\k}{\nabla_sV}\tau}\,ds
\\*
&\qquad
   + \int_\gamma \varphi\IP{\partial_s\k}{\c}\,ds
   - \int_\gamma \varphi\IP{\c}{\nabla_s\k - |\k|^2\tau} \,ds
\\
&=
   - \int_\gamma \bigg\langle V, (\dca)^T\k - \fg\nabla_s\k - \df(\tau)\,\k
                      + \nabla_s^2\c
                      + \IP{\c}{\tau}\nabla_s\k
\\*
&\hskip+4cm
                      + \IP{\c}{\k}\k
                      + \IP{\dca(\tau)}{\tau}\k + \df(\tau)\,\k
                       \bigg\rangle\,ds
\\
&=
   - \int_\gamma \bigg\langle V, (\dca)^T\k 
                      + \nabla_s^2\c
                      + \IP{\cz}{\tau}\nabla_s\k
                      + \IP{\cz}{\k}\k
                      + \IP{\dca(\tau)}{\tau}\k
                       \bigg\rangle\,ds\,.
\intertext{Combining these calculations we have}
\rd{}{t}&\SH(\eta)
  =
     \rd{}{t}\bigg[ \frac12\int_\eta |\k|^2ds
    + \lambda\rd{}{t} L(\eta)\bigg]
    + \rd{}{t}\bigg[\frac12\int_\eta |\c|^2ds\bigg]
    + \rd{}{t}\bigg[- \int_\eta \IP{\k}{\c}\,ds\bigg]\,.
\\
&=
- \int_\gamma \IP{V}{ -\nabla^2_s\k -\frac12|\k|^2\k + \lambda\k}\,ds
\\
&\quad
   - \int_\gamma \bigg\langle V, 
            \frac12|\c|^2\k
          - (\dca)^T\c - \IP{\c}{\tau}(\df)^T
          + \df(\tau)\,\cz
\\
&\hskip+6cm
          + 2\df(\tau)\,\tau
          + \fg\dca(\tau)
          - \fg\IP{\cz}{\tau}\k
                       \bigg\rangle\,ds
\\
&\quad
   - \int_\gamma \bigg\langle V, (\dca)^T\k 
                      + \nabla_s^2\c
                      + \IP{\cz}{\tau}\nabla_s\k
                      + \IP{\cz}{\k}\k
                      + \IP{\dca(\tau)}{\tau}\k
                       \bigg\rangle\,ds
\\
&=
- \int_\gamma \bigg\langle V,
                     - \nabla^2_s\k
                     + \IP{\cz}{\tau}\nabla_s\k
                     + \nabla_s^2\c
                     - \frac12|\c-\k|^2\k
          - \fg\IP{\cz}{\tau}\k
\\*
&\hskip+4cm
                     + \big(\lambda+|\c|^2\big)\k
                     + \IP{\dca(\tau)}{\tau}\k
                     + (\dca)^T(\k-\c)
                    \bigg\rangle\,ds
\\*
&\quad
- \int_\gamma \bigg\langle V, 
          - \IP{\c}{\tau}(\df)^T
          + \df(\tau)\,\cz
          + 2\df(\tau)\,\tau
          + \fg\dca(\tau)
                       \bigg\rangle\,ds\,.
\intertext{Now since}
&\hskip+4cm \nabla_s^2\c = \nabla_s^2\cz + \df(\tau)\,\k + \fg\nabla_s\k\,,
\intertext{this simplifies to}
\rd{}{t}&\SH(\eta)
\\*
&=
- \int_\gamma \bigg\langle V,
                     - \nabla^2_s\k
                     + \IP{\c}{\tau}\nabla_s\k
                     + \nabla_s^2\cz
                     - \frac12|\c-\k|^2\k
          - \fg\IP{\cz}{\tau}\k
\\*
&\hskip+4cm
                     + \big(\lambda+|\c|^2\big)\k
                     + \IP{\dca(\tau)}{\tau}\k
                     + (\dca)^T(\k-\c)
                    \bigg\rangle\,ds
\\*
&\quad
- \int_\gamma \bigg\langle V, 
          - \IP{\c}{\tau}(\df)^T
          + \df(\tau)\,(\cz+\k)
          + 2\df(\tau)\,\tau
          + \fg\dca(\tau)
                       \bigg\rangle\,ds\,.
\end{align*}%
}%
Noting that $V$ is normal on $\gamma$, the $L^2$-gradient of $\SH$ is 
\begin{align*}
\BH(\gamma)
&=
   \nabla^2_s\k
   - \IP{\c}{\tau}\nabla_s\k
   - \nabla_s^2\cz
   + \frac12|\c-\k|^2\k
   + \fg\IP{\cz}{\tau}\k
   - \big(\lambda+|\c|^2\big)\k
\\*
&\quad
   - \bigg[(\dca)^T(\k-\c)
      - \IP{\c}{\tau}(\df)^T
      + \df(\tau)\,\cz
      + \fg\dca(\tau)
     \bigg]^\perp
\\*
&\quad
   - \df(\tau)\,\k
   - \IP{\dca(\tau)}{\tau}\k
\,.
\end{align*}
The Euler-Lagrange equation for $\SH$ is $\BH(\gamma)\equiv0$ and the steepest descent $L^2$-gradient flow
with initial data $\gamma_0$ is the one-parameter family of immersed curves
$\gamma:\S^1\times[0,T)\rightarrow\R^n$ satisfying $\gamma(\cdot,0) = \gamma_0$ and $\partial_t\gamma =
-\BH(\gamma)$.

\end{proof}


\section{Families of curves with uniformly bounded $\SH$}

The results of this section (apart from Lemma \ref{LMaprioricontrolofGammaf}, which requires an
additional continuity  assumption) hold for any one-parameter family of closed curves
$\gamma:\S^1\times I\rightarrow\R^n$, $I$ an interval (not necessarily bounded) with uniformly
bounded $\SH$.
In order to remain notationally consistent with the application of these estimates to the generalised Helfrich
flow, we write this uniform bound as
\begin{equation*}
\SH(\gamma(\cdot,t)) \le \SH(\gamma_0)\,,
\end{equation*}
where $\SH(\gamma_0)$ denotes a constant.  In the case of a generalised Helfrich flow, it will denote the
energy of the initial data.
We do not require that the family be differentiable (in time).
Each $\gamma(\cdot,t)$, $t\in I$, need only enough spatial regularity so that $\k\in L^2(\S^1)$ for each $t\in
I$.
This bound is not assumed a-priori to be uniform.
This regularity assumption is the same as $\gamma(\cdot,t)$ being of class $W^{2,2}$ in the arc-length
parametrisation.
Any family satisfying these conditions is termed \emph{$\SH(\gamma_0)$-bounded}.

We begin by demonstrating that every $\SH(\gamma_0)$-bounded family has $\vn{\k}_{L^2}$ and
$\vn{\c}_{L^2}$ bounded \emph{uniformly}.
Assumptions \ref{Aca} and \ref{Af} are typically assumed throughout.

\begin{lem}
\label{LMaprioriL2controlofK}
Let $\gamma:\S^1\times I\rightarrow\R^n$ be a one-parameter family of $\SH(\gamma_0)$-bounded curves.
Suppose $\ca$ and $f$ fulfil Assumptions \ref{Aca} and \ref{Af} respectively and $\lambda>0$.
Then,
\begin{equation}
\label{EQLMaprioriL2controlofKCNCLSN}
\frac12\int_\gamma |\k|^2 ds
+ \frac12\int_\gamma |\c|^2 ds
 \le \SH(\gamma_0).
\end{equation}
\end{lem}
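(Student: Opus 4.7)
The plan is to expand the square in $\SH(\gamma)$ and rewrite the inequality as an upper bound on the mixed term $\int_\gamma \IP{\k}{\c}\,ds$. Specifically,
\begin{equation*}
\SH(\gamma) = \frac12\int_\gamma|\k|^2\,ds + \frac12\int_\gamma|\c|^2\,ds - \int_\gamma \IP{\k}{\c}\,ds + \lambda L(\gamma),
\end{equation*}
so the claim \eqref{EQLMaprioriL2controlofKCNCLSN} is equivalent to
\begin{equation*}
\int_\gamma \IP{\k}{\c}\,ds \le \lambda L(\gamma).
\end{equation*}

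The first observation I would use is that $\k = \partial_s^2\gamma$ is normal, so $\IP{\k}{\tau}=0$ and hence $\IP{\k}{\c} = \IP{\k}{\cz}$. In particular $f$ drops out of the mixed term entirely, which is why Assumption \ref{Af} plays no role here. Next, I would exploit the affine structure of $\ca$ from Assumption \ref{Aca}: writing $\cz = \L\gamma + \M$ and integrating by parts on $\S^1$ (closedness kills the boundary term and also the $\M$ contribution, since $\int_\gamma \k\,ds = \int \partial_s^2\gamma\,ds = 0$), one obtains
\begin{equation*}
\int_\gamma \IP{\k}{\cz}\,ds = \int_\gamma \IP{\partial_s^2\gamma}{\L\gamma}\,ds = -\int_\gamma \IP{\tau}{\L\tau}\,ds,
\end{equation*}
using that $\L$ is constant so $\partial_s(\L\gamma) = \L\tau$.

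The remaining step is a simple case distinction coming from Assumption \ref{Aca}. If $\L$ is positive semi-definite then $\IP{\tau}{\L\tau} \ge 0$ pointwise, so $\int_\gamma \IP{\k}{\cz}\,ds \le 0 \le \lambda L(\gamma)$. If instead $|\L| \le \lambda$, then since $|\tau|=1$ we have $|\IP{\tau}{\L\tau}| \le |\L| \le \lambda$ pointwise, giving $\int_\gamma \IP{\k}{\cz}\,ds \le \lambda L(\gamma)$. In either case the desired bound holds, and combining with the rearrangement above and the $\SH(\gamma_0)$-boundedness assumption finishes the proof.

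There is no serious obstacle: the whole argument hinges on spotting that integration by parts converts the troublesome cross term $\int\IP{\k}{\c}$ into a pointwise expression in $\tau$, which is exactly what is controlled by the two alternatives in Assumption \ref{Aca}. The fact that the spontaneous curvature is induced by an \emph{affine} $\ca$ — rather than a general smooth vector field — is used in an essential way, as is the closedness of $\gamma$; both features of the problem are what make this elementary estimate possible.
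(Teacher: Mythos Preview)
Your proof is correct and follows essentially the same approach as the paper: expand the square, reduce the cross term $\int_\gamma\IP{\k}{\c}\,ds$ to $-\int_\gamma\IP{\tau}{\L\tau}\,ds$ via integration by parts, and then split into the two cases of Assumption~\ref{Aca}. The only cosmetic difference is that you discard the $\fg\tau$ part of $\c$ using $\IP{\k}{\tau}=0$ before integrating by parts, whereas the paper integrates by parts on the full $\c$ and then observes that $\int_\gamma\df(\tau)\,ds=\int_\gamma(f\circ\gamma)'\,ds=0$ by closedness.
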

\begin{proof}
By assumption we have
\begin{equation}
\label{EQLMaprioriL2controlofKPF1}
\frac12\int_\gamma |\k-\c|^2 ds
+ \lambda L(\gamma)
\le \SH(\gamma_0).
\end{equation}
Note that
\begin{equation*}
\frac12\int_\gamma |\k-\c|^2 ds
=
  \frac12\int_\gamma |\k|^2 ds
+ \frac12\int_\gamma |\c|^2 ds
- \int_\gamma \IP{\k}{\c} ds
\end{equation*}
and
\begin{equation}
\label{EQLMaprioriL2controlofKPF2}
- \int_\gamma \IP{\k}{\c} ds
=
- \int_\gamma \IP{\partial^2_s\gamma}{\c} ds
=
    \int_\gamma \IP{\tau}{\dca(\tau)} ds
 +  \int_\gamma \df(\tau)\,ds\,.
\end{equation}
Since $\df(\tau) = (f\circ\gamma)'(s)$ and $\gamma$ is closed, the second integral vanishes.
If $\ca$ satisfies Assumption \ref{Aca} with $\L$ positive semi-definite, then
\begin{align*}
  \frac12\int_\gamma |\k|^2 ds
+ \frac12\int_\gamma |\c|^2 ds
+ \lambda L(\gamma)
&\le \SH(\gamma_0)
- \int_\gamma \IP{\tau}{\dca(\tau)} ds
\\
&\le \SH(\gamma_0).
\end{align*}
If $\ca$ instead satisfies Assumption \ref{Aca} with $|\L| \le \lambda$, then
\begin{align*}
  \frac12\int_\gamma |\k|^2 ds
+ \frac12\int_\gamma |\c|^2 ds
+ \lambda L(\gamma)
&\le \SH(\gamma_0)
- \int_\gamma \IP{\tau}{\dca(\tau)} ds
\\
&\le \SH(\gamma_0)
+ \int_\gamma \Big|\dca\Big|\, ds
\\
&\le \SH(\gamma_0)
+ \lambda L(\gamma)
\end{align*}
and subtracting $\lambda L(\gamma)$ from both sides gives \eqref{EQLMaprioriL2controlofKCNCLSN}.
\end{proof}

We now use the uniform bounds on $\vn{\k}_{L^2}$ and $\SH(\gamma)$ to obtain uniform upper and lower bounds
for $L(\gamma)$.

\begin{lem}
\label{LMaprioricontrolofL}
Let $\gamma:\S^1\times I\rightarrow\R^n$ be a one-parameter family of $\SH(\gamma_0)$-bounded curves.
Suppose $\ca$ and $f$ fulfil Assumptions \ref{Aca} and \ref{Af} respectively and $\lambda>0$.
Then,
\begin{equation}
\label{EQLMaprioricontrolofLCNCLSN}
 \frac{2\pi^2}{\SH(\gamma_0)}
\ \le\ L(\gamma)\ \le\ \frac{1}{\lambda}\SH(\gamma_0)\,.
\end{equation}
\end{lem}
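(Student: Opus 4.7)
The plan is to handle the two inequalities separately; both follow quickly from Lemma \ref{LMaprioriL2controlofK} together with one classical input from the geometry of closed curves.

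For the upper bound, I would simply observe that $\SH(\gamma)$ is a sum of non-negative terms, so $\lambda L(\gamma) \le \SH(\gamma) \le \SH(\gamma_0)$, which rearranges to $L(\gamma) \le \SH(\gamma_0)/\lambda$. This step needs nothing beyond the definition of $\SH$ and the hypothesis $\lambda > 0$.

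For the lower bound, the key external ingredient is Fenchel's theorem: for any closed immersed curve $\gamma:\S^1 \to \R^n$ one has
\[
\int_\gamma |\k|\,ds \ge 2\pi.
\]
Applying the Cauchy--Schwarz inequality to the factorisation $|\k| = |\k|\cdot 1$ yields
\[
4\pi^2 \le \left(\int_\gamma |\k|\,ds\right)^2 \le \left(\int_\gamma |\k|^2\,ds\right) L(\gamma).
\]
Now the $L^2$ bound on $\k$ from Lemma \ref{LMaprioriL2controlofK} gives $\int_\gamma |\k|^2\,ds \le 2\SH(\gamma_0)$, and substituting this in and solving for $L(\gamma)$ produces
\[
L(\gamma) \ge \frac{4\pi^2}{2\SH(\gamma_0)} = \frac{2\pi^2}{\SH(\gamma_0)},
\]
which is the desired inequality.

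There is no genuine obstacle here: the only non-trivial input is Fenchel's theorem, which applies to any closed immersed curve and in particular to each $\gamma(\cdot,t)$, and everything else is a one-line application of Cauchy--Schwarz followed by citing the previous lemma.
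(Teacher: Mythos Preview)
Your proof is correct. The upper bound argument matches the paper's exactly. For the lower bound, however, the paper takes a different but equivalent route: instead of invoking Fenchel's theorem, it uses that $\int_\gamma \tau^i\,ds = 0$ for each component of the unit tangent (since $\gamma$ is closed) and applies the standard Poincar\'e/Wirtinger inequality on $[0,L(\gamma)]$ to obtain
\[
L(\gamma) = \int_\gamma |\tau|^2\,ds \le \frac{L(\gamma)^2}{4\pi^2}\int_\gamma |\k|^2\,ds,
\]
which rearranges to the same intermediate inequality $4\pi^2 \le L(\gamma)\int_\gamma |\k|^2\,ds$ that you obtain from Fenchel plus Cauchy--Schwarz. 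Both arguments are valid and arrive at precisely the same inequality before invoking Lemma~\ref{LMaprioriL2controlofK}; the paper's version is slightly more self-contained (it needs only the Wirtinger inequality, which holds directly at the $W^{2,2}$ regularity assumed for the family), whereas yours is more geometric in flavour and imports a classical result from curve theory.
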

\begin{proof}
As $\gamma$ is closed we have $\int_\gamma \tau^i\, ds = 0$ where $\tau^i = \IP{\tau}{e_i}$ and
$\{e_i\}_{i=1}^n$ is an orthonormal basis of $\R^n$.
The standard Poincar\'e inequality then implies
\begin{equation*}
L(\gamma) = \int_\gamma |\tau|^2 ds \le \frac{L(\gamma)^2}{4\pi^2} \int_\gamma |\k|^2 ds.
\end{equation*}
Combining this with Lemma \ref{LMaprioriL2controlofK} gives
\begin{equation*}
L(\gamma) \ge 4\pi^2 \vn{\k}_{L^2}^{-2}
 \ge \frac{2\pi^2}{\SH(\gamma_0)},
\end{equation*}
which is the first inequality in \eqref{EQLMaprioricontrolofLCNCLSN}.
For the second, simply note that \eqref{EQLMaprioriL2controlofKPF1} implies
\begin{equation*}
\lambda L(\gamma)
\le \SH(\gamma_0) -
\frac12\int_\gamma |\k-\c|^2 ds
\le \SH(\gamma_0).
\end{equation*}
\end{proof}

Geometric flows and functionals are typically invariant under translations, and so one may bound the length of
the position vector by translating the origin at each time to any point on the curve and using the inequality
$|\gamma| \le L(\gamma)/2$.
In fact, in the case where $\SH$ is translation-invariant, one can do much better than this, as the proof of
Theorem \ref{TMmt} shows.

If $\cz$ and $\fg$ are not constants, then the functional $\SH$ (and the flow \eqref{EQflow}) is
\emph{not} invariant under translations, and it becomes a nontrivial matter to bound the length of the
position vector along a $\SH(\gamma_0)$-bounded family.
Indeed, as Example \ref{EXtranslatingcircle} shows, such an estimate does not in general hold.

It is possible however to enforce additional restrictions upon $\fg$ and $\cz$ which allow us to uniformly
bound $|\gamma|$ a-priori.
We begin with the case where $\L$ is invertible and non-vanishing.  Here one may perform a direct argument,
and show that there is an absolute bounded radius $\rho$ such that any $\SH(\gamma_0)$-bounded family of
curves remain contained in the ball $B_\rho(0)$.

\begin{lem}
\label{LMaprioricontrolofGamma}
Let $\gamma:\S^1\times I\rightarrow\R^n$ be a one-parameter family of $\SH(\gamma_0)$-bounded curves.
Suppose $\ca$ and $f$ fulfil Assumptions \ref{Aca} and \ref{Af} respectively and $\lambda>0$.
If $\L$ is invertible and non-vanishing, then
\begin{equation}
\label{EQLMaprioricontrolofGammaCNCLSN}
|\gamma|
 \le n\SH(\gamma_0)\bigg(
   \frac{1}{\lambda} + \frac{1}{\pi} |\L^{-1}| \sqrt{(|\M|^2+2c_0^2)\lambda^{-1} + 4}
                 \,\bigg)\,.
\end{equation}
\end{lem}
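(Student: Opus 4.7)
The plan is to exploit the invertibility of $\L$ to invert the defining relation $\c = \L\gamma + \M + \fg\tau$, yielding the pointwise identity
\[
\gamma = \L^{-1}\bigl(\c - \M - \fg\tau\bigr),
\]
so that $|\gamma(s)| \le |\L^{-1}|\,|\c(s) - \M - \fg(s)\tau(s)|$ at every $s$. Squaring and applying a Young-type inequality, together with $|\fg| \le c_0$ from Assumption \ref{Af}, produces a pointwise estimate of the form
\[
|\gamma|^2 \le |\L^{-1}|^2\bigl(a|\c|^2 + b(|\M|^2 + c_0^2)\bigr)
\]
for explicit numerical constants $a,b$. Integrating this over $\gamma$ and invoking the uniform bound $\int_\gamma |\c|^2 ds \le 2\SH(\gamma_0)$ from Lemma \ref{LMaprioriL2controlofK} together with the upper length bound $L(\gamma) \le \lambda^{-1}\SH(\gamma_0)$ from Lemma \ref{LMaprioricontrolofL} controls $\int_\gamma |\gamma|^2 ds$ by a quantity of the shape $C|\L^{-1}|^2 \SH(\gamma_0)\bigl(1 + (|\M|^2+c_0^2)/\lambda\bigr)$.

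To promote this integral bound to the required pointwise bound, I would work componentwise. Writing $\gamma^i = \IP{\gamma}{e_i}$, Cauchy--Schwarz supplies for each $i$ a point $s_i^*$ with $|\gamma^i(s_i^*)|^2 \le L^{-1}\int_\gamma (\gamma^i)^2 ds$, and the closedness of $\gamma$ combined with $|\tau^i| \le 1$ gives the oscillation estimate $|\gamma^i(s) - \gamma^i(s_i^*)| \le L/2$. Summing the coordinate estimates via $|\gamma(s)| \le \sum_i |\gamma^i(s)|$ yields precisely the factor of $n$ appearing in the statement. Applying finally the lower length bound $L \ge 2\pi^2/\SH(\gamma_0)$ from Lemma \ref{LMaprioricontrolofL} to the mean term and the upper length bound to the oscillation term gives a pointwise estimate whose two summands correspond, respectively, to the $\frac{1}{\pi}|\L^{-1}|\sqrt{\cdots}$ and $\frac{1}{\lambda}$ contributions in the claimed inequality.

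The main obstacle is purely one of bookkeeping: the numerical constants must be tuned so that the radicand comes out to exactly $(|\M|^2+2c_0^2)\lambda^{-1} + 4$. This is delicate because there is freedom in the Young's inequality parameter used to expand $|\c - \M - \fg\tau|^2$, and the split between the mean and oscillation contributions must be made in the sharpest admissible way. The conceptual content of the proof, however, is transparent: invertibility of $\L$ transfers the $L^2$-control of the spontaneous curvature guaranteed by Lemma \ref{LMaprioriL2controlofK} into $L^2$-control of the position vector, and the closedness of $\gamma$, via componentwise oscillation, upgrades this to the desired uniform bound.
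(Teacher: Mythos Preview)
Your approach is correct and essentially identical to the paper's. The paper also extracts pointwise control of $|\gamma|^2$ from $|\c|^2$ via the invertibility of $\L$ (it writes the chain of inequalities starting from $2|\c|^2 \ge \frac12|\L^{-1}|^{-2}|\gamma|^2 - |\M|^2 - 2\fg^2$ rather than inverting directly, but this is algebraically equivalent to your $\gamma = \L^{-1}(\c - \M - \fg\tau)$), integrates to obtain the $L^2$ bound on $\gamma$, and then upgrades to a pointwise bound by the same componentwise mean-plus-oscillation argument, summing over the $n$ coordinates and applying the two-sided length bounds from Lemma~\ref{LMaprioricontrolofL}. The only cosmetic difference is that the paper controls the mean via $\frac{1}{L}\int_\gamma \gamma^i\,ds$ and Cauchy--Schwarz on the $L^1$ norm, whereas you invoke the existence of a point achieving at most the root-mean-square; both yield the same structure, and as you correctly note the remaining work is purely tracking constants.
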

\begin{proof}
We first briefly note that the assumptions of this lemma allow us to write $\c(s) = \L\gamma(s) + \M +
\fg(s)\tau(s)$ for an invertible $(n\times n)$ constant matrix $\L$ and a constant vector $\M$,
where $\L\not\equiv0$.
Then
\begin{align*}
2|\c|^2
 &= 2|\L\gamma+\M|^2 + 2\fg^2 + 4\fg\IP{\L\gamma+\M}{\tau}
\\
 &\ge |\L\gamma+\M|^2 - 2\fg^2
\\
 &= |\L\gamma|^2 + 2\IP{\M}{\L\gamma} + |\M|^2 - 2\fg^2
\\
 &\ge \frac12|\L\gamma|^2 - |\M|^2 - 2\fg^2
\\
 &\ge \frac12|\L^{-1}|^{-2}|\gamma|^2 - |\M|^2 - 2\fg^2\,,
\end{align*}
which, combined with Lemma \ref{LMaprioriL2controlofK}, yields
\begin{align*}
\frac12|\L^{-1}|^{-2}
\int_\gamma |\gamma|^2 ds
  - |\M|^2 L(\gamma)
  - 2\int_\gamma |\fg|^2ds
 \le
 2\int_\gamma |\c|^2 ds
 \le 4\SH(\gamma_0).
\end{align*}
Rearranging and using Assumption \ref{Af} gives
\begin{equation}
\label{EQL2boundforposvec}
\int_\gamma |\gamma|^2 ds
\le
2 |\L^{-1}|^2 \big(
                    (|\M|^2+2c_0^2) L(\gamma)
                  + 4 \SH(\gamma_0)
              \big)\,.
\end{equation}
Take $\{e_i\}_{i=1}^n$ to be the standard orthonormal basis of $\R^n$.
Then \eqref{EQL2boundforposvec} implies
\begin{equation}
\label{EQL1boundforposvec}
\bigg(\int_\gamma |\IP{\gamma}{e_i}|\,ds\bigg)^2
 \le
 2L(\gamma) |\L^{-1}|^2 \big(
                    (|\M|^2+2c_0^2) L(\gamma)
                  + 4 \SH(\gamma_0)
                        \big)\,.
\end{equation}
Now
\begin{equation*}
\IP{\gamma}{e_i} - \frac1{L(\gamma)}\int_\gamma \IP{\gamma}{e_i}\,ds
\le
\int_\gamma |\IP{\tau}{e_i}|\,ds
\le L(\gamma),
\end{equation*}
so, using \eqref{EQL1boundforposvec},
\begin{align*}
\IP{\gamma}{e_i}
&\le L(\gamma)
 + \frac1{L(\gamma)}\int_\gamma |\IP{\gamma}{e_i}|\,ds
\\
&\le L(\gamma)
 + \frac{\sqrt2}{\sqrt{L(\gamma)}}
|\L^{-1}| \sqrt{
                    (|\M|^2+2c_0^2) L(\gamma)
                  + 4 \SH(\gamma_0)
 }\,.
\end{align*}
Clearly
\begin{equation*}
|\gamma|
\le
  \bigg|\sum_{i=1}^n \IP{\gamma}{e_i}e_i\bigg|
\le nL(\gamma)
 + \frac{n\sqrt2}{\sqrt{L(\gamma)}}
|\L^{-1}| \sqrt{
                    (|\M|^2+2c_0^2) L(\gamma)
                  + 4 \SH(\gamma_0)
 }\,.
\end{equation*}
which, after estimating $L(\gamma)$ with \eqref{EQLMaprioricontrolofLCNCLSN}, is
\eqref{EQLMaprioricontrolofGammaCNCLSN}.

\end{proof}

We are also able to exhibit a-priori control of the position vector in the case where $f$ satisfies Assumption
\ref{ASfproperlike}.
For this we require the family to be continuous, which is of course the case when $\gamma$ is a
generalised Helfrich flow.

\begin{lem}
\label{LMaprioricontrolofGammaf}
Let $\gamma:\S^1\times I\rightarrow\R^n$ be a one-parameter family of $\SH(\gamma_0)$-bounded curves.
Suppose $\ca$ and $f$ fulfil Assumptions \ref{Aca} and \ref{Af}, \ref{ASfproperlike} respectively and
$\lambda>0$.
Then
\begin{equation}
\label{EQLMaprioricontrolofGammafCNCLSN}
|\gamma|
 \le R + (2\lambda)^{-1}\SH(\gamma_0)\,.
\end{equation}
\end{lem}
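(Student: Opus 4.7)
The strategy is proof by contradiction, exploiting the length bound of Lemma~\ref{LMaprioricontrolofL} together with continuity of the family in $t$. Set $R_0 := R + (2\lambda)^{-1}\SH(\gamma_0)$, and suppose for contradiction that there exist $s_0 \in \S^1$ and $t_0 \in I$ with $|\gamma(s_0,t_0)| > R_0$. Because $L(\gamma(\cdot,t)) \le \lambda^{-1}\SH(\gamma_0)$ uniformly in $t$, every curve in the family has Euclidean diameter at most $(2\lambda)^{-1}\SH(\gamma_0) = R_0 - R$, so the triangle inequality forces every point of $\gamma(\cdot,t_0)$ to satisfy $|\gamma| > R$. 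Applying continuity in $t$ (choosing $R$ large enough that $|\gamma_0| \le R_0$, as is built into the assumption) produces a first time $t^* \in I$ with $\max_s |\gamma(s,t^*)| = R_0$; at that instant the diameter bound localises the image $\gamma(\S^1,t^*)$ inside the closed annulus $\{R \le |x| \le R_0\}$.

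On this annulus, Assumption~\ref{ASfproperlike} gives $|\fg(s,t^*)| > \sqrt{3}\SH(\gamma_0)/\pi$ for every $s \in \S^1$. Combined with the Poincar\'e-type lower bound $L(\gamma) \ge 2\pi^2/\SH(\gamma_0)$ of Lemma~\ref{LMaprioricontrolofL}, this yields
\[
\int_\gamma \fg^2\,ds \;\ge\; \frac{3\SH(\gamma_0)^2}{\pi^2}\,L(\gamma) \;\ge\; 6\SH(\gamma_0).
\]
On the other hand, Lemma~\ref{LMaprioriL2controlofK} supplies $\int_\gamma |\c|^2\,ds \le 2\SH(\gamma_0)$. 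Decomposing $\c = [\cz]^\perp + (\fg + \IP{\cz}{\tau})\tau$ with respect to the tangent, so that $|\c|^2 = |[\cz]^\perp|^2 + (\fg + \IP{\cz}{\tau})^2$, and using the elementary inequality $(a+b)^2 \ge \tfrac12 a^2 - b^2$, I convert the upper bound on $\int |\c|^2\,ds$ into an upper bound on $\tfrac12\int_\gamma \fg^2\,ds$ up to the tangential $\cz$-contribution, which is controlled pointwise by $|\cz| \le |\L|R_0 + |\M|$ on the annulus. The factor $\sqrt{3}/\pi$ in Assumption~\ref{ASfproperlike} is calibrated precisely so that the resulting upper bound on $\int_\gamma\fg^2\,ds$ is strictly below $6\SH(\gamma_0)$, producing the contradiction.

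The main obstacle is absorbing the tangential $\cz$-contribution in the final estimate. When $\cz \equiv 0$ one has $|\c|^2 = \fg^2$ and the contradiction $6\SH(\gamma_0) \le 2\SH(\gamma_0)$ is immediate; in the general affine case one exploits the pointwise bound $|\gamma| \le R_0$ available at the candidate time $t^*$ by construction, together with Assumption~\ref{Aca} and, if required, the sharper energy identity
\[
\SH(\gamma) = \tfrac12\int_\gamma |\k|^2\,ds + \int_\gamma \IP{\tau}{\L\tau}\,ds + \tfrac12\int_\gamma |\c|^2\,ds + \lambda L(\gamma)
\]
derived in the proof of Lemma~\ref{LMaprioriL2controlofK}, to ensure that the residual $\cz$-terms do not destroy the numerical balance.
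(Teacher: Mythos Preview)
Your overall shape is right---contradiction via the length bounds and continuity to trap the curve in the annulus $R\le|x|\le R_0$, then use Assumption~\ref{ASfproperlike} to make $\int\fg^2\,ds$ large---but the final numerical step has a real gap.

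You try to bound $\int_\gamma\fg^2\,ds$ from \emph{above} using only $\int_\gamma|\c|^2\,ds\le 2\SH(\gamma_0)$ and the decomposition $|\c|^2=|[\cz]^\perp|^2+(\fg+\IP{\cz}{\tau})^2$. The inequality $(a+b)^2\ge\tfrac12 a^2-b^2$ yields
\[
\tfrac12\int_\gamma\fg^2\,ds \;\le\; \int_\gamma|\c|^2\,ds + \int_\gamma\IP{\cz}{\tau}^2\,ds
\;\le\; 2\SH(\gamma_0) + \int_\gamma|\cz|^2\,ds\,,
\]
and you then propose to estimate $|\cz|\le|\L|R_0+|\M|$ pointwise. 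But this gives $\int_\gamma|\cz|^2\,ds\le(|\L|R_0+|\M|)^2\lambda^{-1}\SH(\gamma_0)$, a quantity depending on $|\L|$, $|\M|$, and~$R$. The constant $\sqrt{3}/\pi$ in Assumption~\ref{ASfproperlike} is \emph{not} calibrated to absorb this: for the contradiction you would need $(|\L|R_0+|\M|)^2<\lambda$, which nothing in the hypotheses guarantees (indeed $R$ may be arbitrarily large). Your appeal to the ``sharper energy identity'' does not fix this, since that identity only reproduces the bound $\int|\c|^2\le 2\SH(\gamma_0)$ already in hand.

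The paper avoids this trap by never trying to squeeze $\int\fg^2$ out of $\int|\c|^2$. Instead it bounds the \emph{full} energy from below: writing $\SH(\gamma)=\tfrac12\int|\k-\cz|^2+\int\fg\IP{\cz}{\tau}+\tfrac12\int\fg^2+\lambda L$, discarding the nonnegative terms, and using the algebraic estimate $\int\fg\IP{\cz}{\tau}\ge-\int|\c|^2$ (which follows from Cauchy--Schwarz together with $\int|\cz|^2\le 2\int|\c|^2-\int\fg^2$), one gets
\[
\SH(\gamma)\;\ge\;-\int_\gamma|\c|^2\,ds+\tfrac12\int_\gamma\fg^2\,ds\,.
\]
Now $\int|\c|^2\le 2\SH(\gamma_0)$ and $\tfrac12\int\fg^2>\tfrac12\cdot\frac{2\pi^2}{\SH(\gamma_0)}\cdot\frac{3\SH(\gamma_0)^2}{\pi^2}=3\SH(\gamma_0)$ combine to give $\SH(\gamma(\cdot,t^*))>\SH(\gamma_0)$, the desired contradiction---with no residual $\cz$-term to absorb. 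The factor $\sqrt{3}/\pi$ is tuned to \emph{this} balance, not yours.
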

\begin{proof}
Clearly we have
\begin{align*}
\int_\gamma |\cz\ts|^2ds
 &=
    \int_\gamma |\c|^2ds
  + \int_\gamma \fg \IP{\cz\ts}{\tau}\,ds
  - \int_\gamma |\fg|^2ds
\\
 &\le 
    \int_\gamma |\c|^2ds
  + \frac12\int_\gamma |\cz\ts|^2ds
  - \frac12\int_\gamma |\fg|^2ds
\intertext{and so}
\int_\gamma |\cz\ts|^2ds
 &\le 
    2\int_\gamma |\c|^2ds
  - \int_\gamma |\fg|^2ds\,.
\end{align*}
This implies
\begin{equation}
\label{EQpfLMaprioricontrolGammafestforinnerprod}
\int_\gamma \fg \IP{\cz\ts}{\tau}\,ds
\ge - \int_\gamma |\fg|\,|\cz\ts|\,ds
\ge - \frac12 \int_\gamma |\fg|^2ds - \frac12\int_\gamma|\cz\ts|^2ds
\ge - \int_\gamma |\c|^2ds\,.
\end{equation}
Using \eqref{EQpfLMaprioricontrolGammafestforinnerprod} we bound the energy from below by
\begin{align*}
\SH(\gamma)
 &=
    \frac12\int_\gamma |\k-\cz\ts-\fg\tau|^2ds
  + \lambda L(\gamma)
\\
 &=
    \frac12\int_\gamma |\k-\cz\ts|^2ds
  + \int_\gamma \fg\IP{\cz\ts}{\tau}\,ds
  + \frac12\int_\gamma |\fg|^2ds
  + \lambda L(\gamma)
\\
 &\ge
  - \int_\gamma |\c|^2ds
  + \frac12\int_\gamma |\fg|^2ds\,.
\end{align*}
Let us assume that \eqref{EQLMaprioricontrolofGammafCNCLSN} does not hold for some $t$.
By continuity of the family $\gamma$, there is a smallest $t^*<t$ such that $|\gamma(s,t^*)| \ge R$
for all $s$ and there is an $s^*$ with the property that $|\gamma(s^*,t^*)| = R$.  Since
$\gamma(\cdot,t^*)$ is a regular curve, Lemma \ref{LMaprioricontrolofL} implies that
$\gamma(\cdot,t^*)$ is contained in the closure of the annulus $B_{R+(2\lambda)^{-1}\SH(\gamma_0)}
\tilde B_R$; that is,
\begin{equation}
\label{EQpfLMaprioricontrolGammafforcontraeq}
R \le |\gamma(\cdot,t^*)|
\le R+2\lambda^{-1}\SH(\gamma_0).
\end{equation}
Assumption \ref{ASfproperlike} thus implies
\begin{equation*}
|\fg(s,t^*)| > \frac{\sqrt{3}\SH(\gamma_0)}{\pi}\quad\text{for all $s$}\,.
\end{equation*}
Applying Lemmas \ref{LMaprioriL2controlofK} and \ref{LMaprioricontrolofL} we obtain
\begin{align*}
\SH(\gamma(\cdot,t^*))
 &>
  - 2\SH(\gamma_0) + \frac12L(\gamma) \frac{3\big(\SH(\gamma_0)\big)^2}{\pi^2}
\\
 &\ge
  - 2\SH(\gamma_0)
  + \Big(\frac{2\pi^2}{\SH(\gamma_0)}\Big)\frac{3\big(\SH(\gamma_0)\big)^2}{2\pi^2}
\\
 &\ge \SH(\gamma_0)\,,
\end{align*}
which is a contradiction (note that the first inequality is strict).
Therefore \eqref{EQpfLMaprioricontrolGammafforcontraeq} does not hold and for any given $t$ there exists a
$p\in\S^1$ such that $|\gamma(t,p)| \le R$.  Since $\gamma$ is closed and by Lemma \ref{LMaprioricontrolofL}
we have $L(\gamma) \le \lambda^{-1}\SH(\gamma_0)$ this implies $|\gamma(t,\cdot)| \le R +
(2\lambda)^{-1}\SH(\gamma_0)$ and we are finished.
\end{proof}


With the help of Lemmas \ref{LMaprioricontrolofGamma} and \ref{LMaprioricontrolofGammaf} above, we are able to
obtain convergence the flow and uniqueness of its limit.

In the most general case, this is not possible.
We are only able to control the length of the position vector uniformly on compact subsets
of $I$, and only in the case where the $\SH(\gamma_0)$-bounded family is a generalised Helfrich flow.
This will allow us to obtain the global existence statement of Theorem \ref{TMmt} in full generality, but its
non-uniformality will become a (necessary, see Example \ref{EXtranslatingcircle}) obstacle when we
investigate the asymptotic properties of generalised Helfrich flows.

The strategy we use to obtain the estimate on compact subsets of $I$ is to use the definition of the flow
\eqref{EQflow} and apply the a-priori estimates (proved in the next section) for all derivatives of curvature
to bound $|\partial_t\gamma|$ directly.
Before this can happen however, we must bound $\c$ in $L^\infty$, and since $\cz(s) = \L\gamma(s) + \M$, we
again encounter the problem of bounding the position vector $\gamma$.
In order to circumvent this possible circularity, we shall directly obtain $L^\infty$ control of
$\c$.  This is provided by the following lemma.

\begin{lem}
\label{LMaprioricontrolofC0}
Let $\gamma:\S^1\times I\rightarrow\R^n$ be a one-parameter family of $\SH(\gamma_0)$-bounded curves.
Suppose $\ca$ and $f$ fulfil Assumptions \ref{Aca} and \ref{Af} respectively and $\lambda>0$.
Then,
\begin{equation}
\label{EQLMaprioricontrolofC0CNCLSN}
|\c|
 \le n\lambda^{-1}\SH(\gamma_0)\sqrt{
   8\pi^{-2}\lambda^2 + 3|\L|^2 + 6\lambda (c_0)^2 + 3(c_1)^2
                 }\,.
\end{equation}
\end{lem}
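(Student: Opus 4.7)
The plan is to control $|\c|$ in $L^\infty$ directly, bypassing any estimate on the position vector $\gamma$ itself. The strategy rests on three ingredients already available: the uniform $L^2$-bound on $\c$ from Lemma \ref{LMaprioriL2controlofK}, a pointwise upper bound for $|\partial_s\c|$ coming from the identity \eqref{EQpartialscz}, and the two-sided length bound from Lemma \ref{LMaprioricontrolofL}. Feeding these into a standard Sobolev-type inequality on closed curves yields the claim.

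To begin, I would fix an orthonormal basis $\{e_i\}_{i=1}^n$ of $\R^n$ and work with the scalar components $\c^i := \IP{\c}{e_i}$. From \eqref{EQpartialscz} one has $\partial_s\c = \dca(\tau) + \df(\tau)\tau + \fg\k$, and Assumptions \ref{Aca} and \ref{Af} give the pointwise estimate $|\partial_s\c| \le |\L| + c_1 + c_0|\k|$. Squaring, integrating, and invoking Lemma \ref{LMaprioriL2controlofK} together with the upper length bound $L(\gamma) \le \lambda^{-1}\SH(\gamma_0)$ from Lemma \ref{LMaprioricontrolofL}, I obtain a uniform bound of the shape
\[
\vn{\partial_s\c^i}_{L^2}^2 \;\le\; \vn{\partial_s\c}_{L^2}^2 \;\le\; \lambda^{-1}\SH(\gamma_0)\bigl(3|\L|^2 + 3c_1^2 + 6\lambda c_0^2\bigr).
\]
The key point here is that $\partial_s\c$ contains no direct $\gamma$-dependence — every term on the right-hand side is controlled purely by $|\L|$, $c_0$, $c_1$ and $|\k|$, the last of which is handled in $L^2$ by Lemma \ref{LMaprioriL2controlofK}.

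Next, I apply a standard Sobolev-type inequality on closed curves to each component $\c^i$. Choose $s_0$ where $|\c^i|$ attains its minimum; then pigeonhole gives $\c^i(s_0)^2 \le L^{-1}\vn{\c^i}_{L^2}^2$, while the fundamental theorem of calculus plus Cauchy--Schwarz yields $(\c^i(s) - \c^i(s_0))^2 \le L\vn{\partial_s\c^i}_{L^2}^2$. Hence
\[
\sup_{\S^1}|\c^i|^2 \;\le\; 2L^{-1}\vn{\c^i}_{L^2}^2 + 2L\vn{\partial_s\c^i}_{L^2}^2.
\]
Substituting the length bounds $L^{-1} \le \SH(\gamma_0)/(2\pi^2)$ and $L \le \lambda^{-1}\SH(\gamma_0)$, together with the componentwise inequality $\vn{\c^i}_{L^2}^2 \le \vn{\c}_{L^2}^2 \le 2\SH(\gamma_0)$, and the $L^2$-bound on $\partial_s\c^i$ derived above, produces a uniform bound on $\sup|\c^i|$ in terms of the data appearing on the right of \eqref{EQLMaprioricontrolofC0CNCLSN}.

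Finally, combining the components via $|\c|^2 = \sum_i |\c^i|^2$ (and absorbing dimensional factors into the asserted coefficient $n$) yields \eqref{EQLMaprioricontrolofC0CNCLSN}. There is no serious obstacle; the proof is essentially bookkeeping of constants once the three ingredients — the $L^2$-bound on $\c$, the identity for $\partial_s\c$, and the two-sided length bound — are assembled. The slight subtlety worth emphasising is that we must avoid controlling $\c$ through a bound on $\gamma$, which is not available in general (cf.\ Example \ref{EXtranslatingcircle}); this is precisely what \eqref{EQpartialscz} enables us to do.
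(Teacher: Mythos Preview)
Your proposal is correct and follows essentially the same route as the paper: work componentwise with $\c^i=\IP{\c}{e_i}$, use the identity \eqref{EQpartialscz} to bound $\partial_s\c$ pointwise by $|\L|+c_1+c_0|\k|$, and then combine the $L^2$-bound on $\c$ from Lemma~\ref{LMaprioriL2controlofK} with the two-sided length bound of Lemma~\ref{LMaprioricontrolofL} via a mean-value/Sobolev argument. The only cosmetic difference is that the paper compares each component with its \emph{mean} and bounds the oscillation by the $L^1$-norm of $\partial_s\c$ (then squares), whereas you compare with the \emph{minimum} and pass through the $L^2$-norm via Cauchy--Schwarz; both yield the same constants up to harmless factors.
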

\begin{proof}
We use a strategy similar to that of the previous lemma.
Let $\{e_i\}_{i=1}^n$ be the standard basis of $\R^n$.
Since
\begin{align*}
\bigg|\IP{\c}{e_i} - \frac1{L(\gamma)}\int_\gamma \IP{\c}{e_i}\,ds\bigg|^2
 &\le \bigg(\int_\gamma \big| \L\tau + \df(\tau)\,\tau + \fg\k \big|\, ds\bigg)^2
\\
 &\le
       3L(\gamma)^2\big( |\L|^2 + (c_1)^2 \big)
     + 3(c_0)^2L(\gamma)\int_\gamma |\k|^2ds\,
\end{align*}
we may use Lemmas \ref{LMaprioriL2controlofK} and \ref{LMaprioricontrolofL} to estimate
\begin{align*}
\big|\IP{\c}{e_i}\big|^2
 &\le \frac2{L(\gamma)}\int_\gamma |\c|^2ds
     + 3L(\gamma)^2\big( |\L|^2 + (c_1)^2 \big)
     + 3(c_0)^2L(\gamma)\int_\gamma |\k|^2ds\,
\\
 &\le
      \frac2{\pi^2}\SH(\gamma_0)^2
     + \frac{3}{\lambda^2}\SH(\gamma_0)^2\big( |\L|^2 + (c_1)^2 \big)
     + \frac{6(c_0)^2}{\lambda}\SH(\gamma_0)^2\,.
\end{align*}
This is, upon rearrangement, the claim of the lemma.
\end{proof}

Clearly $|\cz\ts| \le |\c| + |\fg|$, and so one easily obtains an analogue of
\eqref{EQLMaprioricontrolofC0CNCLSN} for $\cz$.
Alternatively, one may carry out the argument of Lemma \ref{LMaprioricontrolofC0} above for $\cz$ to obtain
the following bound, which is slightly better than \eqref{EQLMaprioricontrolofC0CNCLSN} as it does not depend
on $c_1$.

\begin{lem}
\label{LMaprioricontrolofC}
Let $\gamma:\S^1\times I\rightarrow\R^n$ be a one-parameter family of $\SH(\gamma_0)$-bounded curves.
Suppose $\ca$ and $f$ fulfil Assumptions \ref{Aca} and \ref{Af} respectively and $\lambda>0$.
Then,
\begin{equation*}
\label{EQLMaprioricontrolofCCNCLSN}
|\cz\ts|
 \le 2n\SH(\gamma_0)\sqrt{
   \big[c_0/\SH(\gamma_0)\big]^{2} + \pi^{-2} + |\L|^2(2\lambda)^{-2}
                 }\,.
\end{equation*}
\end{lem}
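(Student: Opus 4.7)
The plan is to mirror the proof of Lemma \ref{LMaprioricontrolofC0} verbatim, with $\c$ replaced by $\cz$ throughout. The key observation — and the source of the cleaner bound — is that because $\ca(x) = \L x + \M$ is affine, we have $\partial_s\cz = \L\tau$ with no contributions from either $\df$ or $\k$. This will eliminate both the $(c_1)^2$ and $(c_0)^2$-weighted curvature terms that appeared in the oscillation estimate for $\c$.

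First, I fix the standard orthonormal basis $\{e_i\}_{i=1}^n$ of $\R^n$ and estimate each component $\IP{\cz}{e_i}$ as the sum of its mean and oscillation along $\gamma$. For the oscillation,
\[
\bigg|\IP{\cz}{e_i} - \frac{1}{L(\gamma)}\int_\gamma \IP{\cz}{e_i}\,ds\bigg|
\le \int_\gamma \big|\IP{\L\tau}{e_i}\big|\,ds
\le |\L|\, L(\gamma),
\]
while the mean is controlled via Cauchy--Schwarz by
\[
\bigg|\frac{1}{L(\gamma)}\int_\gamma \IP{\cz}{e_i}\,ds\bigg|^2
\le \frac{1}{L(\gamma)}\int_\gamma |\cz\ts|^2\,ds.
\]

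Next, I produce an $L^2$-bound on $\cz$ itself. From $\c = \cz + \fg\tau$, the elementary inequality
$|\cz\ts|^2 \le 2|\c|^2 + 2|\fg|^2$ combined with Lemma \ref{LMaprioriL2controlofK} and Assumption \ref{Af} yields
\[
\int_\gamma |\cz\ts|^2\,ds \le 4\SH(\gamma_0) + 2(c_0)^2 L(\gamma),
\]
and Lemma \ref{LMaprioricontrolofL} bounds $L(\gamma) \le \lambda^{-1}\SH(\gamma_0)$. Applying $|a+b|^2 \le 2|a|^2 + 2|b|^2$ to $\IP{\cz}{e_i} = \text{mean} + \text{osc}$, substituting the two estimates above, and using Lemma \ref{LMaprioricontrolofL} once more ($L(\gamma) \ge 2\pi^2/\SH(\gamma_0)$) to control the $L(\gamma)^{-1}$ factor, gives a pointwise bound on each $|\IP{\cz}{e_i}|$ in terms of $c_0$, $|\L|$, $\lambda$ and $\SH(\gamma_0)$ alone. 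Summing via $|\cz\ts| \le \sum_i |\IP{\cz}{e_i}|$ produces the factor of $n$ out front, and algebraic rearrangement absorbs the constants into the stated form $\sqrt{[c_0/\SH(\gamma_0)]^2 + \pi^{-2} + |\L|^2(2\lambda)^{-2}}$.

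The only real subtlety is bookkeeping: matching the precise constants in the statement requires choosing the Young-type splittings carefully so that the three terms under the square root come out cleanly weighted. This is purely algebraic and presents no genuine obstacle; conceptually, the argument is strictly easier than that of Lemma \ref{LMaprioricontrolofC0} because $\partial_s\cz$ has no $\fg\k$ term, so the troublesome $\int |\k|^2$ and $(c_1)^2$ contributions never enter.
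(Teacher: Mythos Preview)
Your proposal is correct and follows the paper's own proof essentially verbatim: both bound the oscillation of each component via $\partial_s\cz=\L\tau$, control the mean by Cauchy--Schwarz and the $L^2$-bound $\int_\gamma|\cz\ts|^2\,ds\le 2\int_\gamma|\c|^2\,ds+2c_0^2L(\gamma)$, and then invoke Lemmas~\ref{LMaprioriL2controlofK} and~\ref{LMaprioricontrolofL}. The paper is slightly terser (it jumps directly to the componentwise bound $|\IP{\cz}{e_i}|^2\le 4\pi^{-2}\SH(\gamma_0)^2+4c_0^2+|\L|^2\lambda^{-2}\SH(\gamma_0)^2$), but the ingredients and structure are identical.
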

\begin{proof}
Since
\begin{align*}
\bigg|\IP{\cz}{e_i} - \frac1{L(\gamma)}\int_\gamma \IP{\cz}{e_i}\,ds\bigg|^2
 &\le L(\gamma)^2|\L|^2
\end{align*}
we may use Lemmas \ref{LMaprioriL2controlofK} and \ref{LMaprioricontrolofL} to estimate
\begin{align*}
\big|\IP{\cz}{e_i}\big|^2
 &\le
      \frac4{\pi^2}\SH(\gamma_0)^2
     + 4(c_0)^2
     + \frac{|\L|^2}{\lambda^2}\SH(\gamma_0)^2\,.
\end{align*}
This yields the claim of the lemma.
\end{proof}


\section{A-priori estimates for the generalised Helfrich flow}

Theorem \ref{STE} justifies the use of smooth calculations in the derivation of our estimates.
When we use the expression ``Let $\gamma:\S^1\times[0,T)\rightarrow\R^n$ be a generalised Helfrich flow'' we are
invoking Theorem \ref{STE}.
Since along any generalised Helfrich flow we have $\gamma(\cdot,t)\in C^\infty$ and
$$\rd{}{t}\SH(\gamma) = - \int_\gamma |\BH(\gamma)|^2ds \le 0,$$ the one-parameter family $\gamma$
is continuous and $\SH(\gamma_0)$-bounded, and all the results of Section 3 apply.
Our main goal now is to use these to prove a-priori estimates for all derivatives of curvature.
Let us define
\begin{equation*}
V_k := 
 -\nabla^2_s\k -\frac12|\k|^2\k + \lambda\k
\end{equation*}
and
\begin{align*}
V_c &:= 
     \IP{\c}{\tau}\nabla_s\k
   + \nabla_s^2\cz
   + \frac12|\c|^2\k
   + \IP{\cz}{\k}\k
   - \fg\IP{\cz}{\tau}\k
\\*
&\quad
   + \bigg[(\dca)^T(\k-\c)
      - \IP{\c}{\tau}(\df)^T
      + \df(\tau)\,\cz
      + \fg\dca(\tau)
     \bigg]^\perp
\\*
&\quad
   + \df(\tau)\,\k
   + \IP{\dca(\tau)}{\tau}\k
\end{align*}
so that
\begin{equation*}
V := \partial_t\gamma = V_k+V_c\,.
\end{equation*}
It follows from \eqref{EQfvpartialtk} that
\begin{align}
\label{EQnablatk}
\nabla_t\k
 &= \nabla^2_sV + \IP{\k}{V}\k
\\
 &=
  \nabla^2_sV_k + \IP{\k}{V_k}\k
+ \nabla^2_sV_c + \IP{\k}{V_c}\k
\notag\\
 &=
  \Big[- \nabla^4_s\k
  + \lambda\nabla^2_s\k
  + (P_3^2+P_5^0+\lambda P_3^0)(\k)
  \Big]
  +
  \Big[
  \nabla^2_sV_c + \IP{\k}{V_c}\k
  \Big].
\notag
\end{align}
Equation (2.8) from \cite{DKS02} in our setting reads ($\phi$ a normal vector field)
\begin{equation}
\label{EQinterchange}
\nabla_t\nabla_s\phi
 = 
    \nabla_s\nabla_t\phi + \IP{\k}{V}\nabla_s\phi + \IP{\k}{\phi}\nabla_sV - \IP{\nabla_sV}{\phi}\k.
\end{equation}
Taking $\phi = \nabla_s^m\k$ in \eqref{EQinterchange} we obtain
\begin{equation}
\label{EQinterchangegradmk}
\nabla_t\nabla_s^{m+1}\k
 = 
    \nabla_s\nabla_t\nabla_s^m\k
 + \IP{\k}{V}\nabla_s^{m+1}\k + \IP{\k}{\nabla_s^m\k}\nabla_sV - \IP{\nabla_sV}{\nabla_s^m\k}\k.
\end{equation}
One may use \eqref{EQinterchangegradmk} with an induction argument to prove
\begin{equation}
\nabla_t\nabla_s^m\k
 = 
    -\nabla^{m+4}_s\k
    +\lambda\nabla_s^{m+2}\k
  + (P_3^{m+2}+\lambda P_3^m+P_5^m)(\k)
  + \nabla_s^{m+2}V_c
  + P_{2,1}^m(\k;V_c)\,.
\label{EQgeneralevonablask}
\end{equation}



The following lemma provides uniform control of $\vn{\nabla_s\k}_{L^2}$.

\begin{lem}
\label{LMaprioriL2controlofNablaK}
Let $\gamma:\S^1\times[0,T)\rightarrow\R^n$ be a generalised Helfrich flow.
Suppose $\ca$ and $f$ fulfil Assumptions \ref{Aca} and \ref{Af} respectively and $\lambda>0$.
Then there exists an absolute constant $k_1$ such that
\begin{equation}
\label{EQLMaprioriL2controlofNablaKCNCLSN}
\int_\gamma |\nabla_s\k|^2ds \le \int_\gamma |\nabla_s\k|^2ds\bigg|_{t=0} + k_1.
\end{equation}
The constant $k_1$ depends only on $\SH(\gamma_0)$, $\lambda$, $n$, $|\L|$, $c_0$, $c_1$ and $c_2$.
\end{lem}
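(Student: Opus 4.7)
The plan is to differentiate $\int_\gamma |\nabla_s\k|^2 ds$ along the flow, extract a dissipative principal term after integration by parts, and close a differential inequality by applying Gagliardo--Nirenberg interpolation for curves together with the uniform estimates of Section 3. Since $V = \partial_t\gamma = -\BH$ is a normal vector field, the variation formula \eqref{EQfvintevo} applied with $h = |\nabla_s\k|^2$ and $\varphi=0$ yields
\begin{equation*}
\rd{}{t}\int_\gamma |\nabla_s\k|^2\, ds
 = 2\int_\gamma \IP{\nabla_t\nabla_s\k}{\nabla_s\k}\,ds
 - \int_\gamma |\nabla_s\k|^2\IP{V}{\k}\,ds.
\end{equation*}
Substituting \eqref{EQgeneralevonablask} with $m=1$ and integrating by parts (valid since all pairings are between normal vector fields) extracts the principal negative contribution $-2\int_\gamma|\nabla_s^3\k|^2 ds - 2\lambda\int_\gamma|\nabla_s^2\k|^2 ds$, together with remainder integrals involving $(P_3^3 + \lambda P_3^1 + P_5^1)(\k)$, $\nabla_s^3 V_c$, $P_{2,1}^1(\k;V_c)$ and $\IP{V}{\k}\,|\nabla_s\k|^2$.

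The pure curvature $P$-terms are handled by the standard Gagliardo--Nirenberg interpolation for closed curves (in the spirit of \cite[Lemma~2.4]{DKS02}) combined with Lemma \ref{LMaprioriL2controlofK} (providing $\vn{\k}_{L^2}^2 \le 2\SH(\gamma_0)$) and the bilateral length bounds of Lemma \ref{LMaprioricontrolofL}: after redistributing derivatives by integration by parts, each such integral is controlled by $\varepsilon\int_\gamma|\nabla_s^3\k|^2 ds + C(\varepsilon)$ for arbitrary $\varepsilon > 0$. For the $V_c$-terms I would expand using $\partial_s\cz = \L\tau$ and $\partial_s\fg = \df(\tau)$, so that every derivative of $\cz$ produces a factor of $\L$ acting on some derivative of $\tau$ (bounded by $|\L|$ times $|\nabla_s^j\k|$ for an appropriate $j$), and every derivative of $\fg$ produces a higher derivative of $f$ bounded in $L^\infty$ by Assumption \ref{Af}. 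Combined with the $L^\infty$ bounds on $\c$ and $\cz$ from Lemmas \ref{LMaprioricontrolofC0} and \ref{LMaprioricontrolofC}, each arising integral reduces to the form $\int_\gamma P_\nu^\mu(\k) * \nabla_s\k\, ds$ with coefficients depending only on $\SH(\gamma_0), \lambda, n, |\L|, c_0, c_1, c_2$, and the same Gagliardo--Nirenberg argument bounds it by $\varepsilon\int_\gamma|\nabla_s^3\k|^2 ds + C(\varepsilon)$. The observation highlighted in the introduction is essential at this point: because the pairing takes place against the normal field $\nabla_s\k$, only the normal component of the commutator $[\nabla_s,\L]$ ever appears, and this component is pointwise of order zero in derivatives of $\k$.

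Combining the above and choosing $\varepsilon$ sufficiently small yields
\begin{equation*}
\rd{}{t}\int_\gamma |\nabla_s\k|^2\, ds + \int_\gamma |\nabla_s^3\k|^2\, ds \le C,
\end{equation*}
with $C$ depending only on $\SH(\gamma_0), \lambda, n, |\L|, c_0, c_1, c_2$. A final Gagliardo--Nirenberg estimate of the form $\int_\gamma|\nabla_s\k|^2 ds \le C'\vn{\nabla_s^3\k}_{L^2}^{1/3}\vn{\k}_{L^2}^{5/3} + C''L(\gamma)^{-2}\vn{\k}_{L^2}^2$, together with the $L^2$ bound on $\k$ and the length bounds, shows that $\int_\gamma|\nabla_s^3\k|^2 ds$ dominates $C$ whenever $\int_\gamma|\nabla_s\k|^2 ds$ exceeds an absolute threshold, so that standard ODE comparison produces the bound \eqref{EQLMaprioriL2controlofNablaKCNCLSN} with $k_1$ depending only on the stated quantities. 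The main technical obstacle is the bookkeeping for $\nabla_s^3 V_c$: since $V_c$ is formally of order two in derivatives of $\k$ (through $\nabla_s^2\cz$ and $\IP{\c}{\tau}\nabla_s\k$), the term $\nabla_s^3 V_c$ is of formal order five, matching the dissipation in top order, and it is precisely the normal-commutator observation that permits every such top-order contribution to be rewritten modulo lower-order terms and absorbed, rather than leaving a non-absorbable remainder of the same size as the dissipation itself.
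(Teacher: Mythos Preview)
Your outline is correct and follows essentially the same route as the paper: compute the evolution of $\int_\gamma|\nabla_s\k|^2\,ds$ via \eqref{EQgeneralevonablask}, extract the dissipation $-2\vn{\nabla_s^3\k}_{L^2}^2-2\lambda\vn{\nabla_s^2\k}_{L^2}^2$, integrate the $V_c$ contributions by parts down to $\int_\gamma\IP{\nabla_s^3\k}{\nabla_s V_c}\,ds$, and interpolate everything against the good term using \eqref{EQDKSinterpfork}/\eqref{EQstrongerDKSinterpfork} together with the $L^\infty$ bounds on $\c,\cz$ from Lemmas \ref{LMaprioricontrolofC0}--\ref{LMaprioricontrolofC}.

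Two small corrections. First, your order count is off: since $\cz=\L\gamma+\M$ one has $\nabla_s^2\cz=(\L\k)^\bot-\IP{\tau}{\L\tau}\k$, which is \emph{zeroth} order in derivatives of $\k$; the genuinely highest-order piece of $V_c$ is $\IP{\c}{\tau}\nabla_s\k$, so $V_c$ is first order and the resulting top term $\IP{\c}{\tau}\IP{\nabla_s^3\k}{\nabla_s^2\k}$ is strictly subcritical rather than ``matching the dissipation''. The normal-commutator observation is still relevant (it keeps the $\L$-distorted pieces from growing in order), but it is more decisive in the higher-order Lemma \ref{LMhigherorderbounds} once $\vn{\k}_{L^\infty}$ is in hand; here the paper simply expands $\nabla_sV_c$ explicitly and estimates. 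Second, your closing Gagliardo--Nirenberg exponents should be $2/3$ and $4/3$; the paper instead uses the linear interpolation \eqref{EQsimpleinterp} to get $\rd{}{t}\int|\nabla_s\k|^2\,ds\le c-\int|\nabla_s\k|^2\,ds$ directly, which is cleaner and yields the stated form of $k_1$ immediately.
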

\begin{proof}
Using the interchange formula \eqref{EQinterchange} and the normal evolution of the curvature
\eqref{EQnablatk} we compute
\begin{align*}
\nabla_t\nabla_s\k
 &= 
    \nabla_s\nabla_t\k
 + \IP{\k}{V}\nabla_s\k + \IP{\k}{\k}\nabla_sV - \IP{\nabla_sV}{\k}\k
\\
 &= 
    \nabla_s\big(\nabla_s^2V + \IP{\k}{V}\k\big)
 + \IP{\k}{V}\nabla_s\k + |\k|^2\nabla_sV - \IP{\nabla_sV}{\k}\k
\\
 &= 
    -\nabla_s^5\k + \lambda\nabla_s^3\k + (P_3^3+\lambda P_3^1+P_5^1)(\k)
\\&\quad
 + \nabla_s^3V_c + \nabla_s\big(\IP{\k}{V_c}\k\big)
 + \IP{\k}{V_c}\nabla_s\k + |\k|^2\nabla_sV_c - \IP{\nabla_sV_c}{\k}\k
\\
 &= 
    -\nabla_s^5\k + \lambda\nabla_s^3\k + (P_3^3+\lambda P_3^1+P_5^1)(\k)
\\&\quad
 + \nabla_s^3V_c
 + \nabla_s\big(|\k|^2V_c\big)
 - 2\IP{\nabla_s\k}{\k}V_c
 + \IP{\nabla_s\k}{V_c}\k
 + 2\IP{\k}{V_c}\nabla_s\k\,.
\end{align*}
Clearly $\IP{\nabla_s\k}{\partial_t\nabla_s\k} = \IP{\nabla_s\k}{\nabla_t\nabla_s\k}$.
Using this and \eqref{EQfvintevo} we compute
\begin{align}
\rd{}{t}&\int_\gamma |\nabla_s\k|^2ds
+ 2\int_\gamma |\nabla_s^{3}\k|^2ds
+ 2\lambda\int_\gamma |\nabla_s^2\k|^2ds
\notag\\
 &=
   2\int_\gamma \IP{\nabla_s\k}{
    (P_3^3+\lambda P_3^1+P_5^1)(\k)
  }
  \,ds
\notag\\
 &\quad
 + 2\int_\gamma \IP{\nabla_s\k}{
   \nabla_s^3V_c
 + \nabla_s\big(|\k|^2V_c\big)
  }
  \,ds
\notag\\
 &\quad
 + 2\int_\gamma \IP{\nabla_s\k}{
 - 2\IP{\nabla_s\k}{\k}V_c
 + \IP{\nabla_s\k}{V_c}\k
 + \IP{\k}{V_c}\nabla_s\k\,
  }
  \,ds
\notag\\
 &=
   2\int_\gamma (P_4^{4,2}+\lambda P_4^{2,1}+P_6^{2,1})(\k)
  \,ds
 - 2\int_\gamma \IP{\nabla_s^2\k}{
   \nabla_s^2V_c
 + |\k|^2V_c
  }
  \,ds
\notag\\
 &\quad
 + 2\int_\gamma \IP{\nabla_s\k}{
 - 2\IP{\nabla_s\k}{\k}V_c
 + \IP{\nabla_s\k}{V_c}\k
 + \IP{\k}{V_c}\nabla_s\k\,
  }
  \,ds\,.
\label{EQevogradsinL2}
\end{align}
For the last equality we used integration by parts on the first term to limit the maximum order of
differentiation as follows:
\begin{align*}
\int_\gamma \nabla_s\k*P_3^{3,3}(\k)\, ds
   &=  c\int_\gamma \nabla_s\k*\nabla_s^3\k*\k*\k\, ds
   + \int_\gamma P_4^{4,2}(\k)\, ds
\\ &= -c\int_\gamma \nabla^2_s\k*\big(\nabla_s^2\k*\k*\k+2\nabla_s\k*\nabla_s\k*\k\big)\, ds
   + \int_\gamma P_4^{4,2}(\k)\, ds
\\ &=
     \int_\gamma P_4^{4,2}(\k)\, ds\,.
\end{align*}
We recall the interpolation inequality \cite[(2.16)]{DKS02}.
Let $\nu,\mu, k$ be positive integers with $k\le\mu$.
Suppose $\sigma := \frac1k(\mu+\frac12\nu-1) < 2$.
Then for any $\varepsilon > 0$ there exists an absolute constant $c$ such that the inequality
\begin{equation}
\label{EQDKSinterpfork}
\int_\gamma |P_{\nu}^{\mu,k-1}(\k)|\,ds
 \le
   \varepsilon\int_\gamma |\nabla_s^k\k|^2ds
 + c\varepsilon^{-\frac{\sigma}{2-\sigma}}\bigg(\int_\gamma |\k|^2ds\bigg)^{\frac{\nu-\sigma}{2-\sigma}}
 + c\bigg(\int_\gamma |\k|^2ds\bigg)^{\mu+\nu-1}
\end{equation}
holds.
Applying \eqref{EQDKSinterpfork} three times with
$[\nu=4,\mu=4,k=3,\sigma=\frac{5}{3}<2]$,
$[\nu=4,\mu=2,k=3,\sigma=1<2]$, and
$[\nu=6,\mu=2,k=3,\sigma=\frac{4}{3}<2]$,
we obtain the estimate (cf. \cite[Theorem 3.2]{DKS02})
\begin{align*}
2&\int_\gamma (P_4^{4,2}+\lambda P_4^{2,1}+P_6^{2,1})(\k)\,ds
 = 2\int_\gamma (P_4^{4,2}+\lambda P_4^{2,2}+P_6^{2,2})(\k)\,ds
\\
 &\le 6\varepsilon\int_\gamma|\nabla_s^3\k|^2ds
 + c(\varepsilon^{-5}+\varepsilon^{-2}+1)\bigg(\int_\gamma |\k|^2ds\bigg)^{7}
 + c\varepsilon^{-1}\bigg(\int_\gamma |\k|^2ds\bigg)^{3}
 + c\bigg(\int_\gamma |\k|^2ds\bigg)^{5}
\end{align*}
where $c$ depends only on $\lambda$.
Choosing $\varepsilon = \frac{1}{24}$ gives
\begin{equation}
\label{EQinterpwillmoreterms1}
2\int_\gamma (P_4^{4,2}+\lambda P_4^{2,1}+P_6^{2,1})(\k)\,ds
 \le \frac14\int_\gamma|\nabla_s^3\k|^2ds
 + c\sum_{i=1}^3\bigg(\int_\gamma |\k|^2ds\bigg)^{2i+1}.
\end{equation}
Combining \eqref{EQinterpwillmoreterms1} with \eqref{EQevogradsinL2}, absorbing $\vn{\nabla_s^3\k}_{L^2}^2$ on
the left and estimating $\vn{\k}_{L^2}^2$ by Lemma \ref{LMaprioriL2controlofK} we have
\begin{align}
\rd{}{t}&\int_\gamma |\nabla_s\k|^2ds
+ \frac{7}{4}\int_\gamma |\nabla_s^{3}\k|^2ds
+ 2\lambda\int_\gamma |\nabla_s^2\k|^2ds
\notag\\
 &\le
 - 2\int_\gamma \IP{\nabla_s^2\k}{
   \nabla_s^2V_c
  }
  \,ds
 + c\int_\gamma \big(
           P_3^{2,2}(\k)*V_c
  \big)\,ds
 + c\,,
\label{EQevogradsinL2afteroneabsorbing}
\end{align}
where $c$ depends only on $\lambda$ and $\SH(\gamma_0)$.
As the first step in the proof of \eqref{EQDKSinterpfork} is to apply a H\"older inequality, we observe that
the following inequality which is slightly stronger than \eqref{EQDKSinterpfork} holds:
\begin{align}
\int_\gamma
   |\nabla_s^{\mu_1}\k|\,
  &|\nabla_s^{\mu_2}\k|\,
   \cdots 
   |\nabla_s^{\mu_\nu}\k|\,ds
\notag\\&
 \le
   \varepsilon\int_\gamma |\nabla_s^k\k|^2ds
 + c\varepsilon^{-\frac{\sigma}{2-\sigma}}\bigg(\int_\gamma |\k|^2ds\bigg)^{\frac{\nu-\sigma}{2-\sigma}}
 + c\bigg(\int_\gamma |\k|^2ds\bigg)^{\mu+\nu-1},
\label{EQstrongerDKSinterpfork}
\end{align}
where $\sum_{i=1}^\nu \mu_i = \mu$, $\mu_i \le k-1$ and other notation is as in \eqref{EQDKSinterpfork}.
In what follows we shall use the notation $\P_\nu^{\mu,k}(\k)$ to denote any linear combination of terms of
the type of the integrand on the left hand side of \eqref{EQstrongerDKSinterpfork} with universal, constant
coefficients.

Since
\begin{align*}
\nabla_s^2\cz
 = \nabla_s\big(\L\tau - \IP{\tau}{\L\tau}\tau\big)
 = (\L\k)^\bot - \IP{\tau}{\L\tau}\k,
\end{align*}
one finds that
\begin{align*}
|V_c| &= 
    \bigg|\IP{\c}{\tau}\nabla_s\k
  + (\L\k)^\bot - \IP{\tau}{\L\tau}\k
  + \frac12|\c|^2\k
  + \IP{\c}{\k}\k
  + \IP{\L\tau}{\tau}\k
  + \df(\tau)\,\k
\\ &\quad
  - \fg\IP{\cz}{\tau}\k
   + \Big[\L^T(\k-\c) \Big]^\bot
   + \df(\tau)\big[\cz\ts\big]^\bot
   + \fg\big[\L\tau\big]^\bot
   - \IP{\c}{\tau}\Big[\big(\df\big)^T\Big]^\bot\bigg|
\\
&\le
       |\c| |\nabla_s\k|
 + \Big(
    6|\L| + \frac12|\c|^2 + |\c||\k| + \big|\df\big| + |\fg||\cz\ts|
   \Big)|\k|
\\
&\quad
 + 2\big(|\c| + |\fg|\big)|\L| + 2\big(|\c| + |\cz\ts|\big)\big|\df\big|\,.
\end{align*}
Therefore
\begin{equation}
|V_c|
 \le c\big(
          |\c| + |\c|^2 + |\L| + c_0|\cz\ts| + c_1
      \big)
      \big(
          |\nabla_s\k| + |\k| + |\k|^2 + |\L| + c_0 + 1
      \big)\,.
\label{EQestzeroorderforVc}
\end{equation}
Using Lemma \ref{LMaprioricontrolofC0}, Lemma \ref{LMaprioricontrolofC}, and estimating $2|\k| \le 1 +
|\k|^2$, we obtain
\begin{equation}
\label{EQzeroorderestforVc}
|V_c|
 \le c\big(1+|\k|^2+|\nabla_s\k|\big)\,,
\end{equation}
where $c$ depends only on $\SH(\gamma_0)$, $\lambda$, $n$, $|\L|$, $c_0$, and $c_1$.
From \eqref{EQzeroorderestforVc} we have
\begin{equation*}
 c\int_\gamma \big(
           P_3^{2,2}(\k)*V_c
  \big)\,ds
\le c\int_\gamma \big(\P_3^{2,2} + \P_5^{2,2} + \P_4^{3,2}\big)(\k)\,ds\,.
\end{equation*}
Applying \eqref{EQstrongerDKSinterpfork} three times with
$[\nu=3,\mu=2,k=3,\sigma=\frac{4+1}{6}<2]$,
$[\nu=5,\mu=2,k=3,\sigma=\frac{4+3}{6}<2]$, 
$[\nu=4,\mu=3,k=3,\sigma=\frac{3+1}{3}<2]$,
and using Lemma \ref{LMaprioriL2controlofK} to control $\vn{\k}_{L^2}$, we estimate
\begin{equation*}
 c\int_\gamma \big(
           P_3^{2,2}(\k)*V_c
  \big)\,ds
\le
   \frac14\int_\gamma |\nabla_s^3\k|^2ds
 + c\,,
\end{equation*}
which upon combination with \eqref{EQevogradsinL2afteroneabsorbing}
 implies
\begin{align}
\rd{}{t}\int_\gamma |\nabla_s\k|^2ds
+ \frac{3}{2}\int_\gamma |\nabla_s^{3}\k|^2ds
+ 2\lambda\int_\gamma |\nabla_s^2\k|^2ds
 &\le
 - 2\int_\gamma \IP{\nabla_s^2\k}{
   \nabla_s^2V_c }
  \,ds
 + c\,
\notag\\
 &=
   2\int_\gamma \IP{\nabla_s^3\k}{
   \nabla_sV_c }
  \,ds
 + c\,.
\label{EQevogradsinL2aftertwiceabsorbing}
\end{align}
We must finally estimate the term $\int_\gamma \IP{\nabla_s^3\k}{ \nabla_sV_c } \,ds$.
We begin by computing $\nabla_sV_c$:
\begin{align*}
\nabla_sV_c
 &=
 \Big(\IP{\c}{\tau} \nabla^2_s\k
 + \big(\IP{\L\tau}{\tau} + \df(\tau) + \IP{\cz}{\k}\big)\nabla_s\k\Big)
 + \nabla^3_s\cz
\\
&\quad
 + \Big(
     \frac12|\c|^2\nabla_s\k
   + \IP{\c}{\L\tau + \df(\tau)\,\tau + \fg\k}\k
   \Big)
\\
&\quad
 + \Big(
     \IP{\cz}{\k}\nabla_s\k
   + \IP{\L\tau}{\k}\k
   + \IP{\cz}{\nabla_s\k}\k
   - |\k|^2\IP{\cz}{\tau}\k
   \Big)
\\
&\quad
 + \Big(
      \IP{\L\tau}{\tau}\nabla_s\k
    + \IP{\L\k}{\tau}\k
    + \IP{\L\tau}{\k}\k
   \Big)
\\
&\quad
 + \Big(
     \df(\tau)\,\nabla_s\k
   + \dkf{2}(\tau,\tau)\,\k
   + \df(\k)\,\k
   \Big)
\\
&\quad
 - \Big(
   \fg\IP{\cz}{\tau}\nabla_s\k
 + \df(\tau)\IP{\cz}{\tau}\k
 + \fg\IP{\L\tau}{\tau}\k
 + \fg\IP{\cz}{\k}\k
   \Big)
\\
&\quad
   + \bigg[
      \Big(
        \L^T\big(\nabla_s\k - |\k|^2\tau - \L\tau - \df(\tau)\,\tau - \fg\k\big)
      - \IP{\L^T(\partial_s\k-\partial_s\c)}{\tau}\tau
      \Big)
\\
&\qquad
      - \Big(
          \IP{\L\tau}{\tau}(\df)^T
        + \df(\tau)(\df)^T
        + \IP{\c}{\k}(\df)^T
\\
&\qquad\qquad
        + \IP{\c}{\tau}\big(\dkf{2}(\tau)\big)^T
        - \IP{\c}{\tau}\IP{\big(\dkf{2}(\tau)\big)^T}{\tau}\tau
        \Big)
\\
&\qquad
      + \Big(
            \dkf{2}(\tau,\tau)\,\cz
          + \df(\k)\,\cz
          + \df(\tau)\L\tau
          - \df(\tau)\IP{\L\tau}{\tau}\tau
        \Big)
\\
&\qquad
      + \Big(
            \df(\tau)\L\tau
          + \fg\L\k
          - \fg\IP{\L\k}{\tau}\tau
        \Big)
\\
&\qquad
   - \IP{\L^T(\k-\c)
      - \IP{\c}{\tau}(\df)^T
      + \df(\tau)\,\cz
      + \fg\L\tau}{\tau}\k
     \bigg]
\\&=
      \IP{\c}{\tau} \nabla_s^2\k
    + \nabla^3_s\cz
    + \Big[
           2\IP{\L\tau}{\tau}
         + 2\df(\tau)
         + 2\IP{\cz}{\k}
         + \frac12|\cz\ts|^2
         + \frac12|\fg|^2
      \Big] \nabla_s\k
\\&
\quad
    + \Big[
           \IP{\cz}{\nabla_s\k}
         - |\k|^2\IP{\cz}{\tau}
         + \fg\IP{\cz}{\k}
         + 2\IP{\L\tau}{\k}
         + \IP{\L\k}{\tau}
	 - \df(\tau)\IP{\cz}{\tau}
\\
&\qquad
         + \df(\k)
         - \fg\IP{\L\tau}{\tau}
         - \fg\IP{\cz}{\k}
         + \IP{\c}{\L\tau}
         + \df(\tau)\IP{\c}{\tau}
         + \dkf{2}(\tau,\tau)
\\
&\qquad
   + \IP{ - \L^T\k + \L^T\c
      + \IP{\c}{\tau}(\df)^T
      - \df(\tau)\,\cz
      - \fg\L\tau}{\tau}
      \Big]\k
\\
&\quad
   + \bigg[
          \L^T\nabla_s\k
        - |\k|^2\L^T\tau
        - \fg\L^T\k
        - \IP{\cz}{\k}(\df)^T
        + \df(\k)\,\cz
        + \fg\L\k
        - \L^T\L\tau
\\
&\qquad
        - \df(\tau)\L^T\tau
        - \IP{\L\tau}{\tau}(\df)^T
        - \df(\tau)(\df)^T
        - \IP{\c}{\tau}\big(\dkf{2}(\tau)\big)^T
\\
&\qquad
        + \dkf{2}(\tau,\tau)\,\cz
        + 2\df(\tau)\L\tau
     \bigg]
 + Y\tau
\end{align*}
where 
\begin{align*}
Y &=
      - \IP{\L^T(\partial_s\k-\partial_s\c)}{\tau}
      + \IP{\c}{\tau}\IP{\big(\dkf{2}(\tau)\big)^T}{\tau}
      - \fg\IP{\L\k}{\tau}
      - \df(\tau)\IP{\L\tau}{\tau}
\,.
\end{align*}
In order to control this rather daunting expression let us introduce another kind of $P$-style notation.
We use $P(v_1;\cdots;v_m)$ to denote a polynomial in $v_i$, $\L v_i$, $\L^Tv_i$, for
$i=1,\ldots,m$, of arbitrarily high (but finite) order and with coefficients depending only on universal
constants.
More precisely,
\begin{align}
P(v_1;\cdots;v_m)
 = \sum_{i=1}^p c_{i}\,
                     &\Big(v_{1}^{\alpha_{i,1}} * (\L v_{1})^{\alpha_{i,2}}
                                                       * (\L^Tv_{1})^{\alpha_{i,3}}\Big)
\notag\\
                     &*\Big(v_{2}^{\alpha_{i,3}} * (\L v_{2})^{\alpha_{i,4}}
                                                       * (\L^Tv_{2})^{\alpha_{i,5}}\Big)
\notag\\
                     &*\cdots 
                     *\Big(v_{m}^{\alpha_{i,3m}} * (\L v_{m})^{\alpha_{i,3m+1}}
                                                        * (\L^Tv_{m})^{\alpha_{i,3m+2}}\Big)
\label{EQDFNpolyterm}
\end{align}
for some positive integer $p$, constants $c_{i}\in\R$, and non-negative integeral powers
$\alpha_{i,j}$.
Recall that the $*$ product allows arbitrary re-orderings of the arguments (see \eqref{EQstarnotation}).
In the above expression the $*$-notation has been extended to allow powers of elements, which are expanded
according to:
\begin{equation*}
v^0 = \text{id},\qquad
v^1 = v,\qquad
v^q = v*v^{q-1}\,,\quad q\ge1.
\end{equation*}
It is important to note that there are no derivatives of any $v_i$ in $P(v_1;\cdots;v_m)$.
We also introduce the $\bs$ product, which is an extension of the $*$ product blind to the presence of
premultiplication by $\L$ and $\L^T$; that is, for vectors $X,Y$ let us set
\begin{equation*}
X \bs Y =
     X * (\sigma_1 Y + \sigma_2 \L Y + \sigma_3 \L^TY)
 +  (\sigma_4 X + \sigma_5 \L X + \sigma_6 \L^TX) * Y\,,
\end{equation*}
where $\sigma_i \in \R$ are (possibly zero) constants.

We briefly compute
\begin{align*}
 \nabla_s^3\cz
 &= \nabla_s\Big((\L\k)^\bot - \IP{\tau}{\L\tau}\k\Big)
  = \nabla_s\Big(\L\k - \IP{\tau}{\L\k}\tau - \IP{\tau}{\L\tau}\k\Big)
\\
 &= \L\partial_s\k - \IP{\tau}{\L\tau}\nabla_s\k
  - \big(2\IP{\tau}{\L\k} + \IP{\k}{\L\tau}\big)\k
  - \IP{\tau}{\L\partial_s\k}\tau
\\
 &= \L\nabla_s\k - \IP{\tau}{\L\tau}\nabla_s\k
  - \big(2\IP{\tau}{\L\k} + \IP{\k}{\L\tau}\big)\k
  - \big(\IP{\tau}{\L\partial_s\k} + |\k|^2\big)\tau
\\
 &= P(\tau)\bs(\nabla_s\k + \k\bs\k)
  - \big(\IP{\tau}{\L\partial_s\k} + |\k|^2\big)\tau\,.
\end{align*}
Components of $\nabla_sV_c$ in purely tangential directions (those contained in $Y$ above) will be ignored, as
they vanish upon taking the inner product with $\nabla_s^3\k$.
We collect the remaining terms roughly according to their order (adding $-(\IP{\tau}{\L\partial_s\k} +
|\k|^2)$ to $Y$) by
\begin{align}
\nabla_sV_c
 &=
 \IP{\c}{\tau} \nabla^2_s\k
 + P(\tau;\cz;\fg;\df)\bs(\nabla_s\k + \k\bs\nabla_s\k)
\notag\\
&\quad
 + P\big(\tau;\cz;\fg;\df;\M;\dkf{2}(\tau)\big)\bs(\k + \k\bs\k + \k\bs\k\bs\k)
\notag\\
&\quad
 + P\Big(\tau;\L\tau;\cz;\fg;\df;\M;\dkf{2}(\tau)\Big)
 + Y\tau\,.
\label{EQnablasVcshort}
\end{align}
(Recall that the $*$ product acts on functions, vector fields, and 1-forms.)
Lemmas \ref{LMaprioriL2controlofK}, \ref{LMaprioricontrolofC0}, and \ref{LMaprioricontrolofC} allow us
pointwise control of $\tau$, $\c$, and $\cz$.
Furthermore, Assumption \ref{Af} gives uniform bounds on $\dkf{k}$ for all $k\ge0$.
We may thus estimate 
\[
\bigg|
 P\Big(\tau;\L\tau;\cz;\fg;\df;\M;\dkf{2}(\tau)\Big)
\bigg| \le c.
\]
In the above estimate (and for the remainder of the proof) $c$ depends additionally on $c_2$.
Inserting the expansions above and estimating, we find
\begin{align*}
 2\int_\gamma \IP{\nabla_s^3\k}{
   \nabla_sV_c }
  \,ds
&\le
   c\int_\gamma
        |\nabla_s^3\k|\,
        \Big(|\nabla^2_s\k| + |\nabla_s\k|
           + |\k|\,|\nabla_s\k| + 1 + |\k| + |\k|^2 + |\k|^3\Big)\,ds
\notag\\
&\le
    \frac14\int_\gamma |\nabla_s^3\k|^2ds
  + c\int_\gamma \Big( |\nabla_s^2\k|^2 + |\k|^2|\nabla_s\k|^2 + |\k|^6 \Big)\,ds
  + cL(\gamma)\,.
\end{align*}
Lemma \ref{LMaprioricontrolofL} provides a uniform estimate for the last term on the right, whereas for the
second term we use \eqref{EQDKSinterpfork} (or \eqref{EQstrongerDKSinterpfork}) with
$[\nu=2,\mu=4,k=3,\sigma=\frac43<2]$,
$[\nu=4,\mu=2,k=3,\sigma=1<2]$, and
$[\nu=6,\mu=0,k=3,\sigma=\frac{2}{3}<2]$,
and Lemma \ref{LMaprioriL2controlofK} to obtain
\begin{align*}
   c\int_\gamma \Big( &|\nabla_s^2\k|^2 + |\k|^2|\nabla_s\k|^2 + |\k|^6 \Big)\,ds
 = c\int_\gamma \Big( \P_2^{4,2} + \P_4^{2,2} + \P_6^{0,2} \Big)(\k)\,ds
\\
 &\le \frac14\int_\gamma |\nabla_s^3\k|^2ds + c\,.
\end{align*}
Inserting the above pair of estimates into \eqref{EQevogradsinL2aftertwiceabsorbing} and absorbing yields
\begin{equation}
\label{EQevogradsinL2afterallabsorbing}
\rd{}{t}\int_\gamma |\nabla_s\k|^2ds
+ \int_\gamma |\nabla_s^{3}\k|^2ds
+ \lambda\int_\gamma |\nabla_s^2\k|^2ds
 \le c\,.
\end{equation}
The elementary interpolation inequality ($m\ge1$, $p\ge0$, $m$,$p$ integers)
\begin{equation}
\int_\gamma |\nabla_s^{m}\k|^2ds
\le
\varepsilon\int_\gamma |\nabla_s^{m+p}\k|^2d
+ c_{\varepsilon}\int_\gamma |\k|^2ds
\label{EQsimpleinterp}
\end{equation}
with $m=1$, $p=2$, $\varepsilon=1$, combined with \eqref{EQevogradsinL2afterallabsorbing}, implies
\begin{equation}
\rd{}{t}\int_\gamma |\nabla_s\k|^2ds
 \le c - \int_\gamma |\nabla_s\k|^2ds\,.
\label{EQfinalevofornablak}
\end{equation}
We are now in a position to conclude \eqref{EQLMaprioriL2controlofNablaKCNCLSN} via a simple proof by
contradiction.
Indeed, assuming a bound of the form \eqref{EQLMaprioriL2controlofNablaKCNCLSN} did not hold, for any
$C<\infty$ there would exist a $\tilde{t}$ depending only on $C$ such that
\begin{equation}
\int_\gamma |\nabla_s\k|^2ds
 > C\text{, for all }t\in[\tilde{t},T)\,.
\label{EQcontradassumption}
\end{equation}
This is in particular true for $C = c$ where $c$ the constant in \eqref{EQfinalevofornablak}.
Since $\vn{\nabla_s\k}_{L^2}^2\in C^1([0,T))$, the estimate \eqref{EQfinalevofornablak} applies, which for
$t\in[\tilde{t},T)$ implies
\begin{equation*}
\rd{}{t}\int_\gamma |\nabla_s\k|^2ds
 \le c - \int_\gamma |\nabla_s\k|^2ds
 < 0,
\end{equation*}
in contradiction with \eqref{EQcontradassumption}.
This argument establishes the bound \eqref{EQLMaprioriL2controlofNablaKCNCLSN} with $k_1 = c -
\vn{\nabla_s\k}_{L^2}^2\big|_{t=0}$. 
\end{proof}

We now bound all higher order derivatives of the curvature.

\begin{lem}
\label{LMhigherorderbounds}
Let $\gamma:\S^1\times[0,T)\rightarrow\R^n$ be a generalised Helfrich flow.
Suppose $\ca$ and $f$ fulfil Assumptions \ref{Aca} and \ref{Af} respectively and $\lambda>0$.
Let $m$ be a positive integer.
Then there exist constants $k_m$ such that
\begin{equation}
\label{EQhigherorderboundsCNCLSN}
\int_\gamma |\nabla_s^m\k|^2ds \le \int_\gamma |\nabla_s^m\k|^2ds\bigg|_{t=0} + k_m.
\end{equation}
The constants $k_m$ depend only on $\SH(\gamma_0)$, $\lambda$, $n$, $|\L|$, and $c_i$ for $i = 0,\ldots,m+1$.
\end{lem}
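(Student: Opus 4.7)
The plan is to argue by induction on $m$, with the base case $m=1$ supplied by Lemma \ref{LMaprioriL2controlofNablaK}. Assume \eqref{EQhigherorderboundsCNCLSN} is known for all indices less than $m$. Using \eqref{EQfvintevo} together with the general evolution equation \eqref{EQgeneralevonablask}, I compute
\begin{align*}
\rd{}{t}\int_\gamma |\nabla_s^m\k|^2ds
&+ 2\int_\gamma |\nabla_s^{m+2}\k|^2ds + 2\lambda\int_\gamma |\nabla_s^{m+1}\k|^2ds
\\
&= 2\int_\gamma \IP{\nabla_s^m\k}{(P_3^{m+2}+\lambda P_3^m+P_5^m)(\k) + \nabla_s^{m+2}V_c + P_{2,1}^m(\k;V_c)}\,ds
\\
&\quad + \int_\gamma |\nabla_s^m\k|^2 P_2^0(\k)\,ds\,.
\end{align*}
The leading dissipation term $-2\int |\nabla_s^{m+2}\k|^2\,ds$ is the quantity I will absorb into.

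The first group of terms on the right is purely intrinsic and was handled in \cite{DKS02}: moving one derivative to the factor of $\nabla_s^m\k$ via integration by parts yields $P_4^{2m+2,m+1}(\k) + \lambda P_4^{2m,m+1}(\k) + P_6^{2m,m+1}(\k)$, to which \eqref{EQDKSinterpfork} (with $k=m+2$) applies and can be absorbed into $\varepsilon\int|\nabla_s^{m+2}\k|^2\,ds$ plus a power of $\int|\k|^2\,ds$ controlled by Lemma \ref{LMaprioriL2controlofK}.

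The heart of the proof is estimating the $V_c$-terms. The strategy is the one inaugurated in Lemma \ref{LMaprioriL2controlofNablaK}: expand $\nabla_s^j V_c$, for $j\le m+2$, into a sum of two kinds of terms, namely (i) a polynomial expression in normal derivatives of $\k$ of order at most $m+1$, with coefficients built from $\tau$, $\cz$, $\fg$, $\df$, $\L\tau$, $\L^T\tau$, $\dkf{k}$, and (ii) a purely tangential remainder $Y\tau$, which drops out upon testing against the normal field $\nabla_s^m\k$. The pointwise bounds on $\c$ and $\cz$ from Lemmas \ref{LMaprioricontrolofC0} and \ref{LMaprioricontrolofC}, together with Assumption \ref{Af} applied to $\dkf{k}$ for $k\le m+2$, make every such coefficient uniformly bounded by a constant depending only on the quantities listed in the claim. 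The crucial observation that $[\nabla_s\L X-\L\nabla_sX]^\perp$ is a zero-order expression in $\k$ and $X$ ensures that the normal commutator corrections contribute only $P$-style terms of the expected order, so no $\nabla_s^{m+2}\k$ appears outside the dissipation term. After performing one integration by parts to move a derivative off the top-order $\nabla_s^{m+2}V_c$ factor onto $\nabla_s^m\k$, the remaining integrands fit the hypothesis of \eqref{EQDKSinterpfork} (or its sharper version \eqref{EQstrongerDKSinterpfork}) with appropriate $\nu$, $\mu$, $k=m+2$, and all exponents $\sigma$ remain strictly less than $2$ since the flow is fourth-order; this is where the induction hypothesis on lower-order $\vn{\nabla_s^j\k}_{L^2}$ is not actually needed pointwise, but yields bounds on the constant through $\vn{\k}_{L^2}$.

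Collecting these estimates, choosing $\varepsilon$ small, and absorbing the dissipation yields
\[
\rd{}{t}\int_\gamma |\nabla_s^m\k|^2ds + \int_\gamma |\nabla_s^{m+2}\k|^2ds \le c\,,
\]
with $c$ depending on $\SH(\gamma_0)$, $\lambda$, $n$, $|\L|$, $c_0,\ldots,c_{m+1}$. A final application of the elementary interpolation \eqref{EQsimpleinterp} with $p=2$ gives
\[
\rd{}{t}\int_\gamma |\nabla_s^m\k|^2ds \le c - \int_\gamma |\nabla_s^m\k|^2ds\,,
\]
and the contradiction argument at the end of Lemma \ref{LMaprioriL2controlofNablaK} delivers \eqref{EQhigherorderboundsCNCLSN} with $k_m = c - \vn{\nabla_s^m\k}_{L^2}^2\big|_{t=0}$. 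The main obstacle is the bookkeeping in step two, namely writing $\nabla_s^{m+2}V_c$ in a form in which every non-tangential summand is manifestly of $P_\nu^{\mu,m+1}(\k)$-type with uniformly bounded coefficient; once this is done the remaining analysis follows the pattern already established.
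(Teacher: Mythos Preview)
Your overall architecture---compute the evolution of $\int|\nabla_s^m\k|^2\,ds$ from \eqref{EQgeneralevonablask}, absorb the intrinsic $P_3^{m+2}$, $P_5^m$ terms exactly as in \cite{DKS02}, estimate the $V_c$-contributions, interpolate via \eqref{EQsimpleinterp}, and close with the contradiction argument---matches the paper. The gap is in how you propose to handle the $V_c$-terms, and specifically in what you claim about the induction hypothesis.

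You write that the induction hypothesis ``is not actually needed pointwise, but yields bounds on the constant through $\vn{\k}_{L^2}$.'' This is where your sketch diverges from what the paper actually does, and it hides the main difficulty. In the paper the induction hypothesis is used precisely to obtain \emph{pointwise} ($L^\infty$) control of $\partial_s^j\gamma$ for all $j\le m$ (via \cite[Lemma 2.7]{DKS02}), and hence of every polynomial $\PX$ built from $\cz$, $\fg,\partial_s\fg,\ldots,\partial_s^{m+1}\fg$, $\partial_s\gamma,\ldots,\partial_s^m\gamma$, and their images under $\L$. This is what makes the estimate of $\vn{\nabla_s^m V_c}_{L^2}^2$ tractable: once all lower-order curvature derivatives are uniformly bounded, only the top two orders survive and fall within the range of \eqref{EQDKSinterpfork}. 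Your description of the coefficients as ``built from $\tau,\cz,\fg,\df,\L\tau,\L^T\tau,\dkf{k}$'' is not accurate for large $m$: by the chain rule (see \eqref{EQchainrule} in the paper), $\partial_s^q\fg$ is a polynomial in $\partial_s\gamma,\ldots,\partial_s^q\gamma$, so after $m$ normal derivatives the ``coefficients'' themselves carry many intermediate-order curvature factors. If you attempt to push all of these into a single $P_\nu^{\mu,m+1}(\k)$ and interpolate, you must verify $\sigma<2$ for every resulting $(\nu,\mu)$, which you have not done and which is genuinely delicate.

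Two further points of difference. First, the paper treats $m=2$ separately, because at that stage only $\vn{\k}_{L^\infty}$ is available (from the $m=1$ case) and not yet $\vn{\nabla_s\k}_{L^\infty}$; only after $m=2$ is established does one have enough $L^\infty$ control to launch the general induction for $m\ge3$. Your sketch jumps directly to general $m$. Second, rather than integrating by parts once on $\int\IP{\nabla_s^m\k}{\nabla_s^{m+2}V_c}\,ds$, the paper integrates by parts twice to reach $\int\IP{\nabla_s^{m+2}\k}{\nabla_s^{m}V_c}\,ds$ and then estimates $\vn{\nabla_s^m V_c}_{L^2}^2$ term by term using the $L^\infty$ bounds; this is where the bulk of the work (estimates \eqref{EQestfornablaMVc1}--\eqref{EQestfornablaMVc7}) lies.
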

\begin{proof}
As in the previous lemma, we shall employ the notation $P(v_1;\cdots;v_m)$ to denote a polynomial in
$v_i$, $\L v_i$, $\L^Tv_i$.
This notation is particularly helpful in light of Lemmas
\ref{LMaprioriL2controlofK}--\ref{LMaprioriL2controlofNablaK}.
Recalling \cite[Lemma 2.7]{DKS02}, we note the bound $\vn{\k}_{L^\infty} \le c$ and so
\begin{equation*}
|P(\cz;\fg;\tau;\k)| =
|P(\cz;\fg;\partial_s\gamma;\partial_s^2\gamma)| \le c\,.
\end{equation*}
Using \eqref{EQgeneralevonablask} one may compute
\begin{align}
\rd{}{t}&\int_\gamma |\nabla_s^m\k|^2ds
+ 2\int_\gamma |\nabla_s^{m+2}\k|^2ds
+ 2\lambda\int_\gamma |\nabla_s^{m+1}\k|^2ds
+ 2\lambda\int_\gamma |\nabla_s^{m}\k|^2|\k|^2ds
\notag\\
 &=
   2\int_\gamma \IP{\nabla_s^m\k}{
    (P_3^{m+2}+\lambda P_3^m+P_5^m)(\k)}\,ds
 + 2\int_\gamma \nabla_s^m\k * P_{2,1}^m(\k;V_c)\,ds
\notag\\&\quad
\label{EQhigherorderL2nablamskevo}
 + 2\int_\gamma \IP{\nabla_s^m\k}{\nabla_s^{m+2}V_c}\,ds
 -  \int_\gamma |\nabla_s^m\k|^2\IP{\k}{V_c}\,ds\,.
\end{align}
The first term on the right may be interpolated exactly as in \cite[Theorem 3.2]{DKS02}:
\begin{equation}
2\int_\gamma \IP{\nabla_s^m\k}{
    (P_3^{m+2}+\lambda P_3^m+P_5^m)(\k)}\,ds
\le
    \frac14\int_\gamma |\nabla_s^{m+2}\k|^2ds
    + c\,.
\label{EQhigherorderest1}
\end{equation}
For the fourth term we use Lemma \ref{LMaprioriL2controlofNablaK} in combination with the estimate
\eqref{EQestzeroorderforVc} to obtain $|V_c| \le c(1+|\nabla_s\k|)$, which implies
\begin{equation*}
 -  \int_\gamma |\nabla_s^m\k|^2\IP{\k}{V_c}\,ds
\le
    c\int_\gamma |\nabla_s^m\k|^2\,(1+|\nabla_s\k|)\,ds
\le
    c\int_\gamma \big|(P_3^{2m+1;m}+P_2^{2m;m})(\k)\big|\,ds\,.
\end{equation*}
Employing now the interpolation inequality \eqref{EQstrongerDKSinterpfork} with
[$\nu=3$, $\mu=2m+1$, $k=m+2$, $\sigma=\frac{4m+3}{2m+4}<2$] and
[$\nu=2$, $\mu=2m$, $k=m+2$, $\sigma=\frac{2m}{m+2}<2$],
we find
\begin{equation}
 -  \int_\gamma |\nabla_s^m\k|^2\IP{\k}{V_c}\,ds
\le
    \frac14\int_\gamma |\nabla_s^{m+2}\k|^2ds
    + c\,.
\label{EQhigherorderest2}
\end{equation}
Combining \eqref{EQhigherorderest1} and \eqref{EQhigherorderest2} with \eqref{EQhigherorderL2nablamskevo} we
find
\begin{align}
\rd{}{t}&\int_\gamma |\nabla_s^m\k|^2ds
+ \frac32\int_\gamma |\nabla_s^{m+2}\k|^2ds
\notag\\
 &\le
   2\int_\gamma \nabla_s^m\k * P_{2,1}^m(\k;V_c)\,ds
 + 2\int_\gamma \IP{\nabla_s^m\k}{\nabla_s^{m+2}V_c}\,ds\,.
\label{EQhigherorderevo1}
\end{align}
Let us first consider the case $m=2$.
This will serve to demonstrate aspects of the more general method, which we will be able to apply once we can
safely assume $m\ge3$.
Note that \eqref{EQhigherorderboundsCNCLSN} for $m=2$ will imply $L^\infty$ bounds for $\nabla_s\k$ and
thus allow us to control $V_c$ in $L^\infty$.

We need to estimate $\int_\gamma \IP{\nabla_s^4\k}{\nabla_s^2V_c}\,ds$.
Let us first note the formulae ($\phi$, $\eta$ normal)
\begin{align}
\nabla_s\big(P(X;\tau) \bs \phi\big)
 &= P(\partial_sX;X;\tau;\k) \bs (\nabla_s\phi + \phi)\,,
\label{EQdiffrulebs}
\\
\nabla_s\big(\eta \bs \phi\big)
 &=
     \nabla_s\eta \bs \phi + \nabla_s\phi \bs \eta
   - \IP{\phi}{\k}\tau \bs \eta - \IP{\eta}{\k}\tau \bs \phi 
   + Y\tau\,,
\label{EQdiffrunebsprod}
\end{align}
where $Y = 
   - \IP{\nabla_s\eta \bs \phi}{\tau}
   + \IP{\eta}{\k}\IP{\tau \bs \phi}{\tau}
   - \IP{\nabla_s\phi \bs \eta}{\tau}
   + \IP{\phi}{\k}\IP{\tau \bs \eta}{\tau}$.
The second formula \eqref{EQdiffrunebsprod} follows from the computation
\begin{align*}
\nabla_s\big(\eta \bs \phi\big)
 &= \partial_s\eta \bs \phi + \partial_s\phi \bs \eta
   - \IP{\partial_s\eta \bs \phi + \partial_s\phi \bs \eta}{\tau}\tau
\\
 &=  \nabla_s\eta \bs \phi - \IP{\eta}{\k}\tau \bs \phi 
   + \nabla_s\phi \bs \eta - \IP{\phi}{\k}\tau \bs \eta 
   - \IP{\partial_s\eta \bs \phi + \partial_s\phi \bs \eta}{\tau}\tau
\\
 &=  \nabla_s\eta \bs \phi + \nabla_s\phi \bs \eta
   - \IP{\phi}{\k}\tau \bs \eta - \IP{\eta}{\k}\tau \bs \phi 
\\
&\quad
   - \IP{\nabla_s\eta \bs \phi}{\tau}\tau
   + \IP{\eta}{\k}\IP{\tau \bs \phi}{\tau}\tau
   - \IP{\nabla_s\phi \bs \eta}{\tau}\tau
   + \IP{\phi}{\k}\IP{\tau \bs \eta}{\tau}\tau
\end{align*}
(Keep in mind that the $*$ product (and so also the $\bs$ product) is blind to reordering of its arguments.)
In our current situation with $\k\in L^\infty$, this can be usefully cast as
\begin{equation*}
\nabla_s\big(\eta \bs \phi\big)
 = P(\tau;\k)\bs(\phi\bs\eta + \nabla_s\eta \bs \phi + \nabla_s\phi \bs \eta)
\,.
\end{equation*}
This is the form in which we shall apply \eqref{EQdiffrunebsprod} below.
For the proof of \eqref{EQdiffrulebs}, note that $\partial_s P(X) = P(\partial_sX;X)$, $\nabla_s P(X) =
P(\partial_sX;X;\tau)$, $\partial_s\phi = \nabla_s\phi - \phi*\k*\tau$, and compute
\begin{align*}
\nabla_s\big(P(X;\tau) \bs \phi\big)
 &=  P(\partial_sX;X;\tau;\k) \bs \phi
   + P(X;\tau;\k) \bs (\partial_s\phi)
\\
&\quad
   - \Big(P(\partial_sX;X;\tau;\k) \bs \phi
   + P(X;\tau;\k) \bs (\partial_s\phi)\Big)*\tau*\tau
\\
 &= P(\partial_sX;X;\tau;\k) \bs (\nabla_s\phi + \phi)\,.
\end{align*}
Differentiating \eqref{EQnablasVcshort} with the help of \eqref{EQdiffrulebs} and \eqref{EQdiffrunebsprod} we
obtain
\begin{align}
\nabla_s^2V_c
 &=
   \IP{\c}{\tau} \nabla^3_s\k
 + P\Big(\tau;\L\tau;\cz;\fg;\df;\dkf{2}(\tau);\k\Big)
  \bs(\nabla_s^2\k + \nabla_s\k + \nabla_s\k\bs\nabla_s\k)
\notag\\
&\quad
 + P\Big(\tau;\L\tau;\M;\cz;\fg;\df;\dkf{2}(\tau);\dkf{3}(\tau,\tau);\dkf{2}(\k);\k\Big)
  \bs\nabla_s\k
\notag\\
&\quad
 + P\Big(\tau;\L\tau;\M;\cz;\fg;\df;\dkf{2}(\tau);\dkf{3}(\tau,\tau);\dkf{2}(\k);\k;\L\k\Big)
 + (\partial_sY)\tau\,,
\label{EQnablas2Vcstarproduct}
\end{align}
where $Y$ is as in \eqref{EQnablasVcshort}.
Using \eqref{EQnablas2Vcstarproduct}, Assumption \ref{Af}, and Lemmas
\ref{LMaprioriL2controlofK}--\ref{LMaprioriL2controlofNablaK}, we estimate
\begin{align}
 2\int_\gamma &\IP{\nabla_s^2\k}{\nabla_s^{4}V_c}\,ds
=
 2\int_\gamma \IP{\nabla_s^4\k}{\nabla_s^2V_c}\,ds
\notag\\
&\le
     c\int_\gamma |\nabla_s^4\k|\Big(|\nabla_s^3\k| + |\nabla_s^2\k| + |\nabla_s\k| + |\nabla_s\k|^2\Big)\,ds
   + cL(\gamma)
\notag\\
&\le
  \frac18\int_\gamma |\nabla_s^4\k|^2 ds
   + c\int_\gamma \Big(|\nabla_s^3\k|^2 + |\nabla_s^2\k|^2 + |\nabla_s\k|^4\Big)\,ds
   + c
\notag\\
&\le
  \frac14\int_\gamma |\nabla_s^4\k|^2 ds
   + c\,.
\label{EQprelimtermfornabla2k}
\end{align}
For the last inequality we used \eqref{EQstrongerDKSinterpfork} with
[$\nu=2$, $\mu=6$, $k=4$, $\sigma=\frac32<2$], 
[$\nu=2$, $\mu=4$, $k=4$, $\sigma=1<2$], and
[$\nu=4$, $\mu=4$, $k=4$, $\sigma=\frac54<2$].
It remains to estimate the term
\begin{align*}
2&\int_\gamma \nabla_s^2\k * P_{2,1}^2(\k;V_c)\,ds
  = 2\int_\gamma \nabla_s^2\k * \nabla_s^2\big(\k*\k*V_c\big)\,ds
\\
 &= 
    2\int_\gamma \nabla_s^2\k * \k*\k*\nabla_s^2V_c\,ds
  + 4\int_\gamma \nabla_s^2\k * \nabla_s\k*\k*\nabla_sV_c\,ds
  + \int_\gamma P_3^{4,2}(\k)*V_c\,ds.
\end{align*}
Integration by parts gives
\begin{align*}
    c\int_\gamma &\nabla_s^2\k * \k*\k*\nabla_s^2V_c\,ds
  + c\int_\gamma \nabla_s^2\k * \nabla_s\k*\k*\nabla_sV_c\,ds
\\
&= c\int_\gamma \big(
        \nabla_s^4\k * \k * \k
      + \nabla_s^3\k * \nabla_s\k * \k
      + \nabla_s^2\k * \nabla_s^2\k * \k
      + \nabla_s^2\k * \nabla_s\k * \nabla_s\k
               \big) * V_c\,ds
\\
&=
   c\int_\gamma \big( P_3^{4,3}(\k)
               \big) * V_c\,ds
 + c\int_\gamma 
        \nabla_s^4\k * \k * \k * V_c\, ds.
\end{align*}
Combining the last two equalities and estimating we have
\begin{align}
2&\int_\gamma \nabla_s^2\k * P_{2,1}^2(\k;V_c)\,ds
\notag\\
&\le
   c\int_\gamma (\P_3^{4,3} + \P_4^{5,3} + \P_6^{2,3}
                )(\k)\,ds
 + \frac1{8}\int_\gamma 
        |\nabla_s^4\k|^2\,ds
 + c\,.
\label{EQprelimtermfornabla2k2}
\end{align}
Interpolating using \eqref{EQstrongerDKSinterpfork} with
[$\nu=3$, $\mu=4$, $k=4$, $\sigma=\frac{9}{8}<2$], 
[$\nu=4$, $\mu=5$, $k=4$, $\sigma=\frac{3}{2}<2$], and
[$\nu=6$, $\mu=2$, $k=4$, $\sigma=\frac34<2$],
 we further estimate the first term by
\begin{align*}
   c\int_\gamma (\P_3^{4,3} + \P_4^{5,3} + \P_6^{2,3}
                )(\k)\,ds
\le
  \frac1{8}\int_\gamma |\nabla_s^{4}\k|^2ds
 + c,
\end{align*}
which, upon combination with \eqref{EQprelimtermfornabla2k}, \eqref{EQprelimtermfornabla2k2} and reinsertion
into \eqref{EQhigherorderevo1} (with $m=2$) yields
\begin{equation*}
\rd{}{t}\int_\gamma |\nabla_s^2\k|^2ds
+ \int_\gamma |\nabla_s^{4}\k|^2ds
 \le
  c\,.
\end{equation*}
Combining this estimate with \eqref{EQsimpleinterp} for $m=2$, $p=2$, $\varepsilon=1$, we obtain
\begin{equation}
\rd{}{t}\int_\gamma |\nabla_s^2\k|^2ds
 \le
 -\int_\gamma |\nabla_s^2\k|^2ds
 +  c\,.
\label{EQfinalestimateL2nablas2K}
\end{equation}
A contradiction argument completely analogous to that which gave \eqref{EQLMaprioriL2controlofNablaKCNCLSN}
now implies the bound \eqref{EQhigherorderboundsCNCLSN} for $m=2$, with $k_2 = c -
\vn{\nabla_s^2\k}_{L^2}^2\big|_{t=0}$, where $c$ is the constant from \eqref{EQfinalestimateL2nablas2K}
which depends only on $\SH(\gamma_0)$, $\lambda$, $n$, $|\L|$, $c_0$, $c_1$, $c_2$, and $c_3$.

From now on we shall assume $m\ge3$, and prove \eqref{EQhigherorderboundsCNCLSN} by induction.
The inductive hypothesis is that \eqref{EQhigherorderboundsCNCLSN} holds for all $1 \le p \le m-1$, and implies
\begin{equation}
P\big(\cz;\fg;\partial_s\gamma;\partial_s^2\gamma;\cdots;\partial_s^m\gamma;\L\partial_s\gamma;\cdots;\L\partial_s^m\gamma\big)
\le c,\text{ and } \int_\gamma |\partial_s^{m+1}\gamma|^2ds \le c\,.
\label{EQinduchyp}
\end{equation}
In the above and from now on (unless otherwise explicitly stated) $c$ shall denote a constant depending 
only on $\SH(\gamma_0)$, $\lambda$, $n$, $|\L|$, and $c_i$ for $i = 0,\ldots,m+1$.
We shall also employ the abbreviation
\begin{equation*}
\PX
 = P\Big(\cz;
         \fg;\partial_s\fg;\cdots;\partial_s^{m+1}\fg;
         \partial_s\gamma;\partial_s^2\gamma;\cdots;\partial_s^m\gamma;
         \L\partial_s\gamma;\cdots;\L\partial_s^m\gamma
\Big)\,.
\end{equation*}
The second inequality in \eqref{EQinduchyp} follows from the argument of Lemma 2.7 in \cite{DKS02} (see in
particular equation (2.20)), except here it is important for us to work in $L^2$ instead of $L^1$.
Clearly we also have $\vn{\nabla_s^{m-2}V_c}_{L^2}^2, \vn{\partial_s^{m-2}V_c}_{L^2}^2 \le c$ and
$\vn{\partial_s^pV_c}_{L^\infty} \le c$ for $1\le p \le m-3$.
For the remainder of the proof we shall apply these estimates and those given by Lemmas
\ref{LMaprioriL2controlofK}--\ref{LMaprioriL2controlofNablaK} typically without further comment.

The inductive hypothesis and Assumption \ref{Af} imply $|\PX|\le c$.
All components of $\PX$ are controlled by \eqref{EQinduchyp} apart from the many derivatives of $\fg$, which
we shall briefly discuss now.
Let us set some additional notation.
A partition of $J_k := \{1,\ldots,k\}$ is a family of pairwise disjoint non-empty subsets of $J_k$ whose union
is $J_k$.  The set of all functions from a partition $P$ of $J_k$ into $J_n$ is denoted by $P_n$, and the set
of all partitions of $J_k$ is denoted by $\mathbb{P}_k$.
A special case of \cite[Lemma 3]{M09} gives the following formula for the $q$-th derivative of $\fg =
(f\circ\gamma)$:
\begin{equation}
\partial_s^q(\fg)
 = \sum_{P\in\mathbb{P}_q} \sum_{\beta\in P_n} \bigg\{
     \Big( \prod_{B\in P} \pd{}{\gamma_{\lambda(B)}} \Big) \fg \bigg\}
     \bigg\{ \prod_{B\in P} \Big[ \Big( \prod_{b\in B} \pd{}{s} \Big) \gamma_{\lambda(B)} \Big] \bigg\}\,.
\label{EQchainrule}
\end{equation}
Taking absolute values and estimating, formula \eqref{EQchainrule} implies (cf. the proof of Corollary 12 in
\cite{M09}) that
\begin{equation*}
|\partial_s^q(\fg)| \le (1+q)^{n+q+1} A (1+B)^q\,,
\end{equation*}
where
\begin{equation*}
A = \max_{1\le|p|\le q} \Big| \frac{\partial^{|p|}\fg}{\partial \gamma^p} \Big|,
\quad\text{and}\quad
B = \max_{1\le i\le n}\max_{1\le|\beta|\le q} \Big| \frac{\partial^{|\beta|}\gamma}{\partial s} \Big|\,.
\end{equation*}
Clearly we have $A \le c$ for a constant $c$ depending only on $c_1,\ldots,c_q$ and $B \le
P(\partial_s\gamma;\cdots;\partial_s^q\gamma)$.
We conclude the estimate
\begin{equation}
|\partial_s^r(\fg)| \le c(1+q)^{n+r+1} 
|P(\partial_s\gamma;\cdots;\partial_s^r\gamma)|
 \le c\,.
\label{EQpartialsmfgest}
\end{equation}
Derivatives of $\fg$ up to and including the order $m+1$ are thus controlled in $L^\infty$ by combining
\eqref{EQpartialsmfgest} with \eqref{EQinduchyp} above.
This is enough to conclude $|\PX| \le c$.

Returning to the evolution equation \eqref{EQhigherorderevo1}, we employ integration by parts and the
induction hypothesis to estimate
\begin{align*}
\rd{}{t}&\int_\gamma |\nabla_s^m\k|^2ds
+ \frac32\int_\gamma |\nabla_s^{m+2}\k|^2ds
\\
 &\le
   2\int_\gamma \nabla_s^m\k * P_{2,1}^m(\k;V_c)\,ds
 + 2\int_\gamma \IP{\nabla_s^{m+2}\k}{\nabla_s^{m}V_c}\,ds
\\
 &\le
  \frac18\int_\gamma |\nabla_s^{m+2}\k|^2ds
 + c\int_\gamma \nabla_s^{m+2}\k * P_{2,1}^{m-2}(\k;V_c)\,ds
 + c\int_\gamma |\nabla_s^{m}V_c|^2ds
\\
 &\le
  \frac14\int_\gamma |\nabla_s^{m+2}\k|^2ds
 + c\int_\gamma |P_{2,1}^{m-2}(\k;V_c)|^2ds
 + c\int_\gamma |\nabla_s^{m}V_c|^2ds
\\
 &\le
  \frac14\int_\gamma |\nabla_s^{m+2}\k|^2ds
 + c\int_\gamma |\k|^4\,|\nabla_s^{m-2}V_c|^2\,ds
 + c\int_\gamma |P(\gamma;\cdots;\partial_s^m\gamma)|^2ds
\\
&\quad
 + c\int_\gamma |\nabla_s^{m}V_c|^2ds
\\
 &\le
  \frac14\int_\gamma |\nabla_s^{m+2}\k|^2ds
 + c\int_\gamma |\nabla_s^{m}V_c|^2ds + c\,.
\end{align*}
Therefore
\begin{align}
\rd{}{t}&\int_\gamma |\nabla_s^m\k|^2ds
+ \frac54\int_\gamma |\nabla_s^{m+2}\k|^2ds
 \le
  c\int_\gamma |\nabla_s^{m}V_c|^2ds + c\,,
\label{EQestfornablaMmaineq}
\end{align}
and it remains to estimate the term $\vn{\nabla_s^{m}V_c}_{L^2}^2$.
Clearly
\begin{align}
\int_\gamma |\nabla_s^{m}V_c|^2ds
&\le 
   c\int_\gamma |\nabla_s^{m+2}\cz\ts|^2ds
 + c\int_\gamma \Big|\nabla_s^{m}\big[(\L^T(\k-\c))^\bot\big]\Big|^2ds
\notag\\
&\quad
 + c\int_\gamma \Big|\nabla_s^{m}\big[|\c|^2\k\big]\Big|^2ds
 + c\int_\gamma \Big|\nabla_s^{m}\big[\IP{\c}{\tau}\nabla_s\k\big]\Big|^2ds
\notag\\
&\quad
 + c\int_\gamma \Big|\nabla_s^{m}\big[\IP{\cz}{\k}\k\big]\Big|^2ds
 + c\int_\gamma \Big|\nabla_s^{m}\big[\IP{\L\tau}{\tau}\k\big]\Big|^2ds
\notag\\
&\quad
 + c\int_\gamma \bigg|\nabla_s^{m}\Big[\df(\tau)\,\k\Big]\bigg|^2ds
 + c\int_\gamma \Big|\nabla_s^{m}\big[\fg\IP{\cz}{\tau}\k\big]\Big|^2ds
\notag\\
&\quad
 + c\int_\gamma \bigg|\nabla_s^{m}\Big[\df(\tau)\cz\ts\Big]^\bot\bigg|^2ds
 + c\int_\gamma \Big|\nabla_s^{m}\big[\fg\L\tau\big]^\bot\Big|^2ds
\notag\\
&\quad
 + c\int_\gamma \Big|\nabla_s^{m}\big[\IP{\c}{\tau}(\df)^T\big]^\bot\Big|^2ds\,.
\label{EQestfornablaMVc1}
\end{align}
We shall deal with each term of \eqref{EQestfornablaMVc1} in turn.
The general idea to keep in mind is that only the terms with the highest number of derivatives of $\k$ need to
be interpolated explicitly, using the induction hypothesis and our earlier estimates to deal with any other
auxilliary contributions.
Indeed, the highest order contribution in \eqref{EQestfornablaMVc1} above is in the fourth term, where we have
$\vn{\IP{\c}{\tau}\nabla_s^{m+1}\k}_{L^2}^2$.
If this were one order higher, then our interpolation method (using \eqref{EQDKSinterpfork} or
\eqref{EQstrongerDKSinterpfork}) would fail.
As it stands however, we are able to interpolate this term without difficulty (see \eqref{EQestfornablaMVc6}
below).
Apart from terms with a large number of derivatives of curvature, one must also be wary of terms with a high
degree of curvature.
The term with the highest degree of curvature above is the fifth, where among other lower-order contributions
we must deal with $\int_\gamma \cz*\cz*P_4^{2m,m}(\k)\,ds$.
This is far from critical for the interpolation inequality however, which could handle terms with $2m$
derivatives distributed among eight copies of $\k$, that is, terms of the form
$\int_\gamma P_8^{2m,m+1}(\k)\,ds$.

Let us begin with the first term.
A straightforward computation yields
\begin{align*}
\nabla_s^{m+2}\cz
 &=
   \nabla_s^{m-2}\big[ (\L\nabla_s^2\k)^\bot - \IP{\tau}{\L\tau}\nabla_s^2\k \big]
 - 3\IP{\nabla_s^{m-1}\k}{\k}(\L\tau)^\bot
 - 3\IP{\tau}{\L\partial_s^{m-2}\nabla_s\k}\k
\\
&\quad
 - \IP{\partial_s^{m-2}\nabla_s\k}{\L\tau}\k
 + P(\cz;\fg;\partial_s\gamma;\partial_s^2\gamma)*\nabla_s^{m-1}\k
 + \PX
\,,
\end{align*}
which one squares and integrates to find
\begin{align*}
c\int_\gamma |\nabla_s^{m+2}\cz\ts|^2ds
 &\le
   \int_\gamma \Big|\big(
                 \nabla_s^{m-2}\big[ (\L\nabla_s^2\k)^\bot - \IP{\tau}{\L\tau}\nabla_s^2\k \big]
                    \big)^\bot\Big|^2ds
\\
&\quad
 + c\int_\gamma |P(\cz;\fg;\partial_s\gamma;\partial_s^2\gamma)|^2|\partial_s^{m+1}\gamma|^2ds
 + c\int_\gamma |\PX|^2ds
\\
 &\le
   \int_\gamma \Big|\big(
                 \nabla_s^{m-2}\big[ (\L\nabla_s^2\k)^\bot - \IP{\tau}{\L\tau}\nabla_s^2\k \big]
                    \big)^\bot\Big|^2ds
 + c\,.
\end{align*}
We expand the first term on the right with
\begin{align}
\nabla_s^{m-2}&\big[ (\L\nabla_s^2\k)^\bot - \IP{\tau}{\L\tau}\nabla_s^2\k \big]
\notag\\
 &=
   \nabla_s^{m-2}\big(\L\nabla_s^2\k\big)
 - \nabla_s^{m-3}\big(\IP{\tau}{\L\nabla_s^2\k}\k\big)
 - \nabla_s^{m-2}\big(\IP{\tau}{\L\tau}\nabla_s^2\k\big)
\,.
\label{EQestfornablaMexp0}
\end{align}
Interchanging $\nabla_s$ with $\L$, we estimate
\begin{align}
\int_\gamma |\nabla_s^{m-2}(\L\nabla_s^2\k)|^2 ds
 &\le
   c\int_\gamma |\nabla_s^m\k|^2ds
 + c\int_\gamma |P_4^{2m-2,m-1}(\k)|\,ds
\notag\\
&\quad
 + c\int_\gamma |\PX|^2\big(1 + |\partial_s^{m+1}\gamma|^2\big)\, ds
\notag\\
&\le \frac{1}{42}\int_\gamma |\nabla_s^{m+2}\k|^2ds + c\,,
\label{EQestfornablaMVc2}
\end{align}
where we again employed the inductive hypothesis and the interpolation inequality \eqref{EQDKSinterpfork} with
[$\nu=4$, $\mu=2m-2$, $k=m+2$, $\sigma=\frac{2m-1}{m+2}<2$].
Keeping in mind $m\ge3$, for the second and third terms of \eqref{EQestfornablaMexp0} we estimate
\begin{align*}
\int_\gamma &\Big|\nabla_s^{m-3}\big(\IP{\tau}{\L\nabla_s^2\k}\k\big)\Big|^2ds
+ \int_\gamma \Big|\nabla_s^{m-2}\big(\IP{\tau}{\L\tau}\nabla_s^2\k\big)\Big|^2ds
\\
&\le
   c\int_\gamma |\nabla_s^m\k|^2ds
 + c\int_\gamma |\PX|^2\big(1 + |\partial_s^{m+1}\gamma|^2\big)\,ds
\\
&\le \frac{1}{42}\int_\gamma |\nabla_s^{m+2}\k|^2ds + c\,.
\end{align*}
Combined with \eqref{EQestfornablaMVc2} this gives the desired estimate for the first term on the right hand
side of \eqref{EQestfornablaMVc1}:
\begin{equation}
\label{EQestfornablaMVc3}
\int_\gamma \big|\nabla_s^{m+2}\cz\ts\big|^2ds
\le \frac{1}{42}\int_\gamma |\nabla_s^{m+2}\k|^2ds + c\,.
\end{equation}

We continue by estimating the second term in \eqref{EQestfornablaMVc1} with
\begin{align}
c\int_\gamma &\Big|\nabla_s^{m}\big[(\L^T(\k-\c))^\bot\big]\Big|^2ds
\notag\\
&=
  c\int_\gamma \Big|\nabla_s^{m}\big[\L^T(\k-\c) - \IP{(\L^T(\k-\c))}{\tau}\tau\big]\Big|^2ds
\notag\\
&=
  c\int_\gamma \Big|\nabla_s^{m}\big[\L^T\k - \IP{\L^T\k\L}{\tau}\tau\big]         
                  + \nabla_s^{m}\big[-\L^T\c + \IP{\L^T\c}{\tau}\tau\big]\Big|^2ds
\notag\\
&\le
  c\int_\gamma \Big|\nabla_s^{m}\big[\L^T\k - \IP{\L^T\k}{\tau}\tau\big]\Big|^2ds
+ c\int_\gamma |\PX|^2\big(1+ |\partial_s^{m+1}\gamma|^2\big)\,ds
\notag\\
&\le
  c\int_\gamma \Big|\nabla_s^{m}\big[\L^T\k\big]\Big|^2ds
+ c\int_\gamma \Big|\nabla_s^{m-1}\big[\IP{\L^T\k}{\tau}\k\big]\Big|^2ds
+ c
\notag\\
&\le
  c\int_\gamma |\nabla_s^{m}\k|^2ds
+ c\int_\gamma \big|P_2^{2m-2,m-1}(\k)\big|^2ds
+ c\int_\gamma |\PX|^2\big(1 + |\partial_s^{m+1}\gamma|^2\big)\,ds
+ c
\notag\\
&\le
  \frac1{42}\int_\gamma |\nabla_s^{m+2}\k|^2ds + c\,.
\label{EQestfornablaMVc4}
\end{align}
In obtaining the last inequality we used \eqref{EQDKSinterpfork} to estimate the second term $\int_\gamma
\big|P_2^{2m-2,m-1}(\k)\big|^2ds$ with 
[$\nu=2$, $\mu=2m-2$, $k=m$, $\sigma=\frac{2m-2}{m}<2$].
Similarly, the third term in \eqref{EQestfornablaMVc1} is estimated by
\begin{align}
c\int_\gamma \Big|\nabla_s^{m}\big[|\c|^2\k\big]\Big|^2ds
&\le
c\int_\gamma |\nabla_s^{m}\k|^2ds
+ c\int_\gamma |\PX|^2\big(1 + |\partial_s^{m+1}\gamma|^2\big)\,ds
\notag\\
&\le
  \frac1{42}\int_\gamma |\nabla_s^{m+2}\k|^2ds + c\,.
\label{EQestfornablaMVc5}
\end{align}
The fourth term in \eqref{EQestfornablaMVc1} is one order higher than the others.
Nevertheless, we may estimate it in an analogous manner:
\begin{align}
c\int_\gamma &\Big|\nabla_s^{m}\big[\IP{\c}{\tau}\nabla_s\k\big]\Big|^2ds
\notag\\
&=
  c\int_\gamma \big|\nabla_s^{m-1}[
                 \IP{\c}{\tau}\nabla_s^2\k
               + \IP{\L\tau}{\tau}\nabla_s\k
               + \df(\tau)\nabla_s\k
               + \IP{\c}{\k}\nabla_s\k
               ]\big|^2ds
\notag\\
&\le
  c\int_\gamma \Big|\nabla_s^{m-2}\big[
                 \IP{\c}{\tau}\nabla_s^3\k
               + 2\df(\tau)\nabla_s^2\k
               + 2\IP{\L\tau}{\tau}\nabla_s^2\k
               + 2\IP{\c}{\k}\nabla_s^2\k
\notag\\
&\qquad\qquad\qquad
               + \df(\k)\nabla_s\k
               + \dkf{2}(\tau,\tau)\nabla_s\k
               + 2\IP{\L\tau}{\k}\nabla_s\k
               + \IP{\L\k}{\tau}\nabla_s\k
\notag\\
&\qquad\qquad\qquad
               + \IP{\c}{\partial_s\k}\nabla_s\k
               + \fg\nabla_s\k
               \big]\Big|^2ds
\notag\\
&\le
  c\int_\gamma \Big|\nabla_s^{m-2}\big[
                 \IP{\c}{\tau}\nabla_s^3\k
               + 2\df(\tau)\nabla_s^2\k
               + 2\IP{\L\tau}{\tau}\nabla_s^2\k
               + 2\IP{\c}{\k}\nabla_s^2\k
               \big]\Big|^2ds
\notag\\*
&\quad
+ c\int_\gamma |\PX|^2\big(1 + |\partial_s^{m+1}\gamma|^2\big)\,ds
\notag\\
&\le
  c\int_\gamma \Big|\nabla_s^{m-3}\big[
                 \IP{\c}{\tau}\nabla_s^4\k
               + 3\df(\tau)\nabla_s^3\k
               + 3\IP{\L\tau}{\tau}\nabla_s^3\k
               + 3\IP{\c}{\k}\nabla_s^3\k
               \big]\Big|^2ds
\notag\\*
&\quad
+ c\int_\gamma |\PX|^2\big(1 + |\partial_s^{m+1}\gamma|^2\big)\,ds
\notag\\
&\le
  c\int_\gamma \Big|\nabla_s^{m-3}\big[
                 \IP{\c}{\tau}\nabla_s^4\k
               \big]\Big|^2ds
 + c\int_\gamma |\nabla_s^m\k|^2ds
\notag\\*
&\quad
 + c\int_\gamma |\PX|^2\big(1 + |\partial_s^{m+1}\gamma|^2\big)\,ds
\notag\\
&\le
  c\int_\gamma |\nabla_s^{m+1}\k|^2ds
 + c\int_\gamma |\nabla_s^m\k|^2ds
 + c\int_\gamma |\PX|^2\big(1 + |\partial_s^{m+1}\gamma|^2\big)\,ds
\notag\\
&\le
  \frac1{42}\int_\gamma |\nabla_s^{m+2}\k|^2ds
 + c\,.
\label{EQestfornablaMVc6}
\end{align}
For the last inequality we used \eqref{EQDKSinterpfork} with
[$\nu=2$, $\mu=2m+2$, $k=m+2$, $\sigma = \frac{2m+2}{m+2}<2$] and
[$\nu=2$, $\mu=2m$, $k=m+2$, $\sigma = \frac{2m}{m+2}<2$].

The remaining seven terms in \eqref{EQestfornablaMVc1} are estimated in a similar manner, using a combination
of the methods used for the first four terms.
Briefly,
\begin{align}
  c\int_\gamma &\Big|\nabla_s^{m}\big[\IP{\c}{\k}\k\big]\Big|^2ds
+ c\int_\gamma \Big|\nabla_s^{m}\big[\IP{\L\tau}{\tau}\k\big]\Big|^2ds
\notag\\
+\, c\int_\gamma &\Big|\nabla_s^{m}\big[\fg\IP{\cz}{\tau}\k\big]\Big|^2ds
+ c\int_\gamma \Big|\nabla_s^{m}\big[\df(\tau)\,\k\big]\Big|^2ds
\notag\\
&\le
 c\int_\gamma |\nabla_s^{m}\k|^2ds
+ c\int_\gamma |\PX|^2\big(1 + |\partial_s^{m+1}\gamma|^2\big)\,ds
\notag\\
&\le
  \frac1{42}\int_\gamma |\nabla_s^{m+2}\k|^2ds + c\,,\ \text{and}
\notag\\
c\int_\gamma &\Big|\nabla_s^{m}\big[\df(\tau)\,\cz\ts\big]^\bot\Big|^2ds
+ c\int_\gamma \Big|\nabla_s^{m}\big[\fg\L\tau\big]^\bot\Big|^2ds
+ c\int_\gamma \Big|\nabla_s^{m}\big[\IP{\c}{\tau}\df\big]^\bot\Big|^2ds
\notag\\
&\le
  c\int_\gamma |\PX|^2\big(1 + |\partial_s^{m+1}\gamma|^2\big)\,ds
 \le
  c\,.
\label{EQestfornablaMVc7}
\end{align}

Inserting the estimates \eqref{EQestfornablaMVc3}, \eqref{EQestfornablaMVc4}, \eqref{EQestfornablaMVc5},
\eqref{EQestfornablaMVc6}, \eqref{EQestfornablaMVc7} into \eqref{EQestfornablaMVc1} yields
\begin{align*}
\int_\gamma |\nabla_s^{m}V_c|^2ds
&\le 
  \frac1{4}\int_\gamma |\nabla_s^{m+2}\k|^2ds
 + c\,,
\end{align*}
which we insert into \eqref{EQestfornablaMmaineq} to find
\begin{equation*}
\rd{}{t}\int_\gamma |\nabla_s^m\k|^2ds
+ \int_\gamma |\nabla_s^{m+2}\k|^2ds
 \le
  c\,,
\end{equation*}
which, upon combination with \eqref{EQsimpleinterp} for $p=2$, $\varepsilon=1$, yields
\begin{equation*}
\rd{}{t}\int_\gamma |\nabla_s^m\k|^2ds
 \le
  c
- \int_\gamma |\nabla_s^{m}\k|^2ds\,.
\end{equation*}
The above estimate, by an argument identical to that for the cases $m=1$ and $m=2$ treated earlier, yields the bound
\eqref{EQhigherorderboundsCNCLSN} for all $m$.
\end{proof}


\begin{proof}[Proof of Theorem \ref{TMmt}]
Given our previous estimates (in particular Lemma \ref{LMhigherorderbounds}), this follows in a manner similar
to that of \cite[Theorem 3.2]{DKS02}.
For the convenience of the reader we reproduce the argument with the necessary modifications here.

Let us first note that Lemma \ref{LMaprioricontrolofL} gives uniform upper and lower bounds of $L(\gamma)$
and combining Lemma \ref{LMhigherorderbounds} with \cite[Lemma 2.7]{DKS02} gives $L^\infty$ control of all
derivatives of curvature; summarising, we have
\begin{equation}
 \frac{2\pi^2}{\SH(\gamma_0)}
\ \le\ L(\gamma)\ \le\ \frac{1}{\lambda}\SH(\gamma_0)\,,\quad\text{and}\quad
\vn{\partial_s^{m+2}\gamma}_{L^\infty} \le c(m)\,,
\label{EQbounds}
\end{equation}
where $m$ is a non-negative integer and $c(m)$ is a constant depending only on $m$, $\SH(\gamma_0)$,
$\lambda$, $n$, $|\L|$, and $c_i$ for $i = 0,\ldots,m+1$.

Suppose $T<\infty$. Our goal is to convert the bounds \eqref{EQbounds} to the following:
\begin{equation}
\vn{\partial_u^{k}\gamma}_{L^\infty} \le \tilde{c}(k)\,,
\label{EQdesiredbounds}
\end{equation}
where $u$ is the parameter from the original parametrisation of $\gamma$ (recall that we reparametrised by
arclength via $s(u) = \int_0^u |\partial_u\gamma|\,du$) and $k$ is a non-negative integer.  We wish to prove
\eqref{EQdesiredbounds} for $t\in(0,T)$ with constants $\tilde{c}(k)$ depending only on $k$, $\SH(\gamma_0)$,
$\lambda$, $n$, $|\L|$, $T$ and $c_i$ for $i = 0,\ldots,k+1$.
This will imply that we can extend $\gamma$ smoothly to $T$, and beyond by short time existence, contradicting
the finite maximality of $T$.

To begin, note that $ds = |\partial_u\gamma|\,du$ satisfies $\partial_t\,ds = -\IP{\k}{V}
|\partial_u\gamma|\,du$, so that in combination with \eqref{EQbounds} and Lemmas \ref{LMaprioricontrolofC0},
\ref{LMaprioricontrolofC}, we have
\begin{equation}
\frac1c \le |\partial_u\gamma| \le c\,,
\label{EQinduction0}
\end{equation}
with $c$ depending only on $\SH(\gamma_0)$, $\lambda$, $n$, $|\L|$, $T$, and $c_i$ for $i = 0,\ldots,3$.
Let $h:\S^1\rightarrow\R$ be a smooth function.  The general interchange formula
\begin{equation*}
\partial_u^kh - |\partial_u\gamma|^k\partial_s^kh
 = P(|\partial_u\gamma| ; \cdots ; \partial_u^{k-1}|\partial_u\gamma|
     ; h ; \cdots ; \partial_s^{k-1}h)
\end{equation*}
with $h = \IP{\k}{V}$ implies
\begin{align}
\vn{\partial_u^k\IP{\k}{V}}_{L^\infty}
 &\le |\partial_u\gamma|^kP(\cz;\fg;\partial_s\gamma;\cdots;\partial_s^{k+3}\gamma)
\notag\\
&\quad + P(|\partial_u\gamma| ; \cdots ; \partial_u^{k-1}|\partial_u\gamma|
     ; \cz;\fg;\partial_s\gamma;\cdots;\partial_s^{k+2}\gamma)\,.
\label{EQinterchange1}
\end{align}
Let us prove
\begin{equation}
\vn{(\partial_u^{k}|\partial_u\gamma|)}_{L^\infty} \le \tilde{c}(k)\,,
\label{EQinduction1}
\end{equation}
where $k$ is a non-negative integer, by induction.
Estimate \eqref{EQinduction0} clearly implies \eqref{EQinduction1} for $k=0$, and assuming
\eqref{EQinduction1} holds for $k=0,\ldots,p-1$, we combine \eqref{EQbounds} with \eqref{EQinterchange1} and the
evolution of $|\partial_u\gamma|$ to obtain
\begin{align*}
\partial_t(\partial_u^p|\partial_u\gamma|) + \IP{\k}{V}(\partial_u^p|\partial_u\gamma|)
 &= P\big(\IP{\k}{V};\cdots;\partial_u^{p}\IP{\k}{V} ;
     |\partial_u\gamma| ; \cdots ; \partial_u^{p-1}|\partial_u\gamma|\big)
\\
 &\le c\,,
\end{align*}
which by a simple Gronwall argument implies \eqref{EQinduction1} for $k=p$.  We have thus proven
\eqref{EQinduction1} for all $k$.

To see that \eqref{EQinduction1} implies \eqref{EQbounds}, note that
$|\partial_s\gamma| = 1$,
\begin{align*}
\partial_u^k\gamma
 &= \partial_u^{k-1}\big(|\partial_u\gamma|\partial_s\gamma\big)
 = P(\partial_s\gamma ; \cdots ; \partial_s^k\gamma ; |\partial_u\gamma| ; \cdots ;
\partial_u^{p-1}|\partial_u\gamma|\big)\,,
\end{align*}
and use \eqref{EQbounds}.
This finishes the proof of \eqref{EQdesiredbounds} for $k\ge1$.
For $k=0$ we combine \eqref{EQflow} with \eqref{EQbounds} to obtain ($\{e_i\}_{i=1}^n$ is the standard basis
of $\R^n$)
\begin{equation*}
\IP{\gamma}{e_i} = \int_0^t \IP{V}{e_i}\,dt \le \tilde{c}(2)\,t\,,
\end{equation*}
which clearly implies \eqref{EQdesiredbounds} for $k=0$.

We have therefore shown $T=\infty$.
In order to obtain the convergence statement, first note that the estimates provided by Lemma
\ref{LMhigherorderbounds} are uniform, and so do not degenerate as $t\rightarrow\infty$.
These estimates are in the arc-length parametrisation of $\gamma$ however, and the estimates obtained in the
contradiction argument above are not uniform in time.
So, let us reparametrise $\gamma$ at each time such that it remains parametrised by arc-length.
By the estimates \eqref{EQbounds} we have every derivative of $\gamma$ bounded a-priori.
Lemma \ref{LMaprioricontrolofL} implies that 
Composing the flow with a sequence of translations $p_j\in\R^n$ ($p_j = \gamma(0,t_j)$ is an
allowable choice) allows us to bound the length of $\gamma$.
We therefore conclude that there exists a sequence of times $t_j\rightarrow\infty$ such that the
subsequence $\gamma_{t_j}-p_j$ converges as $j\rightarrow\infty$ to a smooth limit curve
$\gamma_\infty$.
Now by Lemma \ref{LMfirstvariation}, we have that
\begin{equation*}
\int_0^\infty\int_\gamma |\BH(\gamma)|^2 ds\,dt = \SH(\gamma_0) - \SH(\gamma_\infty) \le \SH(\gamma_0)\,,
\end{equation*}
which shows that $\vn{\BH(\gamma)}_2^2 \in L^1([0,\infty))$.
The bounds \eqref{EQbounds} imply that $\partial_t \vn{\BH(\gamma)}_2^2$ is uniformly bounded.
Therefore up to the choice of another subsequence (which we also denote by $t_j$) we have
$\BH(\gamma_{t_j})\rightarrow 0$ as $j\rightarrow\infty$.
Lemma \ref{LMaprioricontrolofC0} implies that up to yet another choice of subsequence (again denoted by $t_j$)
the limit $\vec{c}_\infty = \lim_{j\rightarrow\infty} \big(\ca\circ\gamma(\cdot,t_j) +
(f\circ\gamma(\cdot,t_j))\tau(\cdot)\big)$ exists.
Therefore the functional $\SHlim$ and its Euler-Lagrange operator $\BHlim$ exist, and by the above argument
the limiting curve $\gamma_\infty(s) = \lim_{j\rightarrow\infty}\big(\gamma(s,t_j)-p_j\big)$ satisfies
$\BHlim(\gamma_\infty) = 0$.
\end{proof}

\begin{proof}[Proof of Theorem \ref{TMmt2}]
In either of the cases where $\L$ is invertible and non-vanishing, or the pair $(f,\gamma_0)$ satisfy Assumption
\ref{ASfproperlike}, we are able to bound $\gamma$ uniformly in $L^\infty$ (see Lemmas
\ref{LMaprioricontrolofGamma} and \ref{LMaprioricontrolofGammaf}), thus restricting the flow to a ball
$B_\rho(0)\subset\R^n$, and removing the need for the translations $p_j$.
We additionally recover convergence of the full sequence in this case, as the following basic argument shows.
Let $T^* \in (0,\infty)$.  Suppose a pair of subsequences $\gamma_{t_j}$, $t_j \rightarrow T^*$, and
$\gamma_{s_j}$, $s_j \rightarrow T^*$, converge to distinct limits $\hat\gamma$ and $\tilde\gamma$.  Then 
for some $e_k$ in the standard basis of $\R^n$ we have $\IP{\hat\gamma}{e_k} \ne \IP{\tilde\gamma}{e_k}$.
This is in contradiction with
\begin{equation*}
\Big|\IP{\gamma_{t_j}}{e_k} - \IP{\gamma_{s_j}}{e_k}\Big|
 = \Big|\int_{s_j}^{t_j} \partial_t\IP{\gamma}{e_k}\,dt \Big|
 \le c|t_j - s_j|\,.
\end{equation*}
Therefore the full sequence $\gamma_t$ converges on $(0,P)$ for each $P$, and taking $P\rightarrow\infty$
gives the convergence result on $(0,\infty)$.  Carrying out the argument for each of the derivatives of
$\gamma$ shows that this gives smooth convergence.

Let us consider now the case where $f$ and $\ca$ are constant.
Setting $p(t) = \gamma(0,t)$, recall than an allowable choice for the sequence of translations $p_j$ giving
the earlier subconvergence is $p_j = p(t_j)$, since then Lemma \ref{LMaprioricontrolofL} would give $|\gamma|
\le 2L(\gamma) \le 2\lambda^{-1}\SH(\gamma_0)$.
Denote by $N$ the normal bundle over $\gamma_\infty$, which has as elements of the fibre at $s$
vectors in $\R^n$ which are normal to $\gamma_\infty$ at $\gamma_\infty(s)$.
Any curve close to $\gamma_\infty$ in $C^m$ ($m$ large enough) can be written uniquely as graphs
over $\gamma_\infty$.
This gives us a chart for the space of curves near $\gamma_\infty$, which takes a neighbourhood $U$
about the origin in the space of $C^m$ sections of $N$ to a neighbourhood $Q$ of $\gamma_\infty$ in
the space of $C^m$ curves, given by
\[
g\in\Gamma(N)\mapsto \{\gamma_g(s) = \gamma_\infty(s)+g(s)\}\,.
\]
Intersecting this with $C^m$ gives a correspondence with a neighbourhoood of $\gamma_\infty$ in the
space of $C^m$ curves.
Furthermore, there is a constant $C$ such that the arc-length parameter on a curve $\gamma_g$ for
$g\in U$ is equivalent to the arc-length parameter on $\gamma_\infty$:
\[
\frac1C\,ds
 \le ds_g
 = \sqrt{(1+\IP{\k}{g})^2 + |dg|^2}\,ds
 \le C\,ds\,.
\]
Let us use $ds$ to denote the arc-length element along $\gamma_\infty$.
On the set $Q$, composition with this chart makes the energy $\SH$ into an analytic functional $\SJ$
on $V$ (assuming $m$ is large enough).
Our flow is not the $L^2(ds)$-gradient flow of $\SJ$, but we can control the extent to which it
fails to be: The angle between the flow of $g$ and the negative gradient vector of $\SJ$ is bounded
away from $\pi/2$.
We have
\begin{align}
\frac{d}{dt}\SJ(g+t\phi)\Big|_{t=0}
 &= \frac{d}{dt}\SH(\gamma_{g+t\phi})\Big|_{t=0}
\notag\\
 &= \IP{\BH(\gamma_g)}{\phi}_{L^2(ds_g)}
\label{EQgfgp}\\
 &= \IP{\BH(\gamma_g)\sqrt{(1+\IP{\k}{g})^2 + |dg|^2}}{\phi}_{L^2(ds)}\,.
\notag
\end{align}
The gradient vector $\BJ$ at $g$ is a section of $N$, given by
\[
\BJ(g)(s) = \pi_1\BH(\gamma_g)(s)\sqrt{(1+\IP{\k}{g})^2 + |dg|^2}\,,
\]
where $\pi_1: N_{\gamma_g}(s)\rightarrow N_{\gamma_\infty}(s)$ is the orthogonal projection onto the
normal space $N_{\gamma_\infty}(s)$.  Since $\SH$ is invariant under reparametrisation, we know that
$\BH(\gamma_g)$ is a section of the
normal bundle of $\gamma_g$.
The gradient flow in the graphical parametrisation (c.f. \eqref{EQgfgp}) is given by
\[
\partial_tg = -\pi_1\BH(\gamma_g)
\,.
\]
The angle between the normal spaces $N_{\gamma_g}(s)$ and $N_{\gamma_\infty}(s)$ is well-controlled,
and it follows that the $L^2(ds)$ norms of both $\partial_tg$ and $\BJ(g)$ are comparable to the
$L^2(ds)$ norm of $\BH(\gamma_g)$.
It follows that for $m$ sufficiently large (i.e., for $j$ sufficiently large) we have that the angle
between $\partial_tg$ and $\BJ(g)$ is bounded away from $\pi/2$.
That is, there exists a $c_0>0$ such that
\[
\IP{\partial_tg}{\BJ(g)}_{L^2(ds)}
\ge c_0\vn{\partial_tg}_{L^2(ds)}\vn{\BJ(g)}_{L^2(ds)}\,.
\]

Keeping this in mind, we now follow an idea of Simon \cite{S83} for the functional $\SJ$.
As we are evolving by a gradient flow of an analytic functional, we may employ the Lyapunov-Schmidt reduction
and the classical Lojasiewicz inequality (see \cite[Proof of Theorem 3]{S83}) to obtain that in a
neighbourhood $U$ of $\gamma_\infty$ in $C^{k,\alpha}$ for some large $k$ that
\begin{equation*}
\vn{\BJ(g(\cdot,t))}_2 \ge |\SJ(g(\cdot,t)) - \SJ(g_\infty)|^\alpha
\end{equation*}
for some $\alpha<1$.
Note of course that $\SJ(g_\infty) = \SHlim(\gamma_\infty)$ and we identify $g_\infty =
\gamma_\infty$.
In particular, while the solution $g(\cdot,t)$ remains in $U$ we have
\begin{align*}
\frac{d}{d t}&(\SJ(g(\cdot,t)-\SJ(g_\infty))^{1-\alpha}
\\
  &= -(1-\alpha)(\SJ(g(\cdot,t)-\SJ(g_\infty))^{-\alpha}
      \IP{\partial_tg}{\BJ(g)}_{L^2(ds_g)}
\\
  &\leq -(1-\alpha)
      \IP{\partial_tg}{\frac{\BJ(g)}{\vn{\BJ(g(\cdot,t))}_{L^2(ds_g)}}}_{L^2(ds_g)}
\\
  &\leq -c(1-\alpha)\vn{\partial_t g(\cdot,t)}_{L^2(ds_g)}\,,
\end{align*}
from which it follows (see \cite[Proof of Lemma 1]{S83}) that
\begin{equation}
\label{EQconv}
\vn{g(\cdot,t_j+t)-g(\cdot,t_j)}_2
  \leq \frac{c}{1-\alpha}|\SJ(g(\cdot,t_j))-\SJ(g_\infty)|^{1-\alpha}
\end{equation}
as long as $g(\cdot,t_j+t)$ remains within $U$.
Now the translation invariance allows us to enact the above argument completely analogously for
a neighbourhood $U+p_j$ of $g_\infty + p_j = \gamma_\infty+p_j$, and so the estimate \eqref{EQconv}
holds so long as $g$ remains within any of the translated neighbourhoods $U+p_j$.
The right hand side of \eqref{EQconv} can be made as small as desired by choosing $j$ large.
It follows that for $j$ sufficiently large we have $g(\cdot,t_j+t) \in U + p_j$ for all $t>0$.
It follows in particular that the solution does not escape to infinity, and converges to one of the translated
stationary solutions.
Since the flow $g$ is $\gamma$ in the graphical parametrisation, the same conclusion holds for
$\gamma$.

This proves the full convergence of the flow in each of the cases $(i)$--$(iii)$.
The criticality of the limit is an obvious consequence of the argument used to obtain the
criticality of $\gamma_\infty$ in the proof of Theorem \ref{TMmt}.
Note that the limiting functional is unique and there is no need to take any subsequences.
\end{proof}

%

\appendix
\section*{Appendix}

\begin{proof}[Proof of Lemma \ref{LMcirclesclassification}]
We shall determine for which radii $S_\rho$ is critical for $\SH$ in the cases where $f = \text{const.}$
and
\begin{itemize}
\item $\ca$ is a translation, or
\item $\ca$ is a rotation satisfying $\ca(S_\rho(\S^1)) = S_\rho(\S^1)$.
\end{itemize}
We write $\cz = \ca S_\rho = \L S_\rho + \M$ where $\L$ is a matrix of rotation through an angle of
$\theta$ about an axis $e_i$ in $\R^n$ with $\L S_\rho(\S^1) = S_\rho(\S^1)$.
Any circle centred at the origin satisfies $\k = -\rho^{-2}S_\rho$, and so $\nabla_s^m\k = 0$ for all $m\ge1$.
We begin by computing
\begin{align*}
\BH(S_\rho)
&=
  - \L\k
  + \IP{\L\k}{\tau}\tau
  + \frac12|\k|^2\k
  - \IP{\cz}{\k}\k
  - \lambda\k
  - \frac12|\cz+f\tau|^2\k
\\&\quad
  - \L^T(\k-\cz - f\tau)
  - f\L\tau
  + \IP{\k}{\L\tau}\tau
  - \IP{\cz}{\L\tau}\tau
  + f\IP{\cz}{\tau}\k
\\
&=
  - \L\k
  - (\L)^T\k
  + \IP{\L\k}{\tau}\tau
  + \frac12|\k|^2\k
  - \lambda\k
  - \IP{\cz}{\k}\k
\\&\quad
  - \frac12|\cz\ts|^2\k
  - \frac12|f|^2\k
  + (\L)^T\cz
  + f(\L)^T\tau
  - f\L\tau
  + \IP{\k}{\L\tau}\tau
  - \IP{\cz}{\L\tau}\tau
  \,.
\end{align*} 
In the first case we have $\L = 0$.
This implies
\begin{align*}
\BH(S_\rho)
 &= 
    \frac12|\k|^2\k
  - \IP{\M}{\k}\k
  - \lambda\k
  - \frac12|\M|^2\k
  - \frac12|f|^2\k
\\
 &= 
   \frac12\k\,\Big(|\k|^2
  - 2\IP{\M}{\k}
  - 2\lambda
  - |\M|^2
  - |f|^2\Big)
  \,,
\end{align*}
which vanishes only if $\M = 0$.
Indeed one finds that the solution must be the circle $S_\rho$ with radius satisfying
\begin{equation*}
\frac1\rho = \sqrt{2\lambda + |f|^2}\,.
\end{equation*}
Now let us consider the second case where $\M = 0$, which gives
\begin{align*}
\IP{\BH(S_\rho)}{\k}
  &=
  - 2\IP{\k}{\L\k}
  + \frac12|\k|^4
  - \lambda|\k|^2
  - \IP{\cz}{\k}|\k|^2
  - \frac12|\cz\ts|^2|\k|^2
\\&\quad
  - \frac12|f|^2|\k|^2
  + \IP{\cz}{\L\k}
  + f\IP{\tau}{\L\k}
  - f\IP{\k}{\L\tau}
  \,.
\end{align*}
There are two possibilities.  Either $\L$ is a rotation of the plane $\text{span}\{e_1,e_2\}$ leaving the
$(n-2)$ subspace $\text{span}\{e_3,\ldots,e_n\}$ invariant, or $\L$ is a rotation in a plane
$\text{span}\{e_i,e_j\}$, $i\ne 1,2$, $j\ne 1,2$, $i\ne j$, which leaves the plane $\text{span}\{e_1,e_2\}$
invariant.
Let us consider the second case first, whence the above equation becomes
\begin{align*}
\IP{\BH(S_\rho)}{\k}
  &=
  - 2|\k|^2
  + \frac12|\k|^4
  - \lambda|\k|^2
  + \rho^2|\k|^4
  - \frac12\rho^4|\k|^4
  - \frac12|f|^2|\k|^2
  - \rho^2|\k|^2
\\
&=
 \frac12|\k|^4\Big(
  - 3\rho^4
  - \rho^2\big(2
   + 2\lambda
   + |f|^2\big)
  + 1
 \Big)
  \,.
\end{align*}
One therefore finds that the any critical circle must have radius
\begin{equation}
\label{EQrho1}
\rho = \sqrt{\frac{\sqrt{(2 + 2\lambda + |f|^2)^2 +12}-(2 + 2\lambda + |f|^2)}{6}}\,,
\end{equation}
which does indeed satisfy $\BH(S_\rho) = 0$.
Finally let us consider the case where $\L$ is a rotation of the plane $\text{span}\{e_1,e_2\}$.
If the rotation is not about the origin, then the invariance condition implies $\theta = 2k\pi$ for $k\in\Z$;
that is, the rotation is trivial, and one obtains the circle with radius given by \eqref{EQrho1} as the only
possible solution.
Finally, if the rotation is about the origin, a necessary condition for $S_\rho$ to be critical is
\begin{align}
0 = \IP{\BH(S_\rho)}{\k}
  &=
  - 2\rho^{-2}\cos\theta
  + \frac12\rho^{-4}
  - \lambda\rho^{-2}
  + \rho^2\IP{\L\k}{\k}|\k|^2
  - f\IP{\k}{\L\tau}
\notag\\&\quad
  - \frac12\rho^4|\L\k|^2|\k|^2
  - \frac12|f|^2|\k|^2
  + \IP{-\rho^2\L\k}{\L\k}
  + f\IP{\tau}{\L\k}
\notag\\&=
    \frac12\rho^{-4}
  + \Big(
   - \cos\theta
   - \lambda
   - \frac12|f|^2
    \Big)\rho^{-2}
  - \frac32
\notag\\\notag&\quad
  + f\rho^{-1}\Big( \cos\big(\text{$\textstyle \frac\pi2$} + \theta)
                  - \cos(\text{$\textstyle \frac\pi2$}-\theta)\Big)\,,\intertext{that is}
0 &=
    \frac12\rho^{-4}
  + \Big(
   - \cos\theta
   - \lambda
   - \frac12|f|^2
    \Big)\rho^{-2}
  - 2f\rho^{-1}\sin\theta
  - \frac32
\,.
\label{EQpolyroot}
\end{align}
Therefore if $S_\rho$ is to be critical for $\SH$ its radius must be a real positive root of the polynomial
\eqref{EQpolyroot}.
\end{proof}


\bibliographystyle{plain}
\bibliography{Wh12}

\end{document}